\newcommand{\lin}{\operatorname{span}}
\newcommand{\Lip}{\operatorname{Lip}}
\newcommand{\tr}{\operatorname{tr}}
\renewcommand{\div}{\operatorname{div}}
\newcommand{\hook}{\hookrightarrow}
\newcommand{\harp}{\rightharpoonup}
\newcommand{\grad}{\nabla}
\newcommand{\laplace}{\Delta}
\newcommand{\Rb}{\mathbb{R}}
\newcommand{\Nb}{\mathbb{N}}
\newcommand{\Eb}{\mathbb{E}}
\newcommand{\Pb}{\mathbb{P}}
\newcommand{\Fb}{\mathbb{F}}
\newcommand{\Fc}{\mathcal{F}}
\newcommand{\Fct}{\left( \mathcal{F}_t \right)_{t \geq 0}}
\newcommand{\Vc}{\mathcal{V}}
\newcommand{\Hc}{\mathcal{H}}
\newcommand{\Ac}{\mathcal{A}}
\newcommand{\Rc}{\mathcal{R}}
\newcommand{\Kc}{\mathcal{K}}
\newcommand{\Uc}{\mathcal{U}}
\newcommand{\Xc}{\mathcal{X}}
\newcommand{\Tc}{\mathcal{T}}
\newcommand{\Sc}{\mathcal{S}}
\newcommand{\Mc}{\mathcal{M}}
\newcommand{\Ho}{\overline{H}}
\newcommand{\Vo}{\overline{V}}
\newcommand{\Vtwo}{V_{(2)}}
\newcommand{\vb}{\bar{v}}
\newcommand{\vt}{\tilde{v}}
\newcommand{\Apr}{A_{\rm{pr}}}
\newcommand{\bun}{\bar{U}^n}
\newcommand{\una}{U^n_\ast}
\newcommand{\uone}{U^{(1)}}
\newcommand{\utwo}{U^{(2)}}
\providecommand{\keywords}[1]
{
	{\small
  	\textbf{\textit{Keywords---}} #1
  	}
}
\providecommand{\msc}[1]
{
	{\small
  	\textbf{\textit{MSC2010---}} #1
  	}
}
\newtheorem{theorem}{Theorem}[section]
\newtheorem{lemma}[theorem]{Lemma}
\newtheorem{corollary}[theorem]{Corollary}
\newtheorem{proposition}[theorem]{Proposition}
\newtheorem{definition}[theorem]{Definition}
\theoremstyle{definition}
\newtheorem{example}{Example}
\theoremstyle{remark}
\numberwithin{equation}{section}
\title{Well-Posedness of the 3D Stochastic Primitive Equations with Transport Noise}
\author[1]{Zdzis{\l}aw Brze\'{z}niak\footnote{Corresponding author; email: \href{mailto:zdzislaw.brzezniak@york.ac.uk}{zdzislaw.brzezniak@york.ac.uk}.}}
\author[2]{Jakub Slav\'{i}k\footnote{email: \href{mailto:slavik@utia.cas.cz}{slavik@utia.cas.cz}}}
\affil[1]{University of York, Department of Mathematics \protect\\James College, Campus West, YO10 5DD, United Kingdom}
\affil[2]{The Czech Academy of Sciences, Institute of Information Theory and Automation\protect\\Pod~Vod\'{a}renskou v\v{e}\v{z}\'{i} 4, 182 00 Prague 8, Czechia}
\date{}
\begin{document}

\maketitle

\begin{abstract}
We show that that the stochastic 3D primitive equations with either the physical boundary conditions or Neumann boundary conditions on the top and bottom and Dirichlet boundary condition on the sides driven by multiplicative gradient-dependent white noise have unique maximal strong solutions both in stochastic and PDE sense under certain assumptions on the growth of the noise. For the latter boundary conditions global existence is established using an argument based on decomposition of vertical velocity to barotropic and baroclinic modes and an iterated stopping time argument. An explicit example of non-trivial infinite dimensional noise depending on the vertical average of the horizontal gradient of horizontal velocity is presented.
\end{abstract}

\keywords{stochastic PDEs, primitive equations, global well-posedness, transport noise}

\msc{35Q86, 60H15, 76M35, 86A05, 86A10}

\tableofcontents

\section{Introduction}

The primitive equations are one of the fundamental models in geophysical fluid dynamics. They can be derived from the Navier-Stokes equations using the hydrostatic approximation and the Boussinesq approximation, see e.g.\ \cite{pedlosky} or \cite{vallis}. In 3D, the primitive equations of the ocean perturbed by a multiplicative white noise read as
\begin{gather}
	\label{eq:pe.v.orig}
	\begin{multlined}
	\partial_t v + \left(v\cdot\grad\right)v + w\partial_z v + \tfrac{1}{\rho_0} \grad p + f \vec{k} \times v - \mu_v \laplace v - \nu_v \partial_{zz} v\\
	= F_v + \sigma_1\left(v, \grad_3 v, T, \grad_3 T \right) \dot{W}_1,
	\end{multlined}
	\\
	\partial_z p = -\rho g,\\
	\div v + \partial_z w = 0,\\
	\partial_t T + \left(v\cdot\grad\right)T + w\partial_z T - \mu_T \laplace T - \nu_T \partial_{zz} T = F_T + \sigma_2\left(v, \grad_3 v, T, \grad_3 T\right) \dot{W}_2,\\
	\label{eq:pe.rho.orig}
	\rho = \rho_0 \left(1 - \beta_T\left(T-T_r\right) \right),
\end{gather}
where $v = (v_1, v_2)$, $w$, $T$, $p$, $\rho$ are the horizontal velocity, vertical velocity, temperature, salinity, pressure and density, respectively, $f$ represents the Coriolis acceleration, $g$ is gravity and the coefficients $\mu_v$, $\nu_v$ and $\mu_T$, $\nu_T$ are the horizontal and vertical viscosity and heat diffusion coefficients, respectively. The system is driven by deterministic non-autonomous forces $F_v$, $F_T$ and stochastic terms with multiplicative white noise in time. The primitive equations usually contain salinity as well. However, it is omitted here since it does not introduce any additional mathematical problems.

Motivated by the papers \cite{brzezniak1991, brzezniak1992, Brz+Motyl-2013, capinski1991, capinski1993, mikulevicius2004, mikulevicius2005}, where stochastic Navier-Stokes equations with noise term depending on the gradient of velocity are considered, we aim to establish global well-posedness of the 3D stochastic primitive equations with a gradient-dependent noise. For physical justification of such noise terms see \cite{mikulevicius2001,mikulevicius2004} and the references therein. A similar result has been established in \cite{gao2012} for 2D stochastic primitive equations with only mild assumptions on the noise term.

A rigorous mathematical treatment of the primitive equations started in \cite{lions1992a, lions1992b} where global existence of weak solutions has been established. In 2007, the global existence of strong solutions was shown by \cite{cao2007,kobelkov2007, kukavica2007} with different boundary conditions and assumptions on the regularity of the domain. Recent results include global well-posedness in $L^p$ spaces \cite{hieber2016} and global existence of models with partial diffusivity and/or viscosity, see the review paper \cite{li2018} and a related result \cite{hussein2019}.

The existence theory for the 3D stochastic primitive is slightly less developed. In \cite{guo2009} global existence of pathwise (i.e.\ strong in stochastic sense) strong (in PDE sense) solutions of primitive equations driven by additive noise in the setting of \cite{cao2007} has been established. The existence of maximal pathwise strong solutions for primitive equations with so-called physical boundary conditions, that is the setting of e.g.\ \cite{kukavica2007}, driven by multiplicative noise has been established in \cite{debussche2011} with the assumptions
\begin{equation}
	\label{eq:sigma.debussche.local}
	\sigma \in \Lip\left(H, L_2\left(\Uc, H \right) \right) \cap \Lip\left(V, L_2\left(\Uc, V \right) \right) \cap \Lip\left(D(A), L_2\left(\Uc, D(A) \right) \right),
\end{equation}
where $\Uc$ is the reproducing kernel Hilbert space of the cylindrical Wiener process $W$ and $L_2(X, Y)$ and $\Lip(X, Y)$ are the spaces of all Hilbert-Schmidt operators and Lipschitz continuous mappings from $X$ to $Y$, respectively, for Hilbert spaces $X$ and $Y$. The spaces $H$ and $V$ are defined in Section \ref{sect:fspaces}. The maximal existence result has been later expanded to global well-posedness in \cite{debussche2012} with noise term $\sigma$ satisfying
\begin{equation}
	\label{eq:sigma.debussche.global}
	\sigma \in \Lip\left(H, L_2\left(\Uc, H\right) \right) \cap \Lip\left(V, L_2\left(\Uc, V \right) \right) \cap \Lip\left(V, L_2\left(\Uc, D(A) \right) \right).
\end{equation}
Clearly, the assumption \eqref{eq:sigma.debussche.global} excludes noise terms depending on gradients. Other results include existence of invariant measures \cite{glatt-holtz2014}, large deviations principle \cite{dong2017.1} and Markov selections result \cite{dong2017.2}, both under the assumption \eqref{eq:sigma.debussche.global}.

We present two results. First, we show local existence and uniqueness of maximal pathwise strong solutions to the stochastic 3D primitive equations \eqref{eq:pe.v.orig}-\eqref{eq:pe.rho.orig} for a general class of noise terms $\sigma$ assuming that the growth of $\sigma$ w.r.t.\ to the gradient is sufficiently small, see Theorem \ref{thm:maximal.existence} and Example \ref{example.1} in Section \ref{sect:main.results}. In particular using the notation above we require
\begin{equation}
	\label{eq:sigma.assumptions}
	\sigma \in \Lip \left( V, L_2\left(\Uc, H\right) \right) \cap \Lip \left( D(A), L_2\left(\Uc, V\right) \right)	
\end{equation}
with explicit control of some of the growth constants involved. This result holds both for the boundary conditions from \cite{debussche2012} and the physical boundary conditions from e.g.\ \cite{debussche2011}. Secondly, considering the boundary conditions from \cite{debussche2012} we establish global existence for a smaller class of noise terms still allowing the presence of e.g.\ vertical average of horizontal gradients of vertical velocity, see Theorem \ref{thm:global.existence} and Example \ref{example.2} in Section \ref{sect:main.results}. Similarly as above we require that $\sigma$ satisfies \eqref{eq:sigma.assumptions} together with explicit control of additional constants resulting from the decomposition technique of \cite{cao2007}. The maximal existence result is obtained by an adjustment of the technique from \cite{debussche2011}, while the global existence result relies on the decomposition method of \cite{cao2007} combined with an iterated version of the stopping time argument from \cite{debussche2012} and the stochastic Gronwall lemma from \cite{glatt-holtz2009}.

The maximal existence result directly improves the one in \cite{debussche2011} to a less regular noise terms and provides additional information about stochastic integrability of the solutions. Although the higher integrability is more or less a cosmetic improvement by itself, it is necessary for the global existence argument of \cite{cao2007} to go through. Since the argument closely follows the method from \cite{debussche2011}, we provide details mostly in the parts where the different regularity of the noise term plays any role and in parts omitted from \cite{debussche2011} where we believe that a more detailed exposition might be in order, in particular in the proof of the blow-up of maximal solutions.

The global existence result directly improves the well-posedness results for both the additive noise \cite{guo2009} and for the multiplicative noise \cite{debussche2012} to a less regular noise setting. A well-posedness result for stochastic 3D primitive equations with non-homogeneous physical boundary conditions and varying topography, which is the setting of \cite{kukavica2007} in the deterministic case, and possibly gradient dependent noise still remains an open problem. Global existence of martingale weak solutions in this setting has been established in \cite{glatt-holtz17}.

The proof of the existence of global solutions differs from the corresponding proofs for stochastic Navier-Stokes Equations, see \cite{Brz+Motyl-2013} and/or \cite{mikulevicius2005}. In those papers the global solutions are constructed directly without using local solutions. Moreover, we only use the classical Skorokhod Theorem \cite[Theorem 2.4]{dpz} without invoking a generalization to nonmetric spaces as it has been done in 
\cite{Brz+Motyl-2013}.

The rest of the paper is organized as follows: In Section 2 we recall the standard reformulation of the primitive equations, define the necessary function spaces, operators and the various notions of solutions. We also state the main results and provide explicit examples of admissible noise terms. In Section 3 we establish the maximal existence result. Section 4 consists of the proof of global well-posedness and necessary apriori estimates in the spirit of \cite{cao2007}. In the Appendix the reader can find a version of the It\^{o} Lemma used in Sections 3 and 4.

\section{Mathematical setting}

\subsection{Reformulation of the problem}

Let $\Mc_0 \subseteq \Rb^2$ be a bounded domain with $C^2$ boundary and let $\Mc = \Mc_0 \times (-h, 0)$ for $h > 0$ fixed. The boundary $\partial \Mc$ is partitioned into the the top part $\Gamma_i$, the lateral part $\Gamma_l$ and the bottom part $\Gamma_b$ defined respectively by
\[
	\Gamma_i = \overline{\Mc_0} \times \lbrace 0 \rbrace, \qquad \Gamma_l = \partial \Mc_0 \times (-h, 0), \qquad \Gamma_b = \overline{\Mc_0} \times \lbrace -h \rbrace.
\]
We emphasize that the operators $\div$, $\grad$ and $\laplace$ are acting only on the horizontal coordinates, i.e.\ for a sufficiently smooth function $v: \Mc \to \Rb^2$
\[
	\div v = \partial_x v_1 + \partial_y v_2, \qquad \grad v = \begin{pmatrix} \partial_x v_1 & \partial_y v_2\\ \partial_x v_2 & \partial_y v_2 \end{pmatrix}, \qquad \laplace v = \begin{pmatrix} \partial_{xx} v_1 + \partial_{yy} v_1\\ \partial_{xx} v_2 + \partial_{yy} v_2 \end{pmatrix}.
\]
The full gradient will be denoted by $\grad_3$. For simplicity of the notation we will sometimes use $\partial_1$ and $\partial_2$ instead of $\partial_x$ and $\partial_y$.

The primitive equations \eqref{eq:pe.v.orig}--\eqref{eq:pe.rho.orig} are supplemented by the initial conditions
\begin{equation*}
	v(0) = v_0, \qquad T(0) = T_0,
\end{equation*}
and the following boundary conditions
\begin{align*}
	\text{on} &\ \Gamma_i: & &\partial_z v = 0, \quad w = 0, \quad \nu \partial_z T + \alpha T = 0,\\
	\text{on} &\ \Gamma_l: & &v = 0, \quad w = 0,\quad \partial_{\vec{n}_H} T = 0,\\
	\text{on} &\ \Gamma_b: & &\partial_z v = 0, \quad w = 0, \quad \partial_z T = 0.
\end{align*}
where $\vec{n}_H \in \Rb^2$ is the horizontal part of the outer unit normal to $\partial \Mc$. The maximal existence result in Theorem \ref{thm:maximal.existence} will also hold for the physical boundary conditions, see Section \ref{sect:main.results}, since the problem can be formulated in the same abstract functional way.

Following a standard argument, see \cite[Section 2.1]{petcu2009}, we may reformulate equations \eqref{eq:pe.v.orig}--\eqref{eq:pe.rho.orig} as follows
\begin{gather}
	\label{eq:pe.v.reform}
	\begin{multlined}
		\partial_t v + \left(v\cdot\grad\right)v + w(v)\partial_z v + \tfrac{1}{\rho_0} \grad p_S - \beta_T g \grad \int_z^0 T \, dz' + f \vec{k} \times v\\
		- \mu_v \laplace v - \nu_v \partial_{zz} v = F_v + \sigma_1\left(v, \grad v, T, \grad T\right) \dot{W}_1,
	\end{multlined}\\
	\div \int_{-h}^0 v(x, y, z') \, dz' = 0,\\
	\label{eq:pe.T.reform}
	\partial_t T + \left(v\cdot\grad\right)T + w\partial_z T - \mu_T \laplace T - \nu_T \partial_{zz} T = F_T + \sigma_2\left(v, \grad v, T, \grad T\right) \dot{W}_2,
\end{gather}
where
\begin{gather}
	\label{eq:pe.w.definition}
	w(v)(x, y) = - \int_{-h}^z \div v(x, y, z') \, dz'\\
	\label{eq:pe.p.reform}
	p_S = p - P, \qquad P = P(T) = g \int_z^0 \rho \, dz',\\
	\label{eq:pe.rho.reform}
	\rho = \rho_0 \left(1 - \beta_T(T-T_r) \right),
\end{gather}
and $p_S$ is the surface pressure. Here we interpret $\vec{k} \times v = \vec{k} \times (v_1, v_2) = (-v_2, v_1)$.  A given number $T_r$ is  the reference value of the temperature.

We remark that in what follows we will be considering only the couple of \emph{prognostic} variables $U = (v, T)$ since the remaining \emph{diagnostic} variables $w$, $p$, $\rho$ can be inferred from the prognostic variables using the equations \eqref{eq:pe.w.definition}, \eqref{eq:pe.p.reform} and \eqref{eq:pe.rho.reform} and the hydrostatic Helmholtz-Leray projection $P_H$ defined below. We will sometimes write $U = (U_1, U_2)$ instead of $U = (v, T)$. Also, for simplicity, we will from now on assume that $\mu_v = \mu_t \equiv \mu$ and $\nu_T = \nu_v \equiv \nu$.

\subsection{Function spaces and operators}
\label{sect:fspaces}

For $p \in [1, \infty]$ we will denote the Lebesgue spaces on the domain $\Mc$ by $L^p = L^p \left(\Mc\right)$.

Let us define
\begin{gather*}
	\Vc_1 = \left\{ v \in C^\infty\left(\overline{\Mc}; \Rb^2\right) \mid \div \int_{-h}^0 v \, dz' = 0, v = 0 \ \text{in some neigbourhood of} \ \Gamma_l \right\},\\
	\Vc_2 = C^\infty\left( \overline{\Mc} \right), \qquad \Vc = \Vc_1 \times \Vc_2.
\end{gather*}
Let $H_1$ and $H_2$ be the closure of $\Vc_1$ and $\Vc_2$ in $L^2\left(\Mc; \Rb^2 \right)$ and in $L^2\left(\Mc \right)$, respectively, and let $H = H_1 \times H_2$. Equipped with the inner product of $L^2\left(\Mc; \Rb^3\right)$, the space $H$ is a separable Hilbert space. The norms and the inner products on $H_i$ and $H$ will be denoted by $\vert \cdot \vert$ and $\left( \cdot, \cdot\right)$, respectively. Let $P_{H_1}: L^2\left(\Mc, \Rb^2 \right) \to H_1$ denote the hydrostatic Helmholtz-Leray projection, see e.g.\ \cite[Lemma 2.2]{lions1992b} and \cite[Section 4]{hieber2016}. More details on $P_{H_1}$ will be below. The projection $P_H$ is then defined by
\[
	P_H U = \begin{pmatrix}
		P_{H_1} v\\
		T
	\end{pmatrix}, \qquad U = (v, T) \in L^2\left( \Mc, \Rb^3 \right).
\]

Let $V_1$ and $V_2$ be the closure of $\Vc_1$ and $\Vc_2$ in $H^1\left( \Mc; \Rb^2 \right)$ and $H^1\left( \Mc \right)$, respectively, and let $V = V_1 \times V_2$. The space $V$ equipped with the inner product of $H^1\left( \Mc; \Rb^3 \right)$ is a separable Hilbert space. We will denote the norms on the spaces $V_i$ and $V$ by $\Vert \cdot \Vert$. We remark that by e.g.\ \cite[Theorem 3.3]{alessandrini2008} the Poincaré inequality $\Vert v \Vert \leq C \vert \grad v \vert$ holds for $v \in V_1$, which implies the equivalence of norm $\Vert u \Vert \simeq \vert \grad u\vert$. Finally, let $\Vtwo$ be the closure of $\Vc$ in $H^2(\Mc; \Rb^3)$ equipped with the inner product of $H^2(\Mc; \Rb^3)$.

Let $a: V \times V \to \Rb$, $a_i: V_i \times V_i \to \Rb$ be the bilinear forms defined by
\begin{align*}
	a_1\left(v, v^\sharp\right) &= \int_{\Mc} \mu \grad v \cdot \grad v^\sharp + \nu \partial_z v \partial_z v^\sharp \, d\Mc, & &v, v^\sharp \in V_1,\\
	a_2\left(T, T^\sharp\right) &= \int_{\Mc} \mu \grad T \cdot \grad T^\sharp + \nu \partial_z T \partial_z T^\sharp \, d\Mc + \alpha \int_{\Gamma_i} T T^\sharp \, d\Gamma_i, & &T, T^\sharp \in V_2,\\
	a\left(U, U^\sharp\right) &= a_1\left(v, v^\sharp\right) + a_2\left(T, T^\sharp\right), & &U, U^\sharp \in V,
\end{align*}
where $d\Mc = dx\, dy\, dz$ denotes the Lebesgue measure on $\Mc$. In a similar fashion we will denote the Lebesgue measure on $\Mc_0$ by $d\Mc_0$, therefore $d\Mc_0 = dx\, dy$. By \cite[Lemma 2.4]{lions1992b}, the forms $a$ and $a_i$ are continuous and coercive, that is $a$ and $a_i$ satisfy
\begin{gather*}
	a\left(U, U^\sharp\right) \leq C \Vert U \Vert \Vert U^\sharp \Vert, \quad a\left(U, U\right) \geq c \Vert U \Vert^2,  \qquad U, U^\sharp \in V,\\
	a_i\left(U_i, U^\sharp_i\right) \leq C \Vert U_i \Vert \Vert U^\sharp_i \Vert, \quad a_i\left(U_i, U_i\right) \geq c \Vert U_i \Vert^2, \qquad U_i, U_i^\sharp \in V_i.
\end{gather*}
By the Riesz Theorem there exists isomorphisms $\tilde{A}: V \to V'$ and $\tilde{A}_i: V_i \to V_i'$. The unbounded operators $A: H \to H$ and $A_i: H_i \to H_i$ are defined by
\begin{align*}
	D(A) &= \left\lbrace U \in V \mid \tilde{A}U \in H \right\rbrace, & D(A_i) &= \left\lbrace U_i \in V_i \mid \tilde{A}_i U_i \in H_i \right\rbrace,\\
	AU &= \tilde{A}U, \quad U \in D(A), & A_i U_i &= \tilde{A}_i U_i, \quad U_i \in D(A_i).
\end{align*}
The operator $A$ is called the hydrostatic Stokes operator. By \cite[Lemma 2.4]{lions1992b}, see also \cite[Section VI.\S 2]{kato}, the operators $A$ and $A_i$ are self-adjoint and the inverse operators $A^{-1}: V' \to V$ and $A_i^{-1}: V_i' \to V$ are compact. By a standard argument there exists an increasing sequence of positive eigenvalues $\lbrace \lambda_k \rbrace_{k = 1}^\infty$ and a corresponding orthonormal basis consisting of eigenvectors $\lbrace h_k \rbrace_{k=1}^\infty$ of $A$.  Let us recall that  the fractional power $A^s$,  for $s > 0$,   of $A$ is defined by
\begin{align*}
	D\left( A^s \right) &= \left\lbrace U \in H : \sum_{k = 1}^\infty \lambda_k^{2s} \left| \left(U, h_k \right) \right| < \infty \right\rbrace, \quad A^s U &= \sum_{k=1}^\infty \lambda^s_k \left( U, h_k \right) h_k, \mbox{ for } U \in D\left(A^s\right). 
\end{align*}
For $s>0$ let us put 
\[
	 \vert U \vert_s = \vert A^s U \vert = \left( \sum_{k=1}^\infty \lambda^{2s}_k \left| \left( U, h_k \right) \right|^2 \right)^{1/2}.
\]
We have $D\left(A^{1/2}\right) = V$ and $\Vert U \Vert = \vert U \vert_{1/2}$. For $n \in \Nb$ let $H^n = \lin \lbrace h_1, h_2, \dots, h_n \rbrace$. Let $P_n: H \to H^n$ and $Q_n = I - P_n$ denote the canonical projection operator and its complement. Note that for $0 < s_1 < s_2$ the following Poincaré type inequalities holds
\begin{equation}
	\label{eq:inverse.poincare}
	\vert P_n U \vert_{s_2} \leq \lambda_n^{s_2 - s_1} \vert P_n U \vert_{s_1}, \qquad \vert Q_n U \vert_{s_1} \leq \lambda_n^{-(s_2 - s_1)} \vert Q_n U \vert_{s_2}, \qquad U \in D(A^{s_2}).	
\end{equation}
For a proof of \eqref{eq:inverse.poincare} see e.g.\ \cite[Lemma 2.1]{glatt-holtz2009}.

Let $b: V \times V \times \Vtwo \to \Rb$ be the trilinear form defined by
\[
	b(U, U^\sharp, U^\flat)	= \left( P_H
		\begin{pmatrix}
			v \cdot \grad v^\sharp + w(v) \partial_z v^\sharp\\
			v \cdot \grad T^\sharp + w(v) \partial_z T^\sharp
		\end{pmatrix}, U^\flat \right), \qquad U, U^\sharp \in V, U^\flat \in \Vtwo.
\]
Similarly as in \cite[Lemma 2.1 and Lemma 3.1]{petcu2009} we may show that $b$ is continuous on $V \times V \times \Vtwo$ and $V \times \Vtwo \times V$ and satisfies the following anti-symmetry property
\begin{equation}
	\label{eq:b.symmetry}
	b\left(U, U^\sharp, U^\flat\right) = - b\left(U, U^\flat, U^\sharp\right), \qquad U, U^\sharp, U^\flat \in V \ \text{and} \ U^\sharp \ \text{or} \ U^\flat \in \Vtwo.
\end{equation}
Moreover, $b$ satisfies the estimates
\begin{align}
	\label{eq:b.estimate1}
	\left|b\left(U, U^\sharp, U^\flat\right)\right| &\leq c_b \Vert v \Vert \Vert U^\sharp \Vert_{H^2} \Vert U^\flat \Vert, & U, U^\flat \in V, U^\sharp \in \Vtwo,\\
	\label{eq:b.estimate2}
	\left|b\left(U, U^\sharp, U^\flat\right)\right| &\leq c_b \Vert U \Vert^{1/2} \Vert U \Vert^{1/2}_{H^2} \Vert U^\sharp \Vert^{1/2} \Vert U^\sharp \Vert^{1/2}_{H^2} \vert U^\flat \vert & U, U^\sharp \in \Vtwo, U^\flat \in H,
\end{align}
and for $U, U^\sharp \in \Vtwo$, $U^\flat \in H$ we have
\begin{equation}
	\label{eq:b.estimate3}
	\left|b\left(U, U^\sharp, U^\flat\right)\right| \leq c_b \vert U^\flat \vert \left( \vert v \vert_{L^6} \Vert U^\sharp \Vert^{1/2} \Vert U^\sharp \Vert_{H^2}^{1/2} + \Vert v \Vert^{1/2} \vert v \vert_{H^2}^{1/2} \vert \partial_z U^\sharp \vert^{1/2} \Vert \partial_z U \Vert^{1/2} \right).
\end{equation}
Note that the anti-symmetry property \eqref{eq:b.symmetry} implies that
\begin{equation}
	\label{eq:b.zero}
	b\left(U, U^\sharp, U^\sharp\right) = 0 \qquad  \text{for all} \ U \in V, U^\sharp \in \Vtwo.
\end{equation}
We associate a bilinear operator $B: V \times V \to \Vtwo'$ to the trilinear form $b$ by
\[
	\left(B\left(U, U^\sharp\right), U^\flat\right) = b\left(U, U^\sharp, U^\flat\right), \qquad U, U^\sharp \in V, U^\flat \in \Vtwo,
\]
and as usual we write $B(U) = B(U, U)$. By the anti-symmetry property \eqref{eq:b.symmetry} one can also assume that $B: V \times \Vtwo \to V'$. We remark that by \eqref{eq:b.estimate1} and \eqref{eq:b.estimate2}, respectively, the operator $B$ is continuous as a map from $V \times \Vtwo$ to $V'$ and from $\Vtwo \times \Vtwo$ to $H$.

Following the notation of \cite{debussche2011} we define the operators $\Apr: V \to H$ and $E: H \to H$ by
\[
	\Apr U = P_H \begin{pmatrix}
		- \beta_T g \grad \int_z^0 T \, dz'\\
		0
	\end{pmatrix}, \quad
	E U^\sharp = P_H \begin{pmatrix}
		f \vec{k} \times v^\sharp\\
		0
	\end{pmatrix}, \qquad U \in V, U^{\sharp} \in H.
\]
Let $F_U$ be a progressively measurable process such that for all $t > 0$ we have
\begin{equation}
	\label{eq:f.source}
	F_U = P_H \begin{pmatrix}
			F_v\\
			F_T
		\end{pmatrix} \in L^2\left( \Omega, L^2\left(0, t; H\right) \right).
\end{equation}
Let
\begin{equation}
	\label{eq:F.definition}
	F(U) = \Apr U + EU - F_U, \qquad U \in V.	
\end{equation}
Then $F: L^2\left( 0, t; V \right) \to L^2\left( 0, t; H \right)$ satisfies the linear growth condition
\begin{equation}
	\label{eq:F.bounded}
	\int_0^t \vert F(U) \vert^2 \, ds \leq C \left(\Vert F_U \Vert^2_{L^2\left( 0, t; H \right)} + \int_0^t \Vert U \Vert^2 \, ds \right) \qquad \text{for all} \ t  > 0,
\end{equation}
and the Lipschitz continuity condition
\begin{equation}
	\label{eq:F.lipschitz}
	\vert F(U) - F(U^\sharp) \vert \leq C \Vert U - U^\sharp \Vert, \qquad U, U^\sharp \in V.
\end{equation}
Sometimes we will include the $L^2\left(0, t; H\right)$-norm of $F_U$ in \eqref{eq:F.bounded} in the constant $C = C_t$.

Let $\Ho$ and $\Vo$ be the Hilbert space defined by
\begin{align*}
	\Ho &= \left\lbrace v \in L^2\left(\Mc_0; \Rb^2\right) \mid \div v = 0 \ \text{in} \ \Mc_0, v \cdot \vec{n} = 0 \ \text{on} \ \partial \Mc_0 \right\rbrace,\\
	\Vo &= \left\lbrace v \in H^1(\Mc_0; \Rb^2) \cap H \mid v = 0 \ \text{on} \ \partial \Mc_0 \right\rbrace.
\end{align*}
The space $\Ho$ is equipped with the inner product of $L^2\left(\Mc_0, \Rb^2\right)$. On $\Vo$ we assume the iner product given by
\[
	\left( u, v \right)_V = \mu \int_{\Mc_0} \grad u \cdot \grad v \, d\Mc, \qquad u, v \in \Vo.
\]
We will denote the norms on $\Ho$ and $\Vo$ by $\vert \cdot \vert$ and $\Vert \cdot \Vert$, respectively, if there is no ambiguity. Let $A_S: D(A_S) \to \Ho$ be the 2D Stokes operator with Dirichlet boundary condition. It is well known that $A_S$ is a self-adjoint operator and by the result from \cite{giga1985} we have the equivalence of norms
\[
	\vert A_S v \vert \simeq \Vert v \Vert_{H^2\left(\Mc_0, \Rb^2 \right)}, \ v \in D\left(A_S\right).
\]

Let $\Ac_2$ and  $\Ac_3$ be the averaging operators defined for $v: \Mc \to \Rb^2$ by
\begin{equation}
	\label{eq:averaging.operators}
	\left(\Ac_2 v\right)(x, y) = \frac{1}{h} \int_{-h}^0 v(x, y, z') \, dz', \quad \left( \Ac_3 v \right)(x, y, z) = \left( \Ac_2 v\right)(x, y),
\end{equation}
It is straightforward to check that $\Vert \Ac_3 \Vert_{L(H_1)} \leq 1$ and $\Vert \Ac_2 \Vert_{L(H_1, \Ho)} \leq h^{-1/2}$. Also let $\Rc = I - \Ac_3$. Then clearly $\Rc: H_1 \to H_1$ and $\Vert \Rc \Vert_{L\left(H_1\right)} \leq 2$. Moreover, since the spaces $H$ and $\Ho$ have the norm of $L^2\left(\Mc; \Rb^2 \right)$ and $L^2\left( \Mc_0; \Rb^2 \right)$, respectively, we observe that the operators $\Ac_2$, $\Ac_3$ and $\Rc$ remain bounded also if considered with $L^2\left(\Mc\right)$ and $L^2\left(\Mc_0\right)$ in place of $H_1$ and $\Ho$, respectively.

Let us observe that  $v = \Ac v + \Rc v$ for $v \in H_1$. Moreover, following \cite{giga2017} one has  \[P_{H_1} v = P_{\Ho} \Ac v + \Rc v, \;\; v\in L^2\left(\Mc, \Rb^2 \right), \] where $P_{\Ho}$ is the standard 2D Helmholtz-Leray projection on $L^2\left(\Mc_0; \Rb^2 \right)$.

Let $\Uc$ be an auxiliary Hilbert space with an orthonormal basis $\lbrace e_k \rbrace_{k=1}^\infty$ and let $\sigma: V \to L_2\left(\Uc, H \right)$ be defined by
\[
	\sigma(U) = P_H \begin{pmatrix}
					\sigma_1(v, \grad_3 v, T, \grad_3 T)\\
					\sigma_2(v, \grad_3 v, T, \grad_3 T)
				\end{pmatrix}, \qquad U = (v, T) \in V.
\]
For simplicity we will from now on write $\sigma_i(U)$ instead of $\sigma_i(v, \grad_3 v, T, \grad_3 T)$. With a slight abuse of notation, let us define the operator $\Ac_2 \sigma_1: V \to L_2\left(\Uc, H\right)$ by
\[
	\left( \Ac_2 \sigma_1 \right)(U)\zeta = \Ac_2 \left( \sigma_1(U) \zeta \right), \quad U \in V, \zeta \in \Uc,
\]
and similarly for $\Ac_3$ and $\Rc$.

We will consider two sets of assumptions on the noise term $\sigma$. The assumptions can be naturally extended to time-dependent functions $\sigma$.
\begin{enumerate}
	\item First, for the local existence, we assume that $\sigma$ considered as a mapping from $V$ to $H$ and from $D(A)$ to $H$ is continuous and satisfies the following sub-linear growth conditions
	\begin{align}
		\label{eq:sigma.bnd.H}
		\Vert \sigma(U) \Vert_{L_2(\Uc, H)}^2 &\leq C\left( 1 + \vert U \vert^2 \right) + \eta_0 \Vert U \Vert^2, & U &\in V,\\
		\label{eq:sigma.bnd.V}
		\Vert \sigma(U) \Vert_{L_2(\Uc, V)}^2 &\leq C \left( 1 + \Vert U \Vert^2 \right) + \eta_1 \vert A U \vert^2, & U &\in \Vtwo,
	\end{align}
	for some $\eta_j > 0$, where $L_2(K, L)$ denotes	the space of Hilbert-Schmidt operators from a Hilbert space $K$ to another Hilbert space $L$. Moreover, for the local uniqueness in Proposition \ref{prop:pathwise.uniqueness} and the maximal existence in Theorem \ref{thm:maximal.existence} we assume that $\sigma$ is Lipschitz continuous and satisfies
	\begin{align}
		\label{eq:sigma.lip.H}
		\Vert \sigma(U) - \sigma(U^\sharp) \Vert_{L_2(\Uc, H)}^2 &	\leq C \Vert U - U^\sharp \Vert^2, & U, U^\sharp &\in V,\\
		\label{eq:sigma.lip.V}
		\Vert \sigma(U) - \sigma(U^\sharp) \Vert_{L^2(\Uc, V)}^2 &\leq C \Vert U - U^\sharp \Vert^2 + \gamma \vert AU - AU^\sharp \vert^2, & U, U^\sharp &\in \Vtwo,
	\end{align}
	for some $\gamma > 0$.
	\item Secondly, for the global existence, we assume that for $U = (v, T) \in \Vtwo$ the functions $\sigma_i$ moreover satisfy the following inequalities for some $\eta_j > 0$:
	\begin{align}
		\label{eq:sigma.rc.vt}
		\sum_{k=1}^\infty \left| \Rc \sigma_1(U) e_k \right|_{L^6}^2 &\leq C \left( 1 + \vert \Rc v \vert_{L^6}^2 \right) \left( 1 + \Vert U \Vert^2 \right),\\
		\label{eq:sigma.rc.t}
		\sum_{k=1}^\infty \left| \sigma_2(U) e_k \right|_{L^6}^2 &\leq C \left( 1 + \vert T \vert_{L^6}^2 \right) \left(1 + \Vert U \Vert^2 \right),\\
		\label{eq:sigma.grad.ac}
		\Vert \Ac_2 \sigma_1(U) \Vert_{L_2(\Uc, \Vo)}^2 &\leq C \left( 1 + \Vert U \Vert^2 \right) + \eta_2 \vert A_S \Ac_2 v \vert_{\Ho}^2,\\
		\label{eq:sigma.dz}
		\Vert \partial_z \sigma_i(U) \Vert_{L_2(\Uc, L^2)}^2 &\leq C \left( 1 + \Vert U \Vert^2 \right) + \eta_3 \vert \grad_3 \partial_z U_i \vert^2.
	\end{align}
\end{enumerate}

Let $X$ and $Y$ be Banach spaces and let $\Lip(X, Y)$ denote the set of all Lipschitz continuous maps from $X$ to $Y$. Recalling that the space of Hilbert-Schmidt operators have the ideal property, we observe that
\begin{equation}
	\label{eq:average.hs}
	\begin{gathered}
		\Ac_2 \sigma_1 \in \Lip\left(V, L_2\left( \Uc, H_f \right)\right) \cap \Lip \left( D(A), L_2\left(\Uc, H^1(\Mc_0\right) \right),\\
		\Ac_3 \sigma_1, \Rc \sigma_1 \in \Lip\left(V, L_2\left( \Uc, H \right) \right) \cap \Lip \left( D(A), L_2 \left( \Uc, H^1\left( \Mc \right) \right) \right).
	\end{gathered}
\end{equation}

Let us also recall the definitions of Sobolev spaces with fractional time derivative, see e.g.\ \cite{simon1990}. Let $X$ be a separable Hilbert space and let $t > 0$, $p > 1$ and $\alpha \in (0, 1)$. We define
\[
	W^{\alpha, p}(0, t; X) =  \left\{ u \in L^p(0, t; X) \mid \int_0^t \int_0^t \frac{\vert u(s) - u(r) \vert_X^p}{|s - r|^{1+\alpha p}} \, dr \, ds < \infty \right\}
\]
and equip it with the norm
\[
	\Vert u \Vert^{p}_{W^{\alpha, p}(0, t; X)} = \int_0^t \vert u(s) \vert^p_X \, ds + \int_0^t \int_0^t \frac{\vert u(s) - u(r) \vert_X^p}{|s - r|^{1+\alpha p}} \, dr \, ds.
\]

\subsection{Stochastic preliminaries}

Let $\Sc = \left(\Omega, \Fc, \Fb, \Pb\right)$ be a stochastic basis\footnote{In the whole text we always assume that the stochastic bases satisfy the usual conditions.} with filtration $\Fb = \Fct$. Let $\Uc$ be a separable Hilbert space and let $W$ be an $\Fb$-adapted cylindrical Wiener process with reproducing kernel Hilbert space $\Uc$ on $\Sc$. Let $\lbrace e_k \rbrace_{k = 1}^\infty$ be an orthonormal basis of $\Uc$. Let $X$ be another separable Hilbert space and assume that $\Phi \in L^2\left(\Omega, L^2\left(0, T; L_2\left(\Uc, X \right)\right)\right)$. Recall that the stochastic integral w.r.t.\ a cylindrical Wiener process $W$ is defined by
\[
	\int_0^T \Phi \, dW = \sum_{k = 1}^\infty \int_0^T \Phi e_k \, dW^k,
\]
where $W^k$ are independent one dimensional Wiener processes on $\Sc$ such that formally $W = \sum_{k = 1}^\infty e_k W_k$, see e.g.\ \cite[Section 4]{dpz}. The definition of the stochastic integral can be extended to processes satisfying
\begin{equation}
	\label{eq:stochastic.integral.extension}
	\int_0^T \Vert \Phi \Vert^2_{L_2\left( \Uc, X \right)} \, dt < \infty \qquad \Pb\text{-almost surely}.
\end{equation}
For more details see e.g.\ \cite[Section 4]{dpz}.

We will often use the Burkholder-Davis-Gundy inequality
\begin{equation}
	\label{eq:bdg}
	\Eb \sup_{t \in \left[0, T\right]} \left| \int_0^t \Phi \, dW \right|^r_X \leq C_{BDG, r} \, \Eb \left( \int_0^T \Vert \Phi \Vert_{L_2(\Uc, X)}^2 \, dt \right)^{r/2},
\end{equation}
where $\Phi \in L^2\left(\Omega, L^2\left(0, T; L_2\left(\Uc, X\right)\right)\right)$. For proof see e.g.\ \cite[Theorem 3.28, p.\ 166]{karatzas1991}. If $r = 1$, we omit part of the subscript and write $C_{BDG}$ instead of $C_{BDG, 1}$. Moreover, we will require a fractional variation of the above inequality from  \cite[Lemma 2.1]{flandoli1995}. Let $p \geq 2$ and $\alpha \in [0, 1/2)$, then
\begin{equation}
	\label{eq:bdg.frac}
	\Eb \left| \int_0^\cdot \Phi \, dW \right|^p_{W^{\alpha, p}(0, T; X)} \leq c_{BDG, p} \, \Eb \int_0^T \Vert \Phi \Vert_{L_2(\Uc, X)}^p \, dt,
\end{equation}
where $\Phi \in L^p\left(\Omega, L^p\left(0, T; L_2\left(\Uc, X\right)\right)\right)$.

Let $H$, $V$ and $\Ho$ be the Hilbert spaces defined in Section \ref{sect:fspaces} and let $\sigma$ satisfy \eqref{eq:sigma.bnd.H}. Given $U \in L^2\left(\Omega, L^2\left(0, T; V\right)\right)$, by \cite[Proposition 4.30]{dpz} and \eqref{eq:average.hs} we have
\begin{align*}
	\Ac_2 \int_0^T \sigma_1(U) \, dW_1 &= \int_0^T \Ac_2 \sigma_1(U) \, dW_1 \in \Ho,\\
	 \Ac_3 \int_0^T \sigma_1(U) \, dW_1 &= \int_0^T \Ac_3 \sigma_1(U) \, dW_1 \in H,
\end{align*}
both identities holding $\Pb$-almost surely. Similar argument holds in other spaces such as $V$ or $\Vtwo$.

In Section \ref{sect:global.solution} we will often need the stochastic Gronwall Lemma form \cite[Lemma 5.3]{glatt-holtz2009}.

\begin{proposition}
\label{prop:stochastic.gronwall}
Let $t > 0$ and $X, Y, Z, R: [0, \infty) \times \Omega \to [0, \infty)$ be stochastic processes. Let $\tau: \Omega \to [0, t)$ be a stopping time such that
\begin{equation*}
	\Eb \int_0^\tau RX + Z \, ds < \infty
\end{equation*}
and let $\kappa > 0$ be a constant for which
\begin{equation*}
	\int_0^\tau R \, ds < \kappa \qquad \Pb-\text{almost surely}.
\end{equation*}
Assume that there exists $C_0 > 0$ such that for all stopping times $\tau_a, \tau_b$ such that $0 \leq \tau_a \leq \tau_b \leq \tau$ one has
\begin{equation*}
	\Eb \left[ \sup_{s \in \left[\tau_a, \tau_b\right]} X + \int_{\tau_a}^{\tau_b} Y \, ds \right] \leq C_0 \Eb \left[ X(\tau_a) + \int_{\tau_a}^{\tau_b} RX + Z \, ds \right],
\end{equation*}
then
\begin{equation*}
	\Eb \left[ \sup_{s \in \left[0, \tau\right]} X + \int_{0}^{\tau} Y \, ds \right] \leq C(C_0, t, \kappa) \Eb \left[ X(0) + \int_0^{\tau} Z \, ds \right].
\end{equation*}
\end{proposition}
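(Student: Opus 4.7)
The strategy is to partition the random interval $[0,\tau]$ into a bounded number of stochastic subintervals on which $\int R\,ds$ is small enough that the $\int RX\,ds$-term on the right of the hypothesis can be absorbed into the $\sup X$-term on the left. Since $\int_0^\tau R\,ds < \kappa$ almost surely, the number of such subintervals depends only on $C_0$ and $\kappa$, and a finite iteration then delivers the claim.

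Concretely, I would set $\delta = (2C_0)^{-1}$ and define $\tau_0 = 0$ together with, inductively,
$$\tau_{k+1} = \tau \wedge \inf\Bigl\{ s \geq \tau_k : \int_{\tau_k}^s R(u)\,du \geq \delta \Bigr\},$$
with the convention $\inf \emptyset = +\infty$. Each $\tau_k$ is an $\Fb$-stopping time. Choosing $N = \lfloor 2C_0 \kappa \rfloor + 1$, the a.s.\ bound $\int_0^\tau R\,ds < \kappa$ forces $\tau_N = \tau$ $\Pb$-almost surely, since otherwise one would have $\int_0^{\tau_N} R\,ds \geq N\delta > \kappa$.

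Applying the hypothesis with $\tau_a = \tau_k$, $\tau_b = \tau_{k+1}$ and using $\int_{\tau_k}^{\tau_{k+1}} RX\,ds \leq \delta \sup_{s\in[\tau_k,\tau_{k+1}]} X$ yields
$$\Eb\Bigl[\sup_{s\in[\tau_k,\tau_{k+1}]} X + \int_{\tau_k}^{\tau_{k+1}} Y\,ds\Bigr] \leq C_0\Eb X(\tau_k) + \tfrac{1}{2}\Eb\sup_{s\in[\tau_k,\tau_{k+1}]} X + C_0 \Eb\int_{\tau_k}^{\tau_{k+1}} Z\,ds.$$
Once the supremum is known to have finite expectation (which follows inductively from $\Eb X(\tau_k) < \infty$ combined with the global bound $\Eb\int_0^\tau (RX+Z)\,ds < \infty$), the $\sup X$-term may be moved to the left, giving
$$\Eb\Bigl[\sup_{s\in[\tau_k,\tau_{k+1}]} X + 2\int_{\tau_k}^{\tau_{k+1}} Y\,ds\Bigr] \leq 2C_0 \Eb X(\tau_k) + 2C_0 \Eb\int_{\tau_k}^{\tau_{k+1}} Z\,ds. \quad (\star)$$

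Extracting the endpoint bound $\Eb X(\tau_{k+1}) \leq 2C_0\Eb X(\tau_k) + 2C_0 \Eb\int_{\tau_k}^{\tau_{k+1}} Z\,ds$ from $(\star)$ and iterating in $k$ delivers
$$\Eb X(\tau_k) \leq (2C_0)^k \Eb X(0) + (2C_0)^k \Eb\int_0^\tau Z\,ds, \qquad k = 0,1,\ldots,N.$$
Summing $(\star)$ across $k=0,\ldots,N-1$ along the partition $[0,\tau] = \bigcup_k [\tau_k,\tau_{k+1}]$ and inserting this bound then produces the desired inequality with a constant of order $N(2C_0)^{N+1}$ depending only on $C_0$ and $\kappa$. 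The main delicate point is the bookkeeping in the absorption step: one must check inductively, at each $k$, that the supremum on $[\tau_k,\tau_{k+1}]$ has finite expectation before subtracting it; once that is in place the rest reduces to elementary discrete Gronwall, and the parameter $t$ enters $C$ only through the a priori bound $\tau < t$, without playing an explicit role in the constant.
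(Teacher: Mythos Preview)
The paper does not give its own proof of this proposition; it is quoted verbatim from \cite[Lemma 5.3]{glatt-holtz2009}. Your argument is correct and is exactly the standard proof of this result (and the one given in the cited reference): partition $[0,\tau]$ by stopping times so that $\int R\,ds \leq (2C_0)^{-1}$ on each piece, absorb the $RX$-term into the supremum, and iterate finitely many times using the a.s.\ bound $\int_0^\tau R\,ds < \kappa$. Your remark that the constant in fact depends only on $C_0$ and $\kappa$, with $t$ entering only through $\tau < t$, is also accurate.
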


\subsection{Definition of solutions}

For martingale solutions (i.e.\ weak in stochastic sense), we consider the initial data to be given by a Borel measure $\mu_0$ on $V$ satisfying, for some $q \geq 2$,
\begin{equation}
	\label{eq:mu.zero}
	\int_V \Vert U \Vert^q \, d\mu_0(U) < \infty.
\end{equation}
Given a stochastic basis $\left( \Omega, \Fc, \Fb, \Pb \right)$, we may find an $\Fc_0$-measurable $V$-valued random variable $U_0$ such that the law of $U_0$ is $\mu_0$ and $U_0 \in L^q\left( \Omega; \Fc_0, V\right)$.

We may now reformulate the equations \eqref{eq:pe.v.reform}-\eqref{eq:pe.rho.reform} in an abstract form
\begin{equation}
	\label{eq:pe.abstract}
	dU + \left[ AU + B(U) + \Apr U + EU \right] \, dt =  F_U \, dt + \sigma(U) \, dW, \qquad U(0) = U_0.
\end{equation}

\begin{definition}
Let $\mu_0$ be a Borel probability measure on $V$ such that \eqref{eq:mu.zero} holds for some $q \geq 2$. Let $F_U$ satisfy \eqref{eq:f.source} and let $\sigma$ be such that \eqref{eq:sigma.bnd.H}-\eqref{eq:sigma.lip.V} holds. A quadruple $(\Sc, W, U, \tau)$ is called a \emph{local martingale solution} if $\Sc = \left(\Omega, \Fc, \Fb, \Pb\right)$ is a stochastic basis, $W$ is an $\Fb$-adapted cylindrical Wiener process with reproducing kernel Hilbert space $\Uc$, $\tau$ is an $\Fb$-stopping time and $U\left(\cdot \wedge \tau\right): \Omega \times [0, \infty) \to V$ is a progressively measurable process\footnote{We could define the solution to be measurable and adapted and then use the fact that for a measurable and adapted process  there is a progressively measurable modification. In the rest of the manuscript we will understand the progressively measurable processes up to a modification, in particular in Appendix \ref{app:ito}.} such that $\tau > 0$ $\Pb$-a.s.\ and for all $t \geq 0$
\begin{equation}
	\label{eq:solution.regularity}
	U\left( \cdot \wedge \tau \right) \in L^2\left(\Omega; C\left([0, t], V\right)\right), \qquad \mathds{1}_{[0, \tau]}(\cdot) U \in L^2\left(\Omega; L^2\left(0, t; D(A) \right)\right),
\end{equation}
the law of $U(0)$ is $\mu_0$ and $U$ satisfies the following equality in $H$
\begin{equation}
	\label{eq:solution.def}
	U\left(t \wedge \tau\right) + \int_0^{t \wedge \tau} AU + B(U) + \Apr U + EU - F_U \, ds = U(0) + \int_0^{t \wedge \tau} \sigma(U) \, dW
\end{equation}
for all $t \geq 0$.

Moreover, if $\tau = +\infty$ $\Pb$-a.s.\ we call the triple $(\Sc, W, U)$ a \emph{global martingale solution}.
\end{definition}

\begin{definition}
\label{def:pathwise.sol}
Let $F$ and $\sigma$ satisfy \eqref{eq:f.source} and \eqref{eq:sigma.bnd.H}-\eqref{eq:sigma.lip.V}. Let $\Sc = \left(\Omega, \Fc, \Fb, \Pb \right)$ be a stochastic basis and let $W$ be a given $\Fb$-adapted cylindrical Wiener process with reproducing kernel Hilbert space $\Uc$. Let $U_0 \in L^2\left(\Omega; \Fc_0, V\right)$ be an $\Fc_0$-measurable random variable.
\begin{enumerate}
	\item A pair $(U, \tau)$ is called a \emph{local pathwise solution} if $\tau$ is an $\Fb$-stopping time and $U(\cdot \wedge \tau)$ is a $V$-valued progressively measurable stochastic process satisfying \eqref{eq:solution.regularity} and \eqref{eq:solution.def}.
	\item A couple $\left(U, \xi\right)$ is called a \emph{maximal pathwise solution} if there exists an increasing sequence $\lbrace \tau_N \rbrace_{N = 1}^\infty$ of $\Fb$-adapted stopping times such that $\tau_N \nearrow \xi$ $\Pb$-a.s., for all $N \in \Nb$ the couple $\left(U|_{[0, \tau_N]}, \tau_N \right)$ is a local pathwise solution and for all local pathwise solutions $(\hat{U}, \hat{\tau})$ we have $\hat{\tau} \leq \xi$ a.s.\ and the solutions $U$ and $\hat{U}$ coincide on $\left[0, \hat{\tau}\right]$, that is $U|_{\left[0, \hat{\tau}\right]} = \hat{U}$.
	\item A maximal pathwise solution $\left(U, \xi\right)$ is called \emph{global} if $\xi = +\infty$ $\Pb$-almost surely.
\end{enumerate}
\end{definition}

In the following Section we will study a modified problem  \eqref{eq:pe.modified} similar to \eqref{eq:pe.abstract}. The definitions of solutions of the modified equation \eqref{eq:pe.modified} remain the same as the definitions above with obvious adjustments.

\subsection{Main results and an example}
\label{sect:main.results}

We remark that due the abstract form of the proof of Theorem \ref{thm:maximal.existence} the maximal existence result also holds for the \emph{physical} boundary conditions from \cite{debussche2011}, see also \cite{kukavica2007}, \cite{petcu2009} and \cite{glatt-holtz17} for a more complicated setting. The physical boundary conditions read as follows:
\begin{align*}
	\text{on} &\ \Gamma_i: & &\nu_v \partial_z v = 0, \quad w = 0, \quad \nu_T \partial_z T + \alpha_T T = 0,\\
	\text{on} &\ \Gamma_l: & &v = 0, \quad w = 0,\quad \partial_{\vec{n}_H} T = 0,\\
	\text{on} &\ \Gamma_b: & &v = 0, \quad w = 0, \quad \partial_z T = 0.
\end{align*}

\begin{definition}
We say that $\sigma$ satisfies the hypothesis $H_p$ for $p \geq 2$ if \eqref{eq:sigma.bnd.H}-\eqref{eq:sigma.lip.V} hold with
\begin{equation}
	\label{eq:small.constants.maximal}
	\eta_1 < \frac{1}{p\left( 1 + C^2_{BDG} \right) - 1} \wedge 10^{2/p-1} \qquad \text{and} \qquad \gamma < \frac{2}{C_{BDG}^2},
\end{equation}
where $C_{BDG}$ is the constant from the Burkholder-Davis-Gundy inequality \eqref{eq:bdg}.
\end{definition}

In both theorems below let $\Sc = \left(\Omega, \Fc, \Fb, \Pb \right)$ be a stochastic basis and let $W$ be a given $\Fb$-adapted cylindrical Wiener process with reproducing kernel Hilbert space $\Uc$.

\begin{theorem}
\label{thm:maximal.existence}
Let $U_0 \in L^2\left(\Omega; \Fc_0, V\right)$ be a $V$-valued random variable and let $F_U$ satisfy \eqref{eq:f.source}. Let $\sigma$ satisfy Hypothesis $H_4$, see \eqref{eq:small.constants.maximal}. Then there exists a unique maximal pathwise solution $\left( U, \xi \right)$ of \eqref{eq:pe.abstract}. The maximal solution also satisfies
\begin{equation}
	\label{eq:explosion}
	\sup_{t \in \left[0, \xi\right)} \Vert U \Vert^2 + \int_0^{\xi} \vert AU \vert^2  \, dt = \infty \qquad \Pb\text{-a.s.\ on} \ \lbrace \xi < \infty \rbrace.
\end{equation}

Moreover, if $U_0 \in L^p\left( \Omega; \Fc_0, V \right)$ for some $p > 2$ and $\sigma$ satisfies Hypothesis $H_{\max \lbrace p, 4 \rbrace}$, then for all $N \in \Nb$ and $t \geq 0$
\begin{equation}
	\label{eq:solution.regularity.p}
	U\left( \cdot \wedge \tau_N \right) \in L^p \left( \Omega, C\left( [0, t], V \right) \right), \qquad \mathds{1}_{[0, \tau_N]} \vert AU \vert^2 \Vert U \Vert^{p-2} \in L^1\left( \Omega, L^1\left( 0, t \right) \right).
\end{equation}
\end{theorem}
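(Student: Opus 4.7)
Adapting the strategy of \cite{debussche2011} to the gradient-dependent setting, the plan is in four stages: (i) introduce a smooth cutoff $\theta_R(\Vert U\Vert) \in [0,1]$ with $\theta_R \equiv 1$ on $[0,R]$ and support in $[0, R+1]$, and modify the drift in \eqref{eq:pe.abstract} so that all coefficients become globally Lipschitz on $V$; (ii) construct a global pathwise solution $U^R$ of the truncated equation via a Galerkin scheme, the classical Skorokhod theorem, and Gy\"{o}ngy--Krylov; (iii) stop $U^R$ at $\tau^R = \inf\{t : \Vert U^R(t)\Vert \geq R\}$ and patch the family over $R$ to obtain a maximal pathwise solution $(U, \xi)$ of \eqref{eq:pe.abstract} in the sense of Definition \ref{def:pathwise.sol}; (iv) verify the blow-up alternative \eqref{eq:explosion} and the higher-moment regularity \eqref{eq:solution.regularity.p}.

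For the a priori estimates in (ii), apply It\^{o}'s formula to $\Vert U^n\Vert^2 = a(U^n, U^n)$ for the Galerkin projection, use the antisymmetry \eqref{eq:b.zero} with the trilinear bounds \eqref{eq:b.estimate1}--\eqref{eq:b.estimate2}, coercivity of $a$, the linear growth \eqref{eq:F.bounded}, and the bound \eqref{eq:sigma.bnd.V} to obtain schematically
\begin{equation*}
	d\Vert U^n\Vert^2 + 2\vert AU^n\vert^2\,dt \leq C_R(1 + \Vert U^n\Vert^2)\,dt + \eta_1 \vert AU^n\vert^2\,dt + 2(AU^n, \sigma(U^n)\,dW),
\end{equation*}
where the truncation bounds the trilinear contribution by $C_R(1+\Vert U^n\Vert^2)$. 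Since Hypothesis $H_4$ forces $\eta_1 < 1$, the term $\eta_1 \vert AU^n\vert^2$ is absorbed on the left; applying BDG \eqref{eq:bdg}, Gr\"{o}nwall, and the fractional BDG \eqref{eq:bdg.frac} yields uniform bounds of $U^n$ in $L^2(\Omega; L^\infty_t V \cap L^2_t D(A))$ with fractional time regularity. Standard tightness and Skorokhod then produce a martingale solution. Pathwise uniqueness for the truncated problem is obtained by applying It\^{o} to $\Vert Z\Vert^2$ with $Z := U - \tilde U$: the Lipschitz structure of the truncated drift is controlled by the trilinear estimates, while \eqref{eq:sigma.lip.V} generates a $\gamma\vert AZ\vert^2$ term, and the BDG bound on the martingale piece in the It\^{o} expansion adds another $C_{BDG}^2 \gamma\vert AZ\vert^2$; the constraint $\gamma < 2/C_{BDG}^2$ is precisely what ensures the viscous dissipation $2\vert AZ\vert^2$ dominates, closing a Gr\"{o}nwall inequality. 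Gy\"{o}ngy--Krylov then promotes the martingale solution to a pathwise solution $U^R$; step (iii) becomes routine because uniqueness for the truncated problem forces $U^R$ and $U^{R'}$ to coincide on $[0, \tau^R]$ for $R' > R$, so one sets $\xi := \lim_R \tau^R$ and glues.

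The proof of the blow-up alternative \eqref{eq:explosion} is the most delicate step, and I expect it to be the main obstacle. The argument is by contradiction: on the event $\{\xi < \infty\} \cap \{\sup_{[0,\xi)}\Vert U\Vert^2 + \int_0^{\xi}\vert AU\vert^2\,dt < \infty\}$, the energy bound yields a $V$-valued pathwise limit $U(\xi^-)$, and the It\^{o} formula extended up to $\xi$ shows the stochastic integral has a continuous extension. One can then restart the local construction at time $\xi$ with initial datum $U(\xi^-)$ and extend $U$ strictly past $\xi$, contradicting maximality; careful handling of measurability at the random initial time and of the restarted Wiener increments is the technical core. Finally, \eqref{eq:solution.regularity.p} is proved by applying It\^{o} to $\Vert U(\cdot\wedge\tau_N)\Vert^p$; the direct drift and quadratic-variation contributions, combined with \eqref{eq:sigma.bnd.V}, generate a coefficient of order $p\,\eta_1$ in front of $\Vert U\Vert^{p-2}\vert AU\vert^2$, while the BDG bound on the martingale supremum produces an additional $pC_{BDG}^2\eta_1 \Vert U\Vert^{p-2}\vert AU\vert^2$; the explicit threshold $\eta_1 < (p(1+C_{BDG}^2)-1)^{-1}$ in Hypothesis $H_p$ is calibrated so that the viscous term $p\Vert U\Vert^{p-2}\vert AU\vert^2$ on the left dominates these, allowing the $p$-th moment estimate to close by Gr\"{o}nwall.
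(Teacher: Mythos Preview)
Your overall architecture (truncate, Galerkin, Skorokhod, Gy\"ongy--Krylov, patch, blow-up by restart) is the right one and matches the paper's, but there is a genuine gap in step (ii): the specific cutoff you choose does not close the $V$-energy estimate for the primitive equations.

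With your cutoff $\theta_R(\Vert U^n\Vert)$, the It\^o formula for $\Vert U^n\Vert^2$ produces the term $-2\theta_R(\Vert U^n\Vert)\,b(U^n,U^n,AU^n)$. The only available control is \eqref{eq:b.estimate2}, which gives $|b(U,U,AU)|\le c_b\Vert U\Vert\,|AU|^2$; there is no cancellation $(B(U),AU)=0$ here as in 2D Navier--Stokes. Hence
\[
\theta_R(\Vert U^n\Vert)\,|b(U^n,U^n,AU^n)|\le c_b(R+1)\,|AU^n|^2,
\]
which is \emph{not} of the form $C_R(1+\Vert U^n\Vert^2)$ as you claim. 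Together with the $\eta_1|AU^n|^2$ from \eqref{eq:sigma.bnd.V} you must absorb $(\eta_1+c_b(R+1))|AU^n|^2$ into the viscous $2|AU^n|^2$, which fails as soon as $R$ is moderately large. The same obstruction appears in the $p$-th moment estimate, where the trilinear piece contributes $p\,c_b(R+1)\Vert U^n\Vert^{p-2}|AU^n|^2$.

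The paper resolves this by cutting off not on $\Vert U\Vert$ but on $\Vert U-U_\ast\Vert$, where $U_\ast=e^{-tA}U_0$ solves the linear problem \eqref{eq:linear}. One then works with $\bar U^n=U^n-U^n_\ast$, which starts at $0$; expanding $B(\bar U^n+U^n_\ast)$ bilinearly, the dangerous diagonal piece is $\theta(\Vert\bar U^n\Vert)\,b(\bar U^n,\bar U^n,A\bar U^n)$, bounded by $c_b\kappa\,\Vert\bar U^n\Vert^{p-2}|A\bar U^n|^2$ with a \emph{fixed small} $\kappa$ (the support radius of $\theta$), while the cross terms involving $U^n_\ast$ are harmless thanks to the a priori linear bounds \eqref{eq:una.estimate1}. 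This is Lemma~\ref{lemma:approximation.estimates} in the paper, and it is where both smallness conditions in \eqref{eq:small.constants.maximal} genuinely enter (the quadratic-variation term actually contributes $p(p-1)\eta_1$, not $p\eta_1$, and together with the BDG contribution this gives exactly the threshold $\eta_1<(p(1+C_{BDG}^2)-1)^{-1}$). The local pathwise solution is then obtained by stopping at $\tau=\inf\{t:\Vert U-U_\ast\Vert\ge\kappa\}$; since $\kappa$ is fixed, the extension to $U_0\in L^2(\Omega;V)$ requires an additional decomposition of $\Omega$ into $\{k\le\Vert U_0\Vert<k+1\}$ (Proposition~\ref{prop:local.path.existence}), which your outline does not address. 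Your steps (iii)--(iv) are then essentially as in the paper.
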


\begin{theorem}
\label{thm:global.existence}
Let $U_0 \in L^6\left(\Omega; \Fc_0, V \right)$ be a $V$-valued random variable and let $F_U = (F_v, F_T)$ satisfy both \eqref{eq:f.source} and
\[
	F_T \in L^2\left( \Omega, L^2\left( 0, t; L^2\left( \Gamma_i \right) \right) \right)
\]
for all $t > 0$. Let $\sigma$ satisfy Hypothesis $H_6$, see \eqref{eq:small.constants.maximal}, and \eqref{eq:sigma.rc.vt}-\eqref{eq:sigma.dz} with constants $\eta_0$, $\eta_2$ and $\eta_3$ such that
\begin{equation}
	\label{eq:small.constants.global}
	\eta_0 < \frac{2}{3 + 2 C_{BDG}^2}, \qquad \eta_2  < \frac{1}{C^2_{BDG} + \tfrac32}, \qquad 2 C^2_{BDG} \eta_3 < \mu \wedge \nu.
\end{equation}
Then there exists a unique global patwhise solution $U$ of \eqref{eq:pe.abstract}.
\end{theorem}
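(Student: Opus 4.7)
The plan is to upgrade the maximal pathwise solution $(U, \xi)$ provided by Theorem \ref{thm:maximal.existence} (available since Hypothesis $H_6$ is strictly stronger than $H_4$ and $L^6(\Omega, V) \subset L^2(\Omega, V)$) to a global one, i.e.\ to prove $\xi = +\infty$ $\Pb$-a.s. By the blow-up criterion \eqref{eq:explosion} it suffices to produce, for each fixed $T>0$, an a.s.\ finite bound on $\sup_{t \in [0, T\wedge\xi)}\Vert U(t)\Vert^2 + \int_0^{T\wedge\xi}\vert AU\vert^2\,dt$. Following \cite{cao2007} I would split the horizontal velocity into its barotropic part $\Ac_2 v$ (a 2D field on $\Mc_0$) and its baroclinic part $\Rc v$ (a zero-vertical-average 3D field), derive a chain of a priori estimates of increasing regularity along a localising sequence of stopping times $\tau_N$, and close the loop via the stochastic Gronwall Lemma (Proposition \ref{prop:stochastic.gronwall}).

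The first two estimates secure low regularity. Applying the It\^{o} formula to $\vert U\vert^2$ kills the nonlinearity by \eqref{eq:b.zero}; the noise quadratic variation is bounded using \eqref{eq:sigma.bnd.H}, and the smallness $\eta_0 < 2/(3 + 2C_{BDG}^2)$ is precisely what allows one to absorb both the coercivity term and the Burkholder supremum coming from \eqref{eq:bdg}, yielding $\Eb\bigl[\sup_{s \leq T\wedge\tau_N}\vert U\vert^2 + \int_0^{T\wedge\tau_N}\Vert U\Vert^2\,ds\bigr] < \infty$ uniformly in $N$. Next, applying It\^{o} to $\vert \Rc v\vert_{L^6}^6$ and $\vert T\vert_{L^6}^6$ removes the barotropic pressure via the projection $\Rc$, and the resulting trilinear terms are treated by H\"older's inequality; the hypotheses \eqref{eq:sigma.rc.vt}--\eqref{eq:sigma.rc.t} control the quadratic variation and the $L^6$ It\^{o} correction, so that a Gronwall step bounds $\vert \Rc v\vert_{L^6}^6$ and $\vert T\vert_{L^6}^6$ on $[0, T\wedge\tau_N]$ in terms of the previously controlled $\Ac_2 v$ kinetic energy.

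The crucial step is a simultaneous $H^1$ bound on the barotropic mode and on the full state. The vertically averaged equation for $\Ac_2 v$ on $\Mc_0$ is of 2D Navier-Stokes type, with forcing coming from the Reynolds-type stress $\Ac_2[(\Rc v\cdot\grad)\Rc v + (\div \Rc v)\Rc v]$ and the temperature gradient; applying It\^{o} to $\Vert \Ac_2 v\Vert_{\Vo}^2$, the classical 2D trilinear estimate combined with the $L^6$ bound of the preceding paragraph tames the forcing, while \eqref{eq:sigma.grad.ac} with $\eta_2 < 1/(C_{BDG}^2 + 3/2)$ lets one absorb the martingale supremum, producing an estimate on $\Eb\bigl[\sup\Vert \Ac_2 v\Vert_{\Vo}^2 + \int \vert A_S\Ac_2 v\vert_{\Ho}^2\,ds\bigr]$. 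Finally, It\^{o} applied to $\Vert U\Vert^2$ and the sharp trilinear bound \eqref{eq:b.estimate3} expanded along $v = \Ac_3 v + \Rc v$ replaces the unavailable $\vert v\vert_{H^2}$ by $\vert A_S\Ac_2 v\vert$ and $\vert \Rc v\vert_{L^6}$; the noise bound \eqref{eq:sigma.dz} with the sharp constraint $2C_{BDG}^2 \eta_3 < \mu \wedge \nu$ is exactly what preserves a positive fraction of the dissipation after absorbing the associated supremum.

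These steps combine into a differential inequality of the form assumed in Proposition \ref{prop:stochastic.gronwall}, with $X = \Vert U\Vert^2$, $Y = \vert AU\vert^2$, and an a.s.\ integrable $R$ built from $\vert A_S\Ac_2 v\vert^2$ and the $L^6$ norms. Introducing an iterated stopping-time truncation $\tau_N^\ast = \inf\{t : \int_0^t R\,ds \geq N\} \wedge \tau_N \wedge T$ localises $R$, and Proposition \ref{prop:stochastic.gronwall} then furnishes uniform-in-$N$ bounds on $\Eb[\sup_{t\leq\tau_N^\ast}\Vert U\Vert^2 + \int_0^{\tau_N^\ast}\vert AU\vert^2\,ds]$; a Chebyshev argument combined with $\tau_N^\ast \nearrow T\wedge\xi$ yields $\Pb(\xi \leq T) = 0$, and letting $T\to\infty$ finishes the proof, uniqueness being inherited from Theorem \ref{thm:maximal.existence}. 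The main obstacle I anticipate is the self-consistency of the chain of estimates: each step must feed $R$-type quantities already controlled at the previous level, and the three smallness constants in \eqref{eq:small.constants.global} must be simultaneously compatible with all four absorption arguments above. It is precisely the $L^6$-in-$\Omega$ regularity afforded by \eqref{eq:solution.regularity.p} with $p = 6$ that makes the integrability of $R$ against the dissipation $Y$ hold, which is why Theorem \ref{thm:maximal.existence} had to be stated in its refined form and why the initial data is assumed in $L^6(\Omega; \Fc_0, V)$.
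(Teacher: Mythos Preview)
Your overall architecture is right: start from the maximal solution, decompose $v=\Ac_3 v + \Rc v$, run a hierarchy of stochastic a priori estimates via Proposition~\ref{prop:stochastic.gronwall}, and close a final $\Vert U\Vert^2$ estimate that contradicts the blow-up~\eqref{eq:explosion}. However, your chain is missing a crucial link, and as a consequence you have misidentified where the smallness constant $\eta_3$ actually enters.

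When you apply the It\^{o} formula to $\Vert U\Vert^2$ and invoke~\eqref{eq:b.estimate3} with $U^\flat=AU$, the second term on the right of~\eqref{eq:b.estimate3} contains the factor $\vert v\vert_{H^2}^{1/2}\vert\partial_z U\vert^{1/2}\Vert\partial_z U\Vert^{1/2}$. After Young's inequality (absorbing $\vert v\vert_{H^2}\lesssim\vert AU\vert$ into the dissipation) this leaves the Gronwall weight
\[
R \;\sim\; 1 + \vert v\vert_{L^6}^4 + \vert\partial_z U\vert^2\,\Vert\partial_z U\Vert^2,
\]
and your bounds on $\vert\Rc v\vert_{L^6}$, $\vert T\vert_{L^6}$ and $\Vert\Ac_2 v\Vert_{\Vo}$ do \emph{not} control the last term. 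You therefore need separate estimates on $\vert\partial_z v\vert^p$ (with $p=2,4$) and on $\vert\partial_z T\vert^4$, together with the accompanying dissipations $\int \vert\partial_z v\vert^{p-2}\vert\grad_3\partial_z v\vert^2\,ds$ and $\int \vert\partial_z T\vert^{2}\vert\grad_3\partial_z T\vert^2\,ds$. These are obtained from the It\^{o} formula applied to $\vert\partial_z v\vert^p$ and $\vert\partial_z T\vert^4$; the noise contribution here is governed by $\Vert\partial_z\sigma_i(U)\Vert_{L_2(\Uc,L^2)}^2$, i.e.\ by assumption~\eqref{eq:sigma.dz}, and this is precisely where $2C_{BDG}^2\eta_3<\mu\wedge\nu$ is used, to retain a positive share of the anisotropic dissipation $\mu\vert\grad\partial_z U_i\vert^2+\nu\vert\partial_{zz}U_i\vert^2$. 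In the final $\Vert U\Vert^2$ step the relevant noise bound is~\eqref{eq:sigma.bnd.V}, controlled by $\eta_1$ via Hypothesis~$H_6$, not $\eta_3$.

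Correspondingly, the chain of stopping times must be longer than you indicate: after $\tau_K^w$ (from the $H$-estimate) and $\tau_K^{\tilde v}$ (from the $L^6$ baroclinic estimate) and $\tau_K^{\grad\bar v}$ (from the barotropic $H^1$ estimate), you need a further $\tau_K^{\partial_z v}$ and a $\tau_K^{T}$ (the latter packaging both $\vert T\vert_{L^6}$ and $\vert\partial_z T\vert$). Each is shown to tend to~$\infty$ a.s.\ by Lemma~\ref{lemma:stopping.time.infinity}, feeding the next level. Only with $\int_0^{\cdot\wedge\xi}\vert\partial_z U\vert^2\Vert\partial_z U\Vert^2\,dr$ a.s.\ finite does the $R$ in the final stochastic Gronwall argument satisfy $\int_0^{\tau} R\,ds<\kappa$ a.s., as Proposition~\ref{prop:stochastic.gronwall} requires. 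Your placement of the $T$ $L^6$-estimate alongside the baroclinic one is harmless, but the $\partial_z T$ bound genuinely needs $\partial_z v$ information first (the convective term in the $\partial_z T$ equation produces a $\vert\grad_3\partial_z v\vert$ factor).
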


The necessity of the additional regularity of $F_T$ will be made apparent in the proof of Proposition \ref{prop:T.estimates}. The proofs of the above Theorems can be found in Sections \ref{sec:proof.maximal.existence} and \ref{sec:proof.global.existence}, respectively.

Let us now give two explicit examples of non-trivial noise terms $\sigma$ that satisfy the assumptions of Theorem \ref{thm:maximal.existence} and Theorem \ref{thm:global.existence}. In the following examples let $\Uc = \ell^2(\Nb)$, let $\lbrace e_k \rbrace_{k = 1}^\infty$ be the canonical basis of $\ell^2(\Nb)$ and let $W$ be a cylindrical Wiener process $W$ with reproducing kernel Hilbert space $\Uc$. For simplicity we consider only the term $\sigma_1$.

\begin{example}
\label{example.1}
Let $\phi_k \in C^1\left(\overline{\Mc}, \Rb^{2}\right)$, $\psi_k \in C^1\left(\overline{\Mc}\right)$ and $\chi_k \in V_1$ be such that
\begin{equation*}
	\sum_{k = 1}^\infty \vert \phi_k \vert_{L^\infty}^2 + \vert \psi_k \vert^2_{L^\infty} = \theta_0^2, \quad \sum_{k = 1}^{\infty} \vert \grad_3 \phi_k \vert_{L^\infty}^2 + \vert \grad_3 \psi_k \vert^2_{L^\infty} = \theta_1^2, \quad \sum_{k=1}^\infty \Vert \chi_k \Vert^2 = \kappa^2,
\end{equation*}
for some $\theta_1, \theta_2, \kappa \geq 0$. Let $\alpha_k \geq 0$ be such that $\sum_{k=1}^\infty \alpha_k^2 = \alpha^2$ for some $\alpha \geq 0$. Let us define
\begin{equation}
	\label{example:sigma.maximal}
	\sigma_1(v) \zeta = \sum_{k=1}^\infty \zeta_k \left[ \left( \phi_k \cdot \grad \right) v + \psi_k \partial_z v + \alpha_k v + \chi_k \right], \qquad \zeta = \lbrace \zeta_k \rbrace_{k=1}^\infty \in \Uc.
\end{equation}
Then $\sigma_1$ satisfies
\[
	\Vert \sigma_1(v) \Vert^2_{L_2(\Uc, H_1)} = \sum_{k=1}^\infty \vert (\phi_k \cdot \grad) v + \psi_k \partial_z v + \alpha_k v + \chi_k \vert_{H_1}^2 \leq C \left( \theta_0^2 \Vert v \Vert^2 + \alpha^2 \vert v \vert^2 + \kappa^2 \right), \quad v \in V_1.
\]
Therefore \eqref{eq:sigma.bnd.H} holds and if $\theta_0$ is sufficiently small the condition on $\eta_0$ from \eqref{eq:small.constants.global} can be met as well. One can also show that $\sigma_1$ satisfies
\[
	\Vert \sigma_1(v) \Vert_{L_2(\Uc, V_1)}^2 \leq C \left[ \left( \theta_0^2 + \alpha^2 \right) \Vert v \Vert^2 + \theta_1^2 \vert A_1 v \vert^2 + \kappa^2\right], v \in D(A_1).
\]
Thus \eqref{eq:sigma.bnd.V} holds. If one assumes that $\theta_1$ is sufficiently small, the assumption on $\eta_1$ in \eqref{eq:small.constants.maximal} can be satisfied. The Lipschitz continuity properties can be checked in a straightforward manner.
\end{example}

We emphasize that although the smallness of coefficients may seem to be overly restrictive, it contains the class of noise terms for which the uniqueness of invariant measures is typically established, see also \cite[Remark 1.4]{glatt-holtz2014}.
	
\begin{example}
\label{example.2}
Due to the additional structural assumptions \eqref{eq:sigma.rc.vt}, \eqref{eq:sigma.rc.t} and \eqref{eq:sigma.grad.ac} that come from the estimates on the baroclinic and barotropic modes, see the proofs in Section 4, the previous example \eqref{example:sigma.maximal} does not in general meet the assumptions of Theorem \ref{thm:global.existence}. Let $\phi_k \in C^1\left(\overline{\Mc}, \Rb^2 \right)$ be such that $\Ac_3 \phi_k = \phi_k$ for all $k \in \Nb$, that is $\phi_k$ is independent of $z$. Moreover, let $\chi_k \in V$ and let
\[
	\sum_{k = 1}^\infty \vert \psi_k \vert_{L^\infty}^2 = \theta_0^2, \qquad \sum_{k = 1}^\infty \vert \grad \phi_k \vert^2_{L^\infty} \leq \theta_1^2, \qquad \sum_{k = 1}^\infty \Vert \chi_k \Vert^2 = \kappa^2,
\]
for some $\theta_0, \theta_1, \kappa > 0$. Let $\alpha, \alpha_k \geq 0$ be as in Example \ref{example.1} and let us define
\begin{equation*}
	\sigma_1(v) \zeta = \sum_{k = 1}^\infty \zeta_k \left( \phi_k \grad \Ac_3 v + \alpha_k v + \chi_k \right), \qquad \zeta = \lbrace \zeta_k \rbrace_{k = 1}^\infty \in \Uc.	
\end{equation*}
Then since for all $k \in \Nb$ we have $\Rc \phi_k = (I - \Ac_3) \phi_k = 0$, we readily observe that for $\zeta \in \Uc$
\begin{align*}
	\Rc \sigma_1(v) \zeta &= \sum_{k=1}^\infty \zeta_k \Rc \left[ \left( \phi_k \cdot \grad \right) \Ac_3 v  + \alpha_k v + \chi_k \right] = \sum_{k=1}^\infty \zeta_k \left[ \alpha_k \Rc v + \Rc \chi_k \right]\\
	\Ac_2 \sigma_1(v) \zeta &= \sum_{k=1}^\infty \zeta_k \Ac_2 \left[ \left( \phi_k \cdot \grad \right) \Ac_2 v + \alpha_k v + \chi_k \right] = \sum_{k=1}^\infty \zeta_k \left[ \left( \phi_k \cdot \grad \right) \Ac_2 v + \alpha_k \Ac_2 v + \Ac_2 \chi_k \right]
\end{align*}
and thus
\begin{align*}
	\sum_{k = 1}^\infty \vert \Rc \sigma_1(v) e_k \vert^2_{L^6} &= \sum_{k=1}^\infty \vert \alpha_k \Rc v + \Rc \chi_k \vert_{L^6}^2 \leq C \left( \alpha^2 \vert \Rc v \vert_{L^6}^2 + \kappa^2 \right),\\
	\Vert \Ac \sigma_1(v) \Vert^2_{L_2(\Uc, \Vo)} &= \sum_{k=1}^\infty \vert A^{1/2} P_{H_1} \left[ \left( \phi_k \cdot \grad \right) \Ac v + \alpha_k \Ac v + \Ac \chi_k \right] \vert_{H_1}^2\\
	&\leq C \left[ \left( \theta_1^2 + \alpha^2 \right) \Vert \Ac v \Vert_{\Vo}^2 + \theta_0^2 \vert A_S \Ac v \vert_{\Ho}^2 + \kappa^2 \right],
\end{align*}
which immediately gives \eqref{eq:sigma.rc.vt}. We observe that choosing $\alpha$, $\theta_0$ and $\theta_1$ sufficiently small, \eqref{eq:sigma.grad.ac}, then the condition for $\eta_2$ in \eqref{eq:small.constants.global} can be satisfied. Checking that \eqref{eq:sigma.dz} and the rest of \eqref{eq:small.constants.global} can be met by a suitable choice of $\alpha$, $\theta_0$ and $\theta_1$ is straightforward, although in this particular case we have $\eta_3 = 0$.
\end{example}

\section{Existence of maximal solutions}
\label{sect:maximal.solution}

In the whole section we assume that the stochastic basis $\Sc$, the cylindrical Wiener process $W$, the force term $F_U$ and the noise term $\sigma$ are as in Theorem \ref{thm:maximal.existence}.

\subsection{Approximation scheme}

Employing the technique from \cite{debussche2011}, we first focus on following the modified equation
\begin{equation}
	\label{eq:pe.modified}
	dU + [AU + \theta(\Vert U - U_\ast \Vert)B(U, U) + F(U)] \, dt = \sigma(U) \, dW, \qquad U(0) = U_0,
\end{equation}
where $U_\ast$ is the solution of the random linear differential equation
\begin{equation}
	\label{eq:linear}
	\tfrac{d}{dt} U_\ast(t) + AU_\ast(t) = 0, \qquad U_\ast(0) = U_0,
\end{equation}
and $\theta \in C^\infty(\Rb)$ is such that, for some $\kappa > 0$ determined later,
\begin{equation}
	\label{eq:theta}
	\mathds{1}_{[-\kappa/2, \kappa/2]} \leq \theta \leq \mathds{1}_{[-\kappa, \kappa]}.
\end{equation}
Similarly as in \cite{debussche2011}, we could state the result in an abstract setting. For the existence of a local martingale solution, it would be sufficient to assume that $F$ satisfies only the linear growth assumption \eqref{eq:F.bounded}.

The Galerkin approximations of the modified equation  \eqref{eq:pe.modified} solve the equation
\begin{gather}
	\label{eq:galerkin}
	dU^n + \left[AU^n + \theta\left(\Vert U^n - U^n_\ast \Vert\right) B^n\left(U^n\right) + F^n\left(U^n\right)\right] \, dt = \sigma^n\left(U^n\right) \, dW,\\
	U^n(0) = P_n U_0 \equiv U^n_0,
\end{gather}
where $U^n_\ast$ is the solution of the linear equation
\begin{equation}
	\label{eq:linear.n}
	\tfrac{d}{dt}U^n_\ast(t) + AU^n_\ast(t) = 0, \qquad U^n_\ast(0) = U^n_0.
\end{equation}
and
\[
	F^n = P_n F, \qquad B^n = P_n B, \qquad \sigma^n = P_n \sigma.
\]
Since the equation \eqref{eq:galerkin} is as an SDE in a finite dimensional Hilbert space and all the non-linear terms in \eqref{eq:galerkin} are locally Lipschitz, local existence and uniqueness of $U^n$ follow by a standard argument. The fact that the solutions are global will follow from estimates from Lemma \ref{lemma:approximation.estimates}. The linear equation \eqref{eq:linear.n} is well-posed, see e.g.\ \cite{lions1972}, and the solution $\una$ satisfies for $p \geq 2$
\begin{gather}
	\nonumber
	\vert \una \vert_{W^{1, 2}\left(0, t; H\right)}^2 \leq C \Vert U_0 \Vert^2,\\
	\label{eq:una.estimate1}
	\sup_{s \in \left[0, t\right]} \Vert \una \Vert^p + \int_0^t \vert A\una \vert^2 \Vert \una \Vert^{p-2} \, ds + \left( \int_0^t |A \una|^2 \, ds \right)^{p/2} \leq C \Vert U_0 \Vert^p,
\end{gather}
where the constants are independent of $n \in \Nb$ and therefore has the regularity
\[
	\una \in C\left([0, t]; V\right) \cap L^2\left(0, t; D(A)\right), \quad \tfrac{d}{dt} \una \in L^2\left(0, t; H \right), \qquad t > 0.
\]

In the next lemma we will establish the boundedness of the Galerkin approximations $U^n$. The lemma and its proof are similar to \cite[Lemma 3.1]{debussche2011}. We include a full proof to demonstrate the dependence on $\eta_1$ in \eqref{eq:small.constants.maximal}.

\begin{lemma}
\label{lemma:approximation.estimates}
Let $t > 0$, $p \geq 2$, $\alpha \in [0, 1/2)$ and let $U_0 \in L^q\left( \Omega, V \right)$ be an $\Fc_0$-measurable random variable with $q \geq \max \lbrace 2p, 4 \rbrace$. Let $F$ satisfy \eqref{eq:F.bounded} and let $\sigma$ satisfy the Hypothesis $H_p$, see \eqref{eq:small.constants.maximal}. Then there exist $\kappa > 0$, see \eqref{eq:theta}, and $K > 0$ such that for all $n \in \Nb$ we have
\begin{gather}
	\label{eq:un.Vp}
	\Eb \left[ \sup_{s \in \left[0, t \right]} \Vert U^n \Vert^p + \int_0^t \vert A U^n \vert^2 \Vert U^n \Vert^{p-2} \, ds + \left( \int_0^t \vert A U^n \vert^2 \, ds \right)^{p/2}  \right] \leq K,\\
	\label{eq:un.fract}
	\Eb \left| \int_0^\cdot \sigma^n(U^n) \, dW \right|^p_{W^{\alpha, p}(0, t; H)} \leq K.
\end{gather}
Moreover, if $p \geq 4$ then also for all $n \in \Nb$
\begin{equation}
	\label{eq:un.diff.est}
	\Eb\left| U^n(\cdot) - \int_0^\cdot \sigma^n(U^n) \, dW \right|^2_{W^{1, 2}\left(0, t; H\right)} \leq K.
\end{equation}
\end{lemma}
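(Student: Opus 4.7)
The plan is to apply the finite-dimensional Itô formula to $\Vert U^n\Vert^p = \bigl((AU^n,U^n)_H\bigr)^{p/2}$, exploit the dissipation from $-AU^n$ to absorb the stochastic contributions, and control the cut-off bilinear term via the support of $\theta$. Substituting \eqref{eq:galerkin} produces the identity
\begin{equation*}
\Vert U^n(t)\Vert^p + p\int_0^t \Vert U^n\Vert^{p-2}|AU^n|^2\,ds = \Vert U^n_0\Vert^p + I_B(t) + I_F(t) + I_\sigma(t) + M(t),
\end{equation*}
where $I_B = -p\int_0^t \theta\Vert U^n\Vert^{p-2} b(U^n,U^n,AU^n)\,ds$, $I_F$ the analogous contribution from $F(U^n)$, $I_\sigma$ the combined Itô correction from $\tfrac{p}{2}\Vert U^n\Vert^{p-2}\Vert\sigma^n\Vert^2_{L_2(\Uc,V)}$ and from the quadratic variation term $\tfrac{p(p-2)}{2}\Vert U^n\Vert^{p-4}\sum_k (U^n,\sigma^n e_k)^2_V$, and $M(t)=p\int_0^t\Vert U^n\Vert^{p-2}(U^n,\sigma^n(U^n)\,dW)_V$.

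The crucial step — and the reason the explicit threshold on $\eta_1$ appears — is the bookkeeping of the $\Vert U^n\Vert^{p-2}|AU^n|^2$-contributions on the right. Using \eqref{eq:sigma.bnd.V} inside $I_\sigma$, together with $\sum_k (U^n,\sigma^n e_k)^2_V \leq \Vert U^n\Vert^2 \Vert\sigma^n\Vert^2_{L_2(\Uc,V)}$, yields aggregate $\tfrac{p(p-1)\eta_1}{2}\Vert U^n\Vert^{p-2}|AU^n|^2$; applying BDG \eqref{eq:bdg} to $\sup_s |M(s)|$ followed by Young's inequality produces a further $\tfrac{C^2_{BDG}p^2\eta_1}{2}\Vert U^n\Vert^{p-2}|AU^n|^2$ after absorbing the resulting $\tfrac{1}{2}\Eb\sup_s\Vert U^n\Vert^p$ into the left. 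The assumption $\eta_1 < [p(1+C^2_{BDG})-1]^{-1}$ in \eqref{eq:small.constants.maximal} is precisely what keeps the combined coefficient strictly below $p$, leaving a margin into which the bilinear and drift contributions will also be absorbed.

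For the nonlinearity, \eqref{eq:b.estimate2} with $\Vert\cdot\Vert_{H^2}\simeq|A\cdot|$ gives $|b(U^n,U^n,AU^n)| \leq c_b\Vert U^n\Vert|AU^n|^2$; on the support of $\theta$ one has $\Vert U^n\Vert \leq \kappa + \Vert\una\Vert$, so fixing $\kappa$ small enough absorbs the $\kappa$-part into the dissipation, while the $\Vert\una\Vert$-part acts as a random Gronwall coefficient $R=\Vert\una\Vert^2$ with $\int_0^t R\,ds \leq C\Vert U_0\Vert^2$ by \eqref{eq:una.estimate1}, to which Proposition~\ref{prop:stochastic.gronwall} applies directly. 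The drift $I_F$ is disposed of by \eqref{eq:F.bounded} and Young. Combined with $U_0 \in L^q(\Omega,V)$ for $q \geq \max\{2p,4\}$, which supplies the moments needed to neutralise the random Gronwall coefficient, one obtains \eqref{eq:un.Vp}.

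The remaining two estimates follow routinely from \eqref{eq:un.Vp}. Estimate \eqref{eq:un.fract} is immediate from the fractional BDG inequality \eqref{eq:bdg.frac} applied to $\sigma^n(U^n)$, combined with \eqref{eq:sigma.bnd.V} and the moment bound on $|AU^n|$ provided by \eqref{eq:un.Vp}. For \eqref{eq:un.diff.est}, the process $U^n - \int_0^\cdot \sigma^n(U^n)\,dW$ is pathwise absolutely continuous in $H$ with derivative $-AU^n - \theta(\Vert U^n-\una\Vert)B(U^n) - F(U^n)$; bounding each term in $L^2(\Omega\times(0,t); H)$ via \eqref{eq:un.Vp}, \eqref{eq:b.estimate2} (so $|B(U^n)|^2 \leq c_b^2\Vert U^n\Vert^2|AU^n|^2$, where $p \geq 4$ enters through Cauchy--Schwarz in $\Omega$ applied to $\Eb\int_0^t \Vert U^n\Vert^2|AU^n|^2\,ds$) and \eqref{eq:F.bounded} closes the argument. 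The main obstacle is the second-paragraph bookkeeping: three independent $|AU^n|^2$-sources (the direct Itô correction, the quadratic-variation term, and the BDG-bound of the martingale) must be collected exactly so that their combined $\eta_1$-coefficient matches the threshold in \eqref{eq:small.constants.maximal}; the remainder is standard Gronwall-type manipulation.
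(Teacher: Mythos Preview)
There is a genuine gap in your handling of the bilinear term. Applying It\^{o} directly to $\Vert U^n\Vert^p$ produces, after \eqref{eq:b.estimate2}, the contribution $p\,c_b\,\theta\,\Vert U^n\Vert^{p-1}|AU^n|^2$. On the support of $\theta$ you correctly note $\Vert U^n\Vert\leq \kappa+\Vert\una\Vert$, and the $\kappa$-part is absorbed. But the remaining piece is
\[
p\,c_b\,\Vert\una\Vert\cdot\Vert U^n\Vert^{p-2}|AU^n|^2,
\]
which is a random multiple of the \emph{dissipation} $Y=\Vert U^n\Vert^{p-2}|AU^n|^2$, not of the energy $X=\Vert U^n\Vert^p$. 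Proposition~\ref{prop:stochastic.gronwall} requires the right-hand side to have the form $X(\tau_a)+\int(RX+Z)\,ds$, with $Y$ appearing only on the left; a term $a\cdot Y$ with $a$ not uniformly small cannot be brought into this form by Young's inequality. Since $\Vert\una\Vert$ is bounded only by the random quantity $\Vert U_0\Vert$, this coefficient cannot be made small and the stochastic Gronwall lemma does not apply here.

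This is exactly why the paper works with the shifted variable $\bun=U^n-\una$ and applies It\^{o} to $\Vert\bun\Vert^p$. The cut-off then reads $\theta(\Vert\bun\Vert)$, so $\Vert\bun\Vert\leq\kappa$ \emph{uniformly} on its support. Expanding $B(\bun+\una)$ bilinearly, the pure $B(\bun)$ term gives $p\,c_b\,\kappa\,\Vert\bun\Vert^{p-2}|A\bun|^2$, absorbed by choosing $\kappa$ small; the three cross terms are handled via Young with the residual data $\Vert\una\Vert^2|A\una|^2$ placed in the $Z$-slot, controlled by \eqref{eq:una.estimate1}. Only a constant then multiplies $\Vert\bun\Vert^p$ on the right, so the \emph{standard} Gronwall lemma suffices --- no stochastic Gronwall is needed at this stage.

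A second omission: you do not address the third piece of \eqref{eq:un.Vp}, namely $\Eb\bigl(\int_0^t|AU^n|^2\,ds\bigr)^{p/2}$. This does not follow from the $\Vert\cdot\Vert^p$-It\^{o} identity; the paper obtains it by returning to the $p=2$ identity, raising to the power $p/2$, and estimating the resulting five terms separately. The second threshold $\eta_1<10^{2/p-1}$ in Hypothesis~$H_p$ enters precisely here (via the factor $5^{(p-2)/2}\cdot 2^{p-1}\eta_1^{p/2}$ in front of $(\int|A\bun|^2)^{p/2}$), and without it this step does not close.

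Finally, for \eqref{eq:un.fract} you should invoke \eqref{eq:sigma.bnd.H} (growth in $L_2(\Uc,H)$) together with the bound on $\Vert U^n\Vert^p$, rather than \eqref{eq:sigma.bnd.V} and $|AU^n|$; the fractional BDG inequality is applied in $H$, not $V$.
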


\begin{proof}
Let $\bun = U^n - \una$ and therefore $\bun$ satisfies the following SDE
\begin{gather*}
	d\bun + \left[A\bun + \theta\left(\Vert \bun \Vert\right)B^n\left(\bun + \una\right) + F^n\left(\bun+\una\right)\right] \, dt = \sigma^n\left(\bun+\una\right) \, dW,\\
	\bun(0) = 0.
\end{gather*}

First, for $p \geq 2$ let us prove that there exists $C > 0$ such that for all $n \in \Nb$
\begin{equation}
	\label{eq:bun.estimate}
	\Eb \left[ \sup_{s \in \left[0, t \right]} \Vert \bun \Vert^p + \int_0^t \Vert \bun \Vert^{p-2} \vert A \bun \vert^2 \, ds \right] \leq C \left( 1 + \Eb \Vert U_0 \Vert^{\max \lbrace p, 4 \rbrace} \right).
\end{equation}
Recalling that $A$ is self-adjoint, from the finite dimensional It\^{o} Lemma we infer that
\begin{align}
	\nonumber
	d\Vert& \bun \Vert^p + p \Vert \bun \Vert^{p-2} \vert A\bun \vert^2 \, dt\\
	\nonumber
	&= -p\Vert \bun \Vert^{p-2} \left(A \bun,  \theta\left(\Vert \bun \Vert\right) B^n\left(\bun + \una\right)\right) \, dt\\
	\nonumber
	&\hphantom{=\ } -p\Vert \bun \Vert^{p-2} \left(A\bun, F^n\left(\bun + \una\right)\right) \, dt + p\Vert \bun \Vert^{p-2} \left(A^{1/2} \bun, A^{1/2}\sigma\left(\bun+\una\right) \, dW\right)\\
	\nonumber
	&\hphantom{=\ } + \tfrac{p}{2} \Vert \bun \Vert^{p-2} \Vert\sigma^n\left(\bun + \una\right) \Vert^2_{L_2\left(\Uc, V\right)} \, dt\\
	\nonumber
	&\hphantom{=\ } + \tfrac{p(p-2)}{2}\Vert \bun \Vert^{p-4} \tr \left[ \left(A^{1/2} \bun \otimes A^{1/2} \bun\right) \left(A^{1/2} \sigma^n\left(\bun + \una\right)\right) \left(A^{1/2} \sigma^n\left(\bun + \una\right)\right)^\ast \right] \, dt\\
	\label{eq:difference.ito}
	&= J^p_1 \, dt + J^p_2 \, dt + J^p_3 \, dW + J^p_4 \, dt + J^p_5 \, dt.
\end{align}
To estimate $J^p_1$, we first use the bilinearity of $B$ to get
\begin{align*}
	\left|J^p_1 \right| &= p \Vert \bun \Vert^{p-2} \left| \left(A\bun, \theta\left(\Vert \bun \Vert\right) \left[B^n\left(\una\right) +  B^n\left(\bun, \una\right) + B^n\left(\una, \bun\right) + B^n\left(\bun\right)\right]\right) \right|\\
	&\leq J^p_{1, 1} + J^p_{1, 2} + J^p_{1, 3} + J^p_{1, 4}.
\end{align*}
By the bound \eqref{eq:b.estimate2} on $B$ , the Young inequality and the property \eqref{eq:theta} of the function $\theta$ we have
\begin{align}
	\nonumber
	J^p_{1, 1} &\leq p c_b \Vert \bun \Vert^{p-2} \theta\left(\Vert \bun \Vert\right) \Vert \una \Vert \vert A\una \vert \ \vert A \bun \vert\\
	\nonumber
	&\leq \tfrac{p\varepsilon}{4} \Vert \bun \Vert^{p-2} \vert A\bun \vert^2 + C_\varepsilon \theta\left(\Vert \bun \Vert\right)^2 \Vert \bun \Vert^{p-2} \Vert \una \Vert^2 \vert A \una \vert^2\\
	\label{eq:Jp11}
	&\leq \tfrac{p\varepsilon}{4} \Vert \bun \Vert^{p-2} \vert A\bun \vert^2 + C_\varepsilon \kappa^{p-2} \Vert \una \Vert^2 \vert A\una \vert^2
\end{align}
for some $\varepsilon > 0$ small determined later. We deal with $J^p_{1, 2}$ and $J^p_{1, 3}$ in a similar way by estimating
\begin{align}
	\nonumber
	J^p_{1, 2} + J^p_{1, 3} &\leq 2 p c_b \theta\left(\Vert \bun \Vert\right) \Vert \bun \Vert^{p-2} \Vert \bun \Vert^{1/2} \vert A \bun \vert^{3/2} \Vert \una \Vert^{1/2} \vert A \una \vert^{1/2}\\
	\nonumber
	&\leq \tfrac{p\varepsilon}{4} \Vert \bun \Vert^{p-2} \vert A \bun \vert^2 + C_\varepsilon \theta\left(\Vert \bun \Vert\right)^4 \Vert \bun \Vert^p \Vert \una \Vert^2 \vert A \una \vert^2\\
	&\leq \tfrac{p\varepsilon}{4} \Vert \bun \Vert^{p-2} \vert A \bun \vert^2 + C_\varepsilon \kappa^p \Vert \una \Vert^2 \vert A \una \vert^2.
\end{align}
Using the estimate \eqref{eq:b.estimate1} on $B$ and the property \eqref{eq:theta} of the function $\theta$ we deduce
\begin{equation}
	\label{eq:Jp14}
	J^p_{1, 4} \leq p c_b \theta\left(\Vert \bun \Vert\right) \Vert \bun \Vert^{p-1} \vert A \bun \vert^2 \leq p c_b \kappa \Vert \bun \Vert^{p-2} \vert A\bun \vert^2.
\end{equation}
Collecting the estimates \eqref{eq:Jp11}--\eqref{eq:Jp14} we get
\begin{equation}
	\label{eq:Jp1}
	\left| J^p_1 \right| \leq \left(\tfrac{p\varepsilon}{2} + p c_b \kappa\right)\Vert \bun \Vert^{p-2} \vert A \bun \vert^2 + C_\varepsilon\kappa^p \Vert \una \Vert^2 \vert A \una \vert^2.
\end{equation}
Concerning $J^p_2$, employing the growth assumption \eqref{eq:F.bounded} on $F$ and the Young inequality we infer
\begin{align}
	\nonumber
	\int_0^t \left| J^p_2 \right| \, ds &\leq p C \int_0^t \Vert \bun \Vert^{p-2} \vert A \bun \vert \left(1+ \Vert\bun \Vert + \Vert \una \Vert \right) \, ds\\
	\label{eq:Jp2}
	&\leq \frac{p\varepsilon}{2} \int_0^t \vert A \bun \vert^2 \Vert \bun \Vert^{p-2} \, ds + C_\varepsilon \int_0^t 1 + \Vert \bun \Vert^p + \Vert \una \Vert^p \, ds.
\end{align}
We estimate $J^p_4$ and $J^p_5$ together by the assumption \eqref{eq:sigma.bnd.V} on the growth of $\sigma(U)$ in $L_2\left( \Uc, V \right)$ and the Young inequality. We obtain
\begin{align}
	\nonumber
	\int_0^t \left| J^p_4 + J^p_5 \right| \, ds &\leq \frac{p(p-1)}{2} \int_0^t \Vert \bun \Vert^{p-2} \Vert \sigma^n\left(\bun + \una\right) \Vert^2_{L_2\left(\Uc, V\right)} \, ds\\
	\nonumber
	&\leq \frac{p(p-1)}{2} \int_0^t \Vert \bun \Vert^{p-2} \left[ C \left( 1 + \Vert \bun + \una \Vert^2 \right) + \eta_1 \vert A\bun + A \una \vert^2 \right] \, ds\\
	\nonumber
	&\leq C_{\varepsilon}\left( \int_0^t 1 + \Vert \bun \Vert^p + \Vert \una \Vert^p \, ds \right) + p(p-1) \eta_1 \int_0^t \Vert \bun \Vert^{p-2} \vert A \bun \vert^2 \, ds\\
	&\hphantom{\leq\ } + \frac{\varepsilon}{3} \sup_{s \in \left[0, t\right]} \Vert \bun \Vert^p + C_\varepsilon \left( \int_0^t \vert A \una \vert^2 \, ds \right)^{p/2}
\end{align}
for some $\varepsilon > 0$ precisely determined later. The stochastic term is dealt with using the Burkholder-Davis-Gundy inequality \eqref{eq:bdg}, the Young inequality and the bound \eqref{eq:sigma.bnd.V} on the growth of $\sigma(U)$ in $L_2\left( \Uc, V \right)$. For a similar kind of argument see \cite[Theorem 1.1]{brzezniak2000}. We deduce
\begin{align*}
	\Eb \sup_{s \in \left[0, t \right]} \left| \int_0^s J^p_3 \, dW \right| &\leq p C_{BDG} \Eb \left( \int_0^t \Vert \bun \Vert^{2(p-1)} \Vert \sigma\left(\bun + \una\right) \Vert_{L_2\left(\Uc, V\right)}^2 \, ds \right)^{1/2}\\
	&\leq C \Eb \left( \int_0^t \Vert \bun \Vert^{2(p-1)} \left(1 + \Vert \bun \Vert^2 + \Vert \una \Vert^2 \right) \, ds \right)^{1/2}\\
	&\hphantom{\leq \ } + p C_{BDG} \sqrt{\eta_1} \Eb \left( \int_0^t \Vert \bun \Vert^{2(p-1)} \vert A\bun + A \una \vert^2 \, ds \right)^{1/2}\\
	&= \Eb J_{3, 1}^p + \Eb J_{3, 2}^2.
\end{align*}
By the Young inequality it is straightforward to obtain
\begin{equation}
	\Eb J_{3, 1}^p \leq C_\varepsilon \Eb \left[ \int_0^t 1 + \Vert \bun \Vert^p \, ds + \sup_{s \in [0, t]} \Vert \una \Vert^p \right] + \frac{\varepsilon}{3} \Eb \sup_{s \in \left[0, t \right]} \Vert \bun \Vert^p.
\end{equation}
Regarding $J_{3, 2}^p$ we again use the Young inequality and get
\begin{equation*}
	\begin{split}	
		\Eb J_{3, 2}^p &\leq p C_{BDG} \sqrt{2 \eta_1} \Eb \left( \int_0^t \Vert \bun \Vert^{2(p-1)} \left( \vert A \una \vert^2 + \vert A \bun \vert^2 \right) \, ds \right)^{1/2}\\
		&\leq \frac{\varepsilon}{3} \Eb \sup_{s \in \left[0, t\right]} \Vert \bun \Vert^p + C_\varepsilon \Eb \left( \int_0^t \vert A \una \vert^2 \, ds \right)^{p/2}\\
		&\hphantom{\leq \ } + \frac{p C_{BDG} \sqrt{2 \eta_1} \hat{\varepsilon}}{2} \Eb \sup_{s \in [0, t]} \Vert \bun \Vert^p + \frac{p C_{BDG} \sqrt{2 \eta_1}}{2 \hat{\varepsilon}} \Eb \int_0^t \Vert \bun \Vert^p \vert A \bun \vert^2 \, ds
	\end{split}
\end{equation*}
for some $\hat{\varepsilon} > 0$ small determined later. Denoting
\[
	\frac{p C_{BDG}\sqrt{2 \eta_1} \hat{\varepsilon}}{2} = 1 - \delta
\]
for some $\delta \in (\varepsilon, 1)$ precisely determined later we rewrite the above as
\begin{equation}
	\label{eq:Jp32}
	\begin{split}
		\Eb J^p_{3, 2} &\leq \left( \frac{\varepsilon}{3} + 1 - \delta \right) \Eb \sup_{s \in \left[0, t \right]} \Vert \bun \Vert^p + C_\varepsilon \Eb \left( \int_0^t \vert A \una \vert^2 \, ds \right)^{p/2}\\
		&\hphantom{\leq \ }  + \frac{p^2 C_{BDG}^2 \eta_1}{1-\delta} \Eb \int_0^t \Vert \bun \Vert^{p-2} \vert A \bun \vert^2 \, ds.
	\end{split}
\end{equation}
Collecting the estimates \eqref{eq:Jp1}-\eqref{eq:Jp32}, we integrate the equation \eqref{eq:difference.ito} in time and apply the expected value to get
\begin{multline*}
	\left( \delta - \varepsilon \right) \Eb  \sup_{s \in \left[0, t \right]} \Vert \bun \Vert^p\\
	+ p\left(1 - \varepsilon - c_b \kappa - (p-1) \eta_1 - \frac{p C_{BDG}^2 \eta_1}{1-\delta} \right) \Eb \int_0^t \vert A \bun \vert^2 \Vert \bun \Vert^{p-2} \, ds\\
	\leq C \Eb \left[ \Vert U_0^n \Vert^p + \sup_{s \in \left[0, t \right]} \Vert \una \Vert^p + \int_0^t 1 + \Vert \bun \Vert^p + \Vert \una \Vert^2 \vert A \una \vert^2 \, ds + \left( \int_0^t \vert A \una \vert^2 \, ds \right)^{p/2} \right].
\end{multline*}
Recalling that $\sigma$ satisfies Hypothesis $H_p$, see \eqref{eq:small.constants.maximal}, and the estimate \eqref{eq:una.estimate1} on $\una$ holds, we choose $\delta, \kappa = \kappa_1,\varepsilon > 0$ sufficiently small (in this order) and invoke the standard Gronwall Lemma to obtain
\begin{align*}
	\Eb \bigg[ &\sup_{s \in \left[0, t\right]} \Vert \bun \Vert^p + \int_0^t \Vert \bun \Vert^{p-2} \vert A \bun \vert^2 \, ds \bigg]\\
	&\leq C \Eb \left[ 1 + \Vert U^n_0 \Vert^p + \sup_{s \in \left[0, t\right]} \Vert \una \Vert^p + \int_0^t \Vert \una \Vert^2 \vert A\una \vert^2 \, ds + \left( \int_0^t \vert A \una \vert^2 \, ds \right)^{p/2} \right]\\
	&\leq C\Eb \left[ 1 + \Vert U_0 \Vert^{\max \lbrace p, 4 \rbrace} \right],
\end{align*}
which finishes the proof of \eqref{eq:bun.estimate}.

Next, we want to prove that for $p \geq 2$ there exists $C > 0$ such that for all $n \in \Nb$
\begin{equation}
	\label{eq:bun.estimate.exp}
	\Eb \left( \int_0^t \vert A \bun \vert^2 \, ds \right)^{p/2} \leq C \Eb \left[ 1 + \Vert U_0 \Vert^{\max \lbrace p, 4 \rbrace} + \Vert U_0 \Vert^{2p} \right].
\end{equation}
Returning to \eqref{eq:difference.ito} with $p=2$, we integrate in time and apply the expected value to get
\begin{align}
	\nonumber
	\Eb \left( \int_0^t \vert A \bun \vert^2 \, ds \right)^{p/2} &\leq 5^{(p-2)/2} \Eb \Bigg[ \Vert \bun \Vert^p + \left( \int_0^t \left| \theta\left(\Vert \bun \Vert\right) \left(B^n\left(\bun + \una\right), A \bun\right) \right| \, ds \right)^{p/2}\\
	\nonumber
	&\hphantom{\qquad}+\left( \int_0^t \left| \left( F\left(\bun + \una\right), A \bun \right) \right| \, ds \right)^{p/2} + \sup_{s \in [0, t]} \left| \int_0^s J^2_3 \, dW \right|^{p/2}\\
	\nonumber
	&\hphantom{\qquad}+ \left( \int_0^t \tfrac12 \Vert \sigma_n\left(\bun + \una\right) \Vert^2_{L_2\left(\Uc, V\right)} \, ds \right)^{p/2} \Bigg]\\
	\label{eq:AU2.1}
	&= 5^{(p-2)/2} \Eb \left[ \Vert \bun_0 \Vert^p + I_1 + I_2 + I_3 + I_4 \right].
\end{align}
Using the previously established bound \eqref{eq:Jp1} we get
\begin{equation}
	|I_1| \leq 2^{(p-2)/2} \left(\varepsilon + c_b \kappa\right)^{p/2} \left( \int_0^t \vert A \bun \vert^2 \, ds \right)^{p/2} + C_{\varepsilon} \left( \int_0^t \Vert \una \Vert^2 \vert A \una \vert^2 \, ds \right)^{p/2}
\end{equation}
for some $\varepsilon > 0$ precisely determined later. Similarly to \eqref{eq:Jp2} we estimate

\begin{equation}
	|I_2| \leq \frac{\varepsilon}{2} \left( \int_0^t \vert A \bun \vert^2 \, ds \right)^{p/2} + C_{\varepsilon} \left( 1 + \int_0^t \Vert \bun \Vert^p + \Vert \una \Vert^p \, ds \right).
\end{equation}
By the assumption \eqref{eq:sigma.bnd.V} on the $L_2\left( \Uc, V\right)$ norm of $\sigma(U)$ we have
\begin{align}
	\nonumber
	|I_4| &\leq 2^{(p-2)/2} \left| \int_0^t C \left(1 + \Vert \bun \Vert^p + \Vert \una \Vert^p \right) + 2 \eta_1 \vert A \una \vert^2 \, ds \right|^{p/2}\\
	\nonumber
	&\hphantom{\leq \ } + 2^{(p-2)/2} \left| \int_0^t 2 \eta_1 \vert A \bun \vert^2 \, ds \right|^{p/2}\\
	&\leq C \left[ \int_0^t 1 + \Vert \bun \Vert^p + \Vert \una \Vert^p \, ds + \left( \int_0^t \vert A \una \vert^2 \, ds \right)^{p/2} \right]+ 2^{p-1} \eta_1 \left( \int_0^t \vert A \bun \vert^2 \, ds \right)^{p/2}.
\end{align}
Regarding the stochastic term $I_3$, we use the Burkholder-Davis inequality \eqref{eq:bdg}, the Young inequality and the bound \eqref{eq:sigma.bnd.V} again to obtain the estimate
\begin{align}
	\nonumber
	\Eb &\sup_{s \in \left[0, t\right]} \left| \int_0^s J^2_3 \, dW \right|^{p/2} \leq C_{BDG, p/2} \Eb \left( \int_0^t \Vert \bun \Vert^2 \Vert \sigma^n\left(\bun + \una\right) \Vert^{2}_{L_2\left(\Uc, V\right)} \, ds \right)^{p/4}\\
	\nonumber
	&\leq 2^{p/2} C_{BDG, p/2} \Eb \left( \int_0^t \Vert \bun \Vert^p \left( C\left(1 + \Vert \bun \Vert^2 + \Vert \una \Vert^2 \right) + 2 \eta_1 \left( \vert A \una \vert^2 + \vert A \bun \vert^2 \right) \right) \, ds \right)^{p/4}\\
	\label{eq:J2_3}
	&\leq \frac{\varepsilon}{2} \Eb \left( \int_0^t \vert A \bun \vert^2 \, ds \right)^{p/2} + C_{\varepsilon} \Eb \left[ 1 + \sup_{s \in \left[0, t \right]} \Vert \bun \Vert^{p} + \sup_{s \in [0, t]} \Vert \una \Vert^{p} + \left( \int_0^t \vert A \una \vert^2 \, ds \right)^{p/2} \right].
\end{align}
More precisely, if $p < 4$ we use the fact that the concave function $x \to x^{p/4}$ is sublinear, otherwise we employ discrete H\"{o}lder's inequality. Collecting the estimates \eqref{eq:AU2.1}-\eqref{eq:J2_3} we obtain
\begin{multline*}
	2^{p/2} \Eb \left( \int_0^t \vert A \bun \vert^2 \, ds \right)^{p/2} \leq 5^{(p-2)/2} \Eb \Vert \bun(0) \Vert^p\\
	+ 5^{(p-2)/2} \left( 2^{(p-2)/2} \left( \varepsilon + c_b \kappa \right)^{p/2} + \varepsilon + 2^{p-1} \eta_1^{p/2} \right) \Eb \left( \int_0^t \vert \bun \vert^2 \, ds \right)^{p/2}\\
	+ C_\varepsilon \Eb \left[ 1 + \sup_{s \in \left[0, t\right]} \Vert \bun \Vert^p + \sup_{s \in [0, t]} \Vert \una \Vert^p + \left( \int_0^t \Vert \una \Vert^2 \vert A \una \vert^2 \, ds \right)^{p/2} + \left( \int_0^t \vert A \una \vert^2 \, ds \right)^{p/2}\right].
\end{multline*}
Recalling that $\sigma$ satisfies Hypothesis $H_p$, see \eqref{eq:small.constants.maximal}, and assuming that $\kappa = \kappa_2 > 0$ is sufficiently small, we may choose $\varepsilon > 0$ sufficiently small and invoke the estimate \eqref{eq:una.estimate1} on $\una$ and the previously established bound \eqref{eq:bun.estimate} to prove \eqref{eq:bun.estimate.exp}. Let $\kappa = \kappa_1 \wedge \kappa_2$.

Finally we are ready to prove the original claim \eqref{eq:un.Vp}. Recalling $q \geq \max \lbrace 2p, 4 \rbrace$ we estimate
\begin{align*}
	\Eb \Bigg[ \sup_{s \in \left[0, t \right]} &\Vert U^n \Vert^p + \int_0^t \vert A U^n \vert^2 \Vert U^n \Vert^{p-2} \, ds \Bigg]\\
	&\leq \Eb \left[ \sup_{s \in \left[0, t \right]} \Vert U^n \Vert^p + \left( \sup_{s \in \left[0, t \right]}\Vert U^n \Vert^{p-2} \right) \int_0^t \vert A U^n \vert^2 \, ds \right]\\
	&\leq C \Eb \left[ \sup_{s \in \left[0, t\right]} \Vert U^n \Vert^p + \left( \int_0^t \vert A U^n \vert^2 \, ds \right)^{p/2} \right]\\
	&\leq C \Eb \left[  \sup_{s \in \left[0, t\right]} \Vert \bun \Vert^p + \sup_{s \in \left[0, t\right]} \Vert \una \Vert^p + \left( \int_0^t \vert A \bun \vert^2 \, ds \right)^{p/2} + \left( \int_0^t \vert A \una \vert^2 \, ds \right)^{p/2} \right]\\
	&\leq C \Eb \left[ 1 + \Vert U_0 \Vert^{\max \lbrace p, 4 \rbrace} + \Vert U_0 \Vert^{2p} \right].
\end{align*}

The bound \eqref{eq:un.fract} is proved using the fractional Burkholder-Davis-Gundy inequality \eqref{eq:bdg.frac}, the bound \eqref{eq:sigma.bnd.H} on $\sigma(U)$ in $L_2\left( \Uc, H \right)$ and the estimate \eqref{eq:un.Vp} by
\[
	\Eb \left| \int_0^\cdot \sigma^n\left(U^n\right) \, dW \right|^p_{W^{\alpha, p}\left(0, t; H\right)} \leq C_t \Eb \int_0^t \Vert \sigma^n\left(U^n\right) \Vert^p_{L_2\left(\Uc, H\right)} \, ds \leq  C_t \Eb \left[ 1 + \int_0^t \Vert U^n \Vert^p \, ds \right].
\]

The remaining claim \eqref{eq:un.diff.est} follows from
\[
	U^n(s) - \int_0^s \sigma^n\left(U^n\right) \, dW = U^n(0) - \int_0^s AU^n + \theta\left(\Vert U^n - U^n_\ast \Vert\right) B^n\left(U^n\right) - F^n\left(U^n\right) \, dr,
\]
the bound \eqref{eq:b.estimate2} on $B$ and the assumption \eqref{eq:F.bounded} on $F$ by the estimate
\[
	 \Eb \left\Vert U^n - \int_0^\cdot \sigma^n\left(U^n\right) \, dW \right\Vert^2_{W^{1, 2}\left(0, t; H\right)} \leq C \Eb \left[ 1 + \Vert U^n(0) \Vert^2 + \int_0^t \vert A U^n \vert^2 \left(1 + \Vert U^n \Vert^2\right) \, ds \right],
\]
where the right-hand side is finite by \eqref{eq:un.Vp} with $p = 4$.
\end{proof}

\subsection{Existence of local martingale solutions}
\label{sect:local.martingale.sol}

After obtaining the same bounds on the finite dimensional approximations of \eqref{eq:pe.modified} as in \cite[Lemma 3.1]{debussche2011}, the compactness argument follows similarly. Thus, we concentrate mainly on the differences and omitted parts.

Given an initial distribution $\mu_0$ on $V$, let $U_0$ be an $\Fc_0$-measurable $V$-valued random variable with law $\mu_0$ satisfying the assumptions of Lemma \ref{lemma:approximation.estimates}. Recall that $W$ is an $\Fb$-cylindrical Wiener process with reproducing kernel Hilbert space $\Uc$ and let $\Uc_0$ be an auxiliary Hilbert space such that the embedding $\Uc \hook \Uc_0$ is Hilbert-Schmidt. Let $U^n$ be the solutions to the approximating system \eqref{eq:galerkin} relative to the basis $\Sc$, the cylindrical Wiener process $W$ and the initial condition $U_0$. We define
\begin{equation*}
	\Xc_U = L^2\left(0, t; V\right) \cap C\left(\left[0, t\right]; V'\right), \qquad \Xc_W = C\left(\left[0, t\right]; \Uc_0\right), \qquad \Xc = \Xc_U \times \Xc_W.
\end{equation*}
Let $\mu_U^n$, $\mu^n_W$ and $\mu^n$ be laws of $U^n$, $W$ and $(U^n, W)$ on $\Xc_U$, $\Xc_W$ and $\Xc$, respectively, in other words
\begin{equation}
	\label{eq:mu.measure}
	\mu^n_U(\cdot) = \Pb\left( \left\lbrace U^n \in \cdot \right\rbrace\right), \qquad \mu_W^n(\cdot) = \Pb\left( \left\lbrace W \in \cdot \right\rbrace \right), \qquad \mu^n = \mu_U^n \otimes \mu_W^n.
\end{equation}

The proof of the existence of a global martingale solution to the modified problem \eqref{eq:pe.modified} will be shown once we prove the following two propositions, cf.\ \cite[Proposition 4.1 and Proposition 7.1]{debussche2011}, in the setting of gradient-dependent noise. The first proposition can be proved similarly as in the referenced paper, the second requires a minor modification of the argument.

\begin{proposition}
\label{prop:approximating.sequence}
Let $\mu_0$ be a probability measure on $V$ satisfying \eqref{eq:mu.zero} with $q \geq 8$ and let $(\mu^n)$ be the measures defined in \eqref{eq:mu.measure}. Then there exists a probability space $(\tilde{\Omega}, \tilde{\Fc}, \tilde{\Pb})$, a subsequence $n_k \to \infty$ and a sequence of $\Xc$-valued random variables $(\tilde{U}^{n_k}, \tilde{W}^{n_k})$ such that
\begin{enumerate}
	\item $(\tilde{U}^{n_k}, \tilde{W}^{n_k})$ converges almost surely in $\Xc$ to $(\tilde{U}, \tilde{W}) \in \Xc$,
	\item $\tilde{W}^{n_k}$ is a cylindrical Wiener process with reproducing kernel Hilbert space $\Uc$ adapted to the filtration $\left( \Fc_t^{n_k} \right)_{t \geq 0}$, where $\left( \Fc_t^{n_k} \right)_{t \geq 0}$ is the completion of $\sigma(\tilde{W}^{n_k}, \tilde{U}^{n_k}; s \leq t)$,
	\item each pair $(\tilde{U}^{n_k}, \tilde{W}^{n_k})$ satisfies the equation
	\begin{equation}
	\label{eq:appr.after.skorohod}
		d\tilde{U}^{n_k} + \left[A\tilde{U}^{n_k} + \theta(\Vert \tilde{U}^{n_k} - \tilde{U}^{n_k}_\ast \Vert) B^{n_k}(\tilde{U}^{n_k}) + F^{n_k}(\tilde{U}^{n_k})\right] \, dt = \sigma^{n_k}(\tilde{U}^{n_k}) \, d\tilde{W}^{n_k},
	\end{equation}
	with initial condition
	\[
		\tilde{U}^{n_k}(0) = \tilde{U}^{n_k}_0 := P^{n_k} \tilde{U}_0,
	\]
	where $\tilde{U}^{n_k}_\ast$ is the solution of
	\[
		\tfrac{d}{dt} \tilde{U}^{n_k}_\ast + A \tilde{U}^{n_k}_\ast = 0, \qquad \tilde{U}^{n_k}_\ast(0) = \tilde{U}^{n_k}_0.
	\]
\end{enumerate}
\end{proposition}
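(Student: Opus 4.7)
The plan is to follow the standard stochastic compactness pattern: establish tightness of the laws $(\mu^n)$ on $\Xc$, apply the Skorokhod representation theorem to obtain an almost surely convergent subsequence on a new probability space, and then identify the limiting process as an $\Fc_t^{n_k}$-adapted Wiener process together with a solution of the Galerkin system on the new basis. Lemma \ref{lemma:approximation.estimates}, invoked with $p=4$ (hence the hypothesis $q\geq 8$), supplies exactly the bounds needed for tightness.

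For tightness of $\mu_W^n$ on $\Xc_W$ there is nothing to prove since $\mu_W^n$ does not depend on $n$. For tightness of $\mu_U^n$ on $\Xc_U=L^2(0,t;V)\cap C([0,t];V')$ I would decompose
\[
U^n(\cdot) = U^n_0 - \int_0^\cdot \bigl[AU^n+\theta(\|U^n-U^n_\ast\|)B^n(U^n)+F^n(U^n)\bigr]\,ds + \int_0^\cdot \sigma^n(U^n)\,dW =: N^n+M^n,
\]
observe via \eqref{eq:un.Vp} that $U^n$ is bounded in probability in $L^\infty(0,t;V)\cap L^2(0,t;D(A))$, via \eqref{eq:un.fract} that $M^n$ is bounded in expectation in $W^{\alpha,4}(0,t;H)$ for some $\alpha\in(1/4,1/2)$, and via \eqref{eq:un.diff.est} that $N^n$ is bounded in expectation in $W^{1,2}(0,t;H)$. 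The Aubin--Lions-type compact embeddings
\[
L^2(0,t;D(A))\cap W^{\alpha,4}(0,t;H)\hookrightarrow\hookrightarrow L^2(0,t;V),\qquad W^{\alpha,4}(0,t;H)+W^{1,2}(0,t;H)\hookrightarrow\hookrightarrow C([0,t];V'),
\]
combined with Chebyshev's inequality, yield tightness of $\mu_U^n$ and hence of $\mu^n$ on $\Xc$. The classical Skorokhod theorem then delivers a probability space $(\tilde\Omega,\tilde\Fc,\tilde\Pb)$, a subsequence $n_k$, and random variables $(\tilde U^{n_k},\tilde W^{n_k})$ with the prescribed laws converging almost surely in $\Xc$ to a limit $(\tilde U,\tilde W)$.

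To verify item (ii), I would note that $\tilde W^{n_k}$ has the law of $W$ on $\Xc_W$, so by the L\'evy characterization it is a $\Uc$-cylindrical Wiener process, and adaptedness to $(\Fc_t^{n_k})$ follows because the finite-dimensional distributions of $(\tilde U^{n_k},\tilde W^{n_k})$ coincide with those of $(U^{n_k},W)$, making the increments $\tilde W^{n_k}(t+s)-\tilde W^{n_k}(t)$ independent of $\sigma(\tilde U^{n_k}(r),\tilde W^{n_k}(r):r\leq t)$ by direct transfer from the original space.

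To verify item (iii), since the original pair $(U^{n_k},W)$ satisfies the Galerkin SDE \eqref{eq:galerkin} in the finite-dimensional space $H^{n_k}$, and equality of laws together with the deterministic, continuous dependence of the drift and diffusion coefficients on $U^n$ (on $H^{n_k}$ all operators $A,B^{n_k},F^{n_k},\sigma^{n_k}$ are continuous) transfer the integral identity pathwise to the tilded objects. The main obstacle, compared to the Newtonian-noise case of \cite{debussche2011}, is that here $\sigma$ satisfies only \eqref{eq:sigma.bnd.V} with an $|AU|^2$ term, so the stochastic integral on the original space is not a priori continuous in $U^n$ in any pathwise sense; however, because this step only transfers the already-verified identity \eqref{eq:galerkin} within a fixed finite-dimensional subspace $H^{n_k}$ (in which all norms are equivalent and everything is smooth), the standard argument of \cite[Section 4.5]{debussche2011} applies essentially verbatim, using the uniform bounds \eqref{eq:un.Vp}--\eqref{eq:un.diff.est} to justify the passage from law-equivalence to pathwise equality of the integrated equation. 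No passage to the limit $k\to\infty$ is yet needed at this stage; that is deferred to the subsequent proposition, where the gradient dependence of $\sigma$ will become the genuinely delicate point.
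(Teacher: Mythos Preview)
Your tightness argument and the handling of items (i) and (ii) match the paper essentially verbatim: the same decomposition $U^n=N^n+M^n$, the same compact embeddings (the paper uses $W^{1/4,2}(0,t;H)$ rather than $W^{\alpha,4}(0,t;H)$, but either works), classical Skorokhod, and the transfer of the Wiener property via equality in law.

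The one place where your outline diverges from the paper is item (iii), and here your justification is misleading. You argue that because the Galerkin system lives in the finite-dimensional space $H^{n_k}$, ``all norms are equivalent and everything is smooth,'' so the transfer of the integral identity from $(U^{n_k},W)$ to $(\tilde U^{n_k},\tilde W^{n_k})$ is routine. But the obstruction is not regularity of the coefficients; it is that the It\^o integral $\int_0^s \sigma^{n_k}(u_r)\,dW_r$ is \emph{not} a continuous (or even Borel-measurable) functional of the pair $(u,W)$ in the topology of $\Xc$, regardless of dimension. Equality of laws therefore does not immediately transfer the stochastic integral. The paper addresses this explicitly via the Bensoussan mollification trick: one replaces $\sigma^{n_k}(U^{n_k})$ by its time-mollification $\mathcal K_\varepsilon(\sigma^{n_k}(U^{n_k}))$, integrates the resulting stochastic integral by parts so that it becomes a genuine continuous functional $\phi^{n_k,\varepsilon}(U^{n_k},W)$ on $\Xc$, transfers \emph{that} via equality of laws, and then sends $\varepsilon\to 0$ using the BDG inequality and the $L^2$ convergence of the mollifier. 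Your citation of \cite{debussche2011} is in the right spirit (the paper cites \cite[Lemma~2.1]{debussche2011} and \cite[Section~4.3.4]{bensoussan1995} for this step), but you should be aware that finite-dimensionality alone does not short-circuit the argument.
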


\begin{proof}
Using the estimates established in Lemma \ref{lemma:approximation.estimates} and the compactness of the embeddings
\begin{gather*}
	L^2\left(0, t; D(A) \right) \cap W^{1/4, 2}\left(0, t; H\right) \hook \hook L^2\left(0, t; V \right),\\
	W^{1, 2}\left(0, t; H \right) \hook \hook C\left(\left[0, t\right], V'\right), \qquad W^{\alpha, q}\left(0, t; H \right) \hook \hook C\left(\left[0, t \right], V'\right)
\end{gather*}
for some $\alpha \in \left(1/q, 1/2\right)$, we may repeat the proof of \cite[Lemma 4.1]{debussche2011} to show that the sequence of measures $\lbrace \mu^n \rbrace_{n=1}^\infty$ is tight in $\Xc$. The first assertion then follows immediately by the Skorokhod Theorem, see e.g.\ \cite[Theorem 2.4]{dpz}.

Regarding the second claim, we recall that cylindrical process is fully determined by its law, see e.g.\ \cite[Lemma 2.1.35]{bfh}. To establish that a cylindrical Wiener process is $\left( \Fc_t^{n_k} \right)_{t \geq 0}$-adapted, by e.g.\ \cite[Corollary 2.1.36]{bfh} it suffices to show that the process $W^{n_k}(s+h)-W^{n_k}(s)$ is non-anticipative w.r.t.\ the filtration $\left( \Fc_t^{n_k} \right)_{t \geq 0}$, which again follows from the equality in law.

The final assertion follows using an infinite dimensional version of the mollification method by Bensoussan in \cite[Section 4.3.4]{bensoussan1995}, see also the proof of \cite[Lemma 2.1]{debussche2011}, using estimates for convolution in Banach spaces from e.g.\ \cite[Section 1.2]{hytonen2016}. Let
\begin{equation}
	\label{eq:xn}
	X^{n_k} = \int_0^t \bigg\Vert U^{n_k} + \int_0^s AU^{n_k} +  B^{n_k}\left(U^{n_k}\right) + F^{n_k}\left(U^{n_k}\right) \, dr - U^{n_k}(0) - \int_0^s \sigma^{n_k}\left(U^{n_k}\right) \, dW \bigg\Vert_{V'}^2 \, ds.
\end{equation}
Since $U^{n_k}$ are the solutions of \eqref{eq:galerkin}, clearly
\begin{equation}
	\label{eq:xn.zero.as}
	X^{n_k} = 0 \quad \text{$\Pb$-a.s.} \qquad \text{and thus} \quad \Eb \left[ \frac{X^{n_k}}{1+X^{n_k}} \right] = 0.
\end{equation}
Let $\tilde{X}^{n_k}$ be the analogue of \eqref{eq:xn} with $\left(\tilde{U}^{n_k}, \tilde{W}^{n_k}\right)$ instead of $\left(U^{n_k}, W\right)$, that is
\begin{equation}
	\label{eq:xn.tilde}
	\tilde{X}^{n_k} = \int_0^t \bigg\Vert \tilde{U}^{n_k} + \int_0^s A\tilde{U}^{n_k} + B^{n_k}\left(\tilde{U}^{n_k}\right) + F^{n_k}\left(\tilde{U}^{n_k}\right) \, dr
	- \tilde{U}^{n_k}(0) - \int_0^s \sigma^{n_k}\left(\tilde{U}^{n_k}\right) \, d\tilde{W}^{n_k} \bigg\Vert_{V'}^2 \, ds.
\end{equation}
To prove \eqref{eq:appr.after.skorohod}, it suffices to establish that
\[
	\tilde{\Eb} \left[ \frac{\tilde{X}^{n_k}}{1+\tilde{X}^{n_k}} \right] = 0.
\]
The difficulty arises from the presence of stochastic integral in \eqref{eq:xn.tilde}. For $\varepsilon > 0$ let $K_\varepsilon: \Rb \to [0, \infty)$ be defined by
\[
	K_\varepsilon(r) = \mathds{1}_{[0, \infty)}(r) \frac{\exp\left( -r/\varepsilon\right)}{\varepsilon}, \qquad r \in \Rb.
\]
Let $\Kc_\varepsilon: L^2\left(0, t; L_2\left(\Uc, H\right)\right) \to L^2\left(0, t; L_2\left(\Uc, H\right)\right)$ be the convolution operator
\begin{equation}
	\label{eq:convolution.operator}
	\Kc_\varepsilon(q) = K_\varepsilon \ast q, \qquad q \in L^2\left(0, t; L_2\left(\Uc, H\right)\right).
\end{equation}
By the Young inequality for convolutions in Banach spaces, see e.g.\ \cite[Lemma 1.2.30]{hytonen2016}, we have
\begin{equation}
	\label{eq:est.expected}
	\Eb \Vert \Kc_\varepsilon\left(\sigma^{n_k}\left(U^{n_k}\right)\right) \Vert_{L^2\left(0, t; L_2\left(\Uc, H\right)\right)}^2 \leq \Eb \Vert \sigma^{n_k}\left(U^{n_k}\right) \Vert_{L^2\left(0, t; L_2\left(\Uc, H\right)\right)}^2,
\end{equation}
and from \cite[Proposition 1.2.32]{hytonen2016} we obtain that for fixed $q \in L^2\left(0, T; L_2\left(\Uc, H\right)\right)$ we have
\begin{equation}
	\label{eq:modif.convergence}
	\Kc_\varepsilon(q) \to q \quad \text{in} \quad L^2\left(0, t; L_2\left(\Uc, H\right)\right) \quad \text{as} \quad \varepsilon \to 0+.
\end{equation}

Let $X^{n_k, \varepsilon}, \tilde{X}^{n_k, \varepsilon}$ be the equivalents of $X^{n_k}, \tilde{X}^{n_k}$ with $\Kc_\varepsilon\left(\sigma^{n_k}\left(U^{n_k}\right)\right)$ and $\Kc_\varepsilon(\sigma^{n_k}(\tilde{U}^{n_k}))$ instead of $\sigma^{n_k}\left(U^{n_k}\right)$ and $\sigma^{n_k}(\tilde{U}^{n_k})$, respectively, i.e.
\begin{align}
	\nonumber
	X^{{n_k}, \varepsilon} &= \int_0^t \bigg\Vert U^{n_k} + \int_0^\cdot AU^{n_k} + B^{n_k}\left(U^{n_k}\right) + F^{n_k}\left(U^{n_k}\right) \, dr - U^{n_k}(0)\\
	\label{eq:xne}
	&\hphantom{= \int_0^T \bigg\Vert} - \int_0^\cdot \int_0^r K_\varepsilon(r-u) \sigma^{n_k}\left(U^{n_k}(u)\right) \, du \, dW(r) \bigg\Vert_{V'}^2 \, ds,\\
	\nonumber
	\tilde{X}^{{n_k}, \varepsilon} &= \int_0^t \bigg\Vert \tilde{U}^{n_k} + \int_0^\cdot A\tilde{U}^{n_k} + B^{n_k}(\tilde{U}^{n_k}) + F^{n_k}(\tilde{U}^{n_k}) \, dr - \tilde{U}^{n_k}(0)\\
	\label{eq:xtne}
	&\hphantom{= \int_0^T \bigg\Vert} - \int_0^\cdot \int_0^r K_\varepsilon(r-u) \sigma^{n_k}(\tilde{U}^{n_k}(u)) \, du \, d\tilde{W}^{n_k}(r) \bigg\Vert_{V'}^2 \, ds.
\end{align}
From the definition of the stochastic integral w.r.t.\ a cylindrical Wiener process, the stochastic Fubini theorem, see e.g.\ \cite[Section 4.5]{dpz}, and the stochastic integration by parts
\begin{align*}
	\int_0^s \int_0^r K_\varepsilon(r-u) &\sigma^{n_k}\left(U^{n_k}(u)\right) \, du \, dW_r\\
	&= \sum_{\ell=1}^\infty \sum_{j=1}^{n_k} h_j \left( \int_0^s \int_u^s K_\varepsilon(r-u) \left( \sigma\left(U^{n_k}(u)\right) e_\ell, h_j \right)_H \, dW_r^\ell \, du \right)\\
	&= \sum_{\ell=1}^\infty \sum_{j=1}^{n_k} h_j \bigg( \int_0^s \left( \sigma\left(U^{n_k}(u)\right) e_\ell, h_j \right)_H \bigg[ K_\varepsilon(s-u)W^\ell(s) - W^\ell(u)\\
	&\hphantom{= \sum_{\ell=1}^\infty \sum_{j=1}^{n_k} h_j \bigg( \int_0^s}+ \frac{1}{\varepsilon} \int_u^s W^\ell(r) K_\varepsilon(r-u) \, dr \bigg] \, du \bigg).
\end{align*}
Returning to \eqref{eq:xne} we observe that all the integrals involved are deterministic and therefore there exists a bounded continuous function $\phi^{{n_k}, \varepsilon}$ on $\Xc$ such that
\[
	\frac{X^{{n_k}, \varepsilon}}{1 + X^{{n_k}, \varepsilon}} = \phi^{{n_k}, \varepsilon}\left(U^{n_k}, W\right).
\]
Similarly from \eqref{eq:xtne} we get
\[
	\frac{\tilde{X}^{{n_k}, \varepsilon}}{1 + \tilde{X}^{{n_k}, \varepsilon}} = \phi^{{n_k}, \varepsilon}(\tilde{U}^{n_k}, \tilde{W}^{n_k}).
\]
The remaining part is straightforward. Since $\left(U^{n_k}, W\right)$ and $(\tilde{U}^{n_k}, \tilde{W}^{n_k})$ have the same law, we have
\begin{equation}
	\label{eq:tilde.laws}	
	\widetilde{\Eb} \left[ \frac{\tilde{X}^{{n_k}, \varepsilon}}{1 + \tilde{X}^{{n_k}, \varepsilon}} \right] = \widetilde{\Eb} \left[ \phi^{{n_k}, \varepsilon}(\tilde{U}^{n_k}, \tilde{W}^{n_k}) \right] = \Eb \left[ \phi^{{n_k}, \varepsilon}\left(W, U^{n_k}\right) \right] = \Eb \left[ \frac{X^{{n_k}, \varepsilon}}{1 + X^{{n_k}, \varepsilon}} \right] .
\end{equation}
Recalling that for $h_1$ and $h_2$ from some Hilbert space $\Hc$ the identity
\[
	\vert h_1 \vert_\Hc^2 - \vert h_2 \vert_\Hc^2 = \left( h_1 - h_2, h_1 + h_2 \right)_\Hc
\]
holds,we use the Burkholder-Davis-Gundy inequality \eqref{eq:bdg} and \eqref{eq:est.expected} we get
\begin{align*}
	\Eb \bigg| &\frac{X^{{n_k}, \varepsilon}}{1 + X^{{n_k}, \varepsilon}} - \frac{X^{n_k}}{1 + X^{n_k}} \bigg| \leq \Eb \left| \frac{X^{{n_k}, \varepsilon}}{1 + X^{{n_k}, \varepsilon}} - \frac{X^{n_k}}{1 + X^{{n_k}, \varepsilon}} \right| + \Eb \left| \frac{X^{n_k}}{1 + X^{{n_k}, \varepsilon}} - \frac{X^{n_k}}{1 + X^{n_k}} \right|\\
	&\leq 2 \Eb \left| X^{{n_k}, \varepsilon} - X^{n_k} \right|\\
	&\leq C \Eb \Bigg[ \int_0^t \bigg( \int_0^s \Kc_\varepsilon\left(\sigma^{n_k}\left(U^{n_k}\right)\right) - \sigma^{n_k}\left(U^{n_k}\right) \, dW,\\
	&\hphantom{\leq C \Eb \Bigg[ \int_0^t \bigg( \ } \int_0^s \Kc_\varepsilon\left(\sigma^{n_k}\left(U^{n_k}\right)\right) + \sigma^{n_k}\left(U^{n_k}\right) \, dW \bigg)_{V'} \, ds \Bigg]\\
	&\leq C \left\Vert \Kc_\varepsilon\left( \sigma^{n_k}\left(U^{n_k}\right)\right) - \sigma^{n_k}\left(U^{n_k}\right) \right\Vert^2_{L^2(0, t; L_2(\Uc, H))}.
\end{align*}
In a similar way we may establish the estimate
\[
	\Eb \left| \frac{\tilde{X}^{{n_k}, \varepsilon}}{1 + \tilde{X}^{{n_k}, \varepsilon}} - \frac{\tilde{X}^{n_k}}{1 + \tilde{X}^{n_k}} \right| \leq  C \left\Vert \Kc_\varepsilon\left(\sigma^{n_k}\left(U^{n_k}\right)\right) - \sigma^{n_k}\left(U^{n_k}\right) \right\Vert^2_{L^2\left(0, t; L_2\left(\Uc, H\right)\right)}.
\]
Then by \eqref{eq:xn.zero.as}, \eqref{eq:tilde.laws}, \eqref{eq:modif.convergence} and the above estimates we get
\begin{align*}
	\widetilde{\Eb} \left| \frac{\tilde{X}^{n_k}}{1+\tilde{X}^{n_k}} \right| &\leq \widetilde{\Eb} \left| \frac{\tilde{X}^{n_k}}{1+\tilde{X}^{n_k}}  - \frac{\tilde{X}^{{n_k}, \varepsilon}}{1+\tilde{X}^{{n_k}, \varepsilon}} \right| + \widetilde{\Eb} \left| \frac{\tilde{X}^{{n_k}, \varepsilon}}{1+\tilde{X}^{{n_k}, \varepsilon}} \right|\\
	&\leq \widetilde{\Eb} \left| \frac{\tilde{X}^{n_k}}{1+\tilde{X}^{n_k}}  - \frac{\tilde{X}^{{n_k}, \varepsilon}}{1+\tilde{X}^{{n_k}, \varepsilon}} \right| + \Eb \left| \frac{X^{n_k}}{1+X^{n_k}}  - \frac{X^{{n_k}, \varepsilon}}{1+X^{{n_k}, \varepsilon}} \right|\\
	&\leq C \left\Vert \Kc_\varepsilon\left(\sigma^{n_k}\left(U^{n_k}\right)\right) - \sigma^{n_k}\left(U^{n_k}\right) \right\Vert^2_{L^2\left(0, t; L_2\left(\Uc, H\right)\right)} \to 0, \qquad \varepsilon \to 0+.
\end{align*}
It follows that
\[
	\tilde{U}^{n_k}(s) + \int_0^s A\tilde{U}^{n_k} + B^{n_k}(\tilde{U}^{n_k}) + F^{n_k}(\tilde{U}^{n_k}) \, dr = \tilde{U}^{n_k}(0) + \int_0^s \sigma^{n_k}(\tilde{U}^{n_k}) \, d\tilde{W}^{n_k}	
\]
for almost surely on $\tilde{\Omega} \times [0, t]$. By the continuity of the functions in $V'$ the above stochastic differential equation holds $\tilde{\Pb}$-almost surely.
\end{proof}

\begin{proposition}
\label{prop:global.martingale.existence}
Let $(\tilde{U}^{n_k}, \tilde{W}^{n_k})$ be a sequence of $\Xc$-valued random variables on a probability space $(\tilde{\Omega}, \tilde{\Fc}, \tilde{\Pb})$ such that
\begin{enumerate}
	\item $(\tilde{U}^{n_k}, \tilde{W}^{n_k}) \to (\tilde{U}, \tilde{W})$ in $\Xc$ $\tilde{\Pb}$-a.s., i.e.
	\begin{equation*}
		\tilde{U}^{n_k} \to \tilde{U} \ \text{in} \ L^2\left(0, t; V\right) \cap C\left(\left[0, t \right], V'\right), \ \tilde{W}^{n_k} \to \tilde{W} \ \text{in} \ C\left(\left[0, t\right]; \Uc_0 \right), \quad \Pb\text{-a.s.},
	\end{equation*}
	\item $\tilde{W}^{n_k}$ is a cylindrical Wiener process with reproducing kernel Hilbert space $\Uc$ adapted to the filtration $\left( \Fc_t^{n_k} \right)_{t \geq 0}$ that contains the $\sigma$-algebra $\sigma(\tilde{W}^{n_k}, \tilde{U}^{n_k}; s \leq t)$,
	\item each pair $(\tilde{U}^{n_k}, \tilde{W}^{n_k})$ satisfies \eqref{eq:appr.after.skorohod} with $\tilde{U}_0 \in L^q\left( \Omega, V \right)$ for some $q > 4$.
\end{enumerate}
Let $\tilde{\Fc}_t$ be the completion of $\sigma(\tilde{W}(s), \tilde{U}(s), 0 \leq s \leq t)$ and $\tilde{\Sc} = (\tilde{\Omega}, \tilde{\Fc}, ( \tilde{\Fc}_t )_{t \geq 0}, \tilde{\Pb})$. Then $(\tilde{\Sc}, \tilde{W}, \tilde{U})$ is a global martingale solution to the approximating problem \eqref{eq:pe.modified}.

Moreover, for all $p > 2$ such that $q \geq 2p$ and for all $t \geq 0$ the solution $\tilde{U}$ satisfies
\begin{equation}
	\label{eq:martingale.solution.approx.regularity}
	\tilde{U} \in L^p\left( \Omega, C\left( [0, t], V \right) \right), \qquad \vert A\tilde{U} \vert^2 \Vert \tilde{U} \Vert^{p-2} \in L^1\left( \Omega, L^1\left( 0, t \right) \right).
\end{equation}
\end{proposition}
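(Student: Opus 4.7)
The plan is to pass to the limit $n_k \to \infty$ in \eqref{eq:appr.after.skorohod}, combining the $\tilde{\Pb}$-a.s.\ convergence supplied by Proposition \ref{prop:approximating.sequence} with the uniform estimates from Lemma \ref{lemma:approximation.estimates}. Because $(\tilde{U}^{n_k}, \tilde{W}^{n_k})$ and $(U^{n_k}, W)$ share the same law, the bounds \eqref{eq:un.Vp}--\eqref{eq:un.diff.est} transfer to the tilde variables, so, along a further subsequence, $\tilde{U}^{n_k} \harp \tilde{U}$ weakly in $L^2(\tilde{\Omega}; L^2(0,t; D(A)))$ and weakly-$\ast$ in $L^{q/2}(\tilde{\Omega}; L^\infty(0,t; V))$. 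Lower semicontinuity of norms under these topologies, applied to \eqref{eq:un.Vp}, immediately yields the integrability claim \eqref{eq:martingale.solution.approx.regularity}.

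To verify \eqref{eq:pe.modified} I would test \eqref{eq:appr.after.skorohod} against any $h_j \in D(A)$ and pass to the limit term by term, noting that the Galerkin projections $P_{n_k}$ are harmless for such a test function once $n_k \geq j$. The linear drift $\int_0^s A \tilde{U}^{n_k} \, dr$ passes by the strong convergence $\tilde{U}^{n_k} \to \tilde{U}$ in $L^2(0,t; V)$. The non-linear cutoff $\theta(\Vert \tilde{U}^{n_k} - \tilde{U}^{n_k}_\ast \Vert) B^{n_k}(\tilde{U}^{n_k})$ is handled using strong convergence of $\tilde{U}^{n_k}_\ast$ in $C([0,t]; V)$ (inherited from linearity of \eqref{eq:linear.n}, the estimate \eqref{eq:una.estimate1} and strong convergence $\tilde{U}^{n_k}_0 \to \tilde{U}_0$ in $V$), continuity and uniform boundedness of $\theta$, the bilinear estimates \eqref{eq:b.estimate1}--\eqref{eq:b.estimate2} and the weak compactness in $L^2(0,t; D(A))$. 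The lower-order term $F^{n_k}$ is treated via the Lipschitz bound \eqref{eq:F.lipschitz}.

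The main obstacle is identifying the stochastic integral in the limit. The goal is to establish
\[
	\int_0^{\cdot} \sigma^{n_k}(\tilde{U}^{n_k}) \, d\tilde{W}^{n_k} \longrightarrow \int_0^{\cdot} \sigma(\tilde{U}) \, d\tilde{W} \quad \text{in } L^2(\tilde{\Omega}; C([0,t]; H))
\]
by splitting the difference into $\int_0^\cdot [\sigma^{n_k}(\tilde{U}^{n_k}) - \sigma(\tilde{U})] \, d\tilde{W}^{n_k}$, controlled via the Burkholder-Davis-Gundy inequality \eqref{eq:bdg} together with Lipschitz continuity \eqref{eq:sigma.lip.H} and the strong $L^2(0,t;V)$ convergence, and $\int_0^\cdot \sigma(\tilde{U}) \, d\tilde{W}^{n_k} - \int_0^\cdot \sigma(\tilde{U}) \, d\tilde{W}$, which vanishes thanks to the convergence $\tilde{W}^{n_k} \to \tilde{W}$ in $\Xc_W$ and adaptedness of $\tilde{W}^{n_k}$ to $(\Fc_t^{n_k})$ guaranteed by Proposition \ref{prop:approximating.sequence}; the integrand $\sigma(\tilde{U})$ is adapted to $(\tilde{\Fc}_t)$ by the very definition of that filtration. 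Finally, having obtained the equation with $\tilde{U} \in C([0,t]; V')$, I would upgrade $\tilde{U}$ from $C([0,t]; V')$ to $L^2(\tilde{\Omega}; C([0,t]; V))$ by applying the It\^{o} lemma of the Appendix to $\Vert \tilde{U} \Vert^2 = a(\tilde{U}, \tilde{U})$, which is legitimate since $\tilde{U} \in L^2(0,t; D(A))$ and the deterministic part of the drift lies in $L^2(0,t; H)$. The resulting energy identity, combined with the weak continuity in $V$ coming from the established bounds, forces strong continuity in $V$, and Fatou applied to \eqref{eq:un.Vp} transfers the $L^p(\tilde{\Omega})$-bound to the limit.
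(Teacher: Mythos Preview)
Your outline follows the paper's strategy in its broad strokes, but there are two concrete gaps where your proposed mechanism does not work as stated.

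\textbf{Stochastic integral.} Your splitting
\[
\int_0^{\cdot}\sigma^{n_k}(\tilde U^{n_k})\,d\tilde W^{n_k}-\int_0^{\cdot}\sigma(\tilde U)\,d\tilde W
=\int_0^{\cdot}\bigl[\sigma^{n_k}(\tilde U^{n_k})-\sigma(\tilde U)\bigr]\,d\tilde W^{n_k}
+\Bigl(\int_0^{\cdot}\sigma(\tilde U)\,d\tilde W^{n_k}-\int_0^{\cdot}\sigma(\tilde U)\,d\tilde W\Bigr)
\]
is not well-posed: the integrand $\sigma(\tilde U)$ is adapted to $(\tilde\Fc_t)$, but not, in general, to the filtration $(\Fc_t^{n_k})$ generated by $(\tilde U^{n_k},\tilde W^{n_k})$. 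Hence the integral $\int\sigma(\tilde U)\,d\tilde W^{n_k}$ may fail to be defined, and the Burkholder--Davis--Gundy inequality you invoke for the first piece is not available either. The paper does not split in this way; it first proves $\sigma^{n_k}(\tilde U^{n_k})\to\sigma(\tilde U)$ in $L^{q/2}(\tilde\Omega\times[0,t];L_2(\Uc,H))$ via \eqref{eq:sigma.lip.H}, the Poincar\'e-type inequality \eqref{eq:inverse.poincare} and Vitali, and then appeals to \cite[Lemma~2.1]{debussche2011}, a result tailored precisely to joint convergence of integrands and drivers without the adaptedness cross-compatibility you are implicitly assuming.

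\textbf{Continuity in $V$.} Applying Theorem~\ref{thm:ito.lemma} to $\Vert\tilde U\Vert^2$ is circular: that lemma has $U(\cdot\wedge\tau)\in L^2(\Omega,C([0,T],X_{\alpha+1}))$ among its hypotheses, which with $\alpha=0$ is exactly the $C([0,t],V)$ regularity you want to conclude. The paper avoids this by writing $\tilde U=Z+\bar U$, where $Z$ solves the linear stochastic problem $dZ+AZ\,dt=\sigma(\tilde U)\,d\tilde W$ (so $Z\in L^2(\tilde\Omega,C([0,t],V))$ by stochastic maximal regularity, using $\sigma(\tilde U)\in L^2(\tilde\Omega;L^2(0,t;L_2(\Uc,V)))$ from \eqref{eq:sigma.bnd.V}) and $\bar U$ solves a random deterministic equation for which the Lions--Magenes lemma gives $\bar U\in C([0,t],V)$. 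Only after this is the It\^o lemma invoked to obtain the higher moments \eqref{eq:martingale.solution.approx.regularity}; your Fatou/lower-semicontinuity shortcut for the mixed quantity $\vert A\tilde U\vert^2\Vert\tilde U\Vert^{p-2}$ is not obviously justified by the weak convergences available.
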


Note that the requirement $q > 4$ above is used to prove the convergence of the stochastic term of the approximating sequence $\tilde{U}^{n_k}$.

\begin{proof}
The proof follows the argument of \cite[Proposition 7.1]{debussche2011}. Arguing as in \cite[Section 7.1]{debussche2011} we may find
\begin{equation}
	\label{eq:tilde.u.regular}
	\tilde{U} \in L^2\left(\tilde{\Omega}, L^2\left(0, t; D(A)\right)\right) \cap L^2\left( \tilde{\Omega}, L^\infty\left(0, t; V\right)\right)
\end{equation}
such that
\begin{equation}
	\label{eq:tilde.u.convergence}
	\tilde{U}^{n_k} \harp \tilde{U} \ \text{in} \ L^2\left(\tilde{\Omega}, L^2\left(0, t; D(A)\right)\right), \qquad \tilde{U}^{n_k} \to \tilde{U} \ \text{in} \ L^2\left(\Omega, L^2\left(0, t; V\right)\right).
\end{equation}

Let $U^\sharp \in D(A)$ be fixed. Repeating the argument of \cite[Section 7.2]{debussche2011} we deduce the convergence of deterministic terms
\begin{multline}
	\label{eq:deterministic.convergence}
	\int_0^\cdot \left(A\tilde{U}^{n_k} + \theta(\Vert \tilde{U}^{n_k} - \tilde{U}^{n_k}_\ast \Vert) B^{n_k}(\tilde{U}^{n_k}) + F^{n_k}(\tilde{U}^{n_k}), U^\sharp\right) \, dr\\
	\to \int_0^\cdot \left( A\tilde{U} + \theta(\Vert \tilde{U} - \tilde{U}_\ast \Vert) B(\tilde{U}) + F(\tilde{U}), U^\sharp\right) \, dr
\end{multline}
in $L^r([0, t] \times \tilde{\Omega})$ for every $r \in [1, 2)$. To show the convergence of the stochastic term we use the Lipschitz continuity of $\sigma$ in $L_2\left(\Uc, H\right)$ \eqref{eq:sigma.lip.H} and the Poincaré type inequality \eqref{eq:inverse.poincare} to get
\begin{align*}
	\Vert \sigma^{n_k}(\tilde{U}^{n_k}) - \sigma(\tilde{U}) &\Vert_{L^2\left(0, t; L_2\left(\Uc, H\right)\right)}^2\\
	&\leq C \left( \Vert \sigma(\tilde{U}^{n_k}) - \sigma(\tilde{U}) \Vert_{L^2\left(0, t; L_2\left(\Uc, H\right)\right)}^2 + \Vert Q_{n_k} \sigma(\tilde{U}^{n_k}) \Vert_{L^2\left(0, t; L_2\left(\Uc, H\right)\right)}^2 \right)\\
	&\leq C \left( \Vert \tilde{U}^{n_k} - \tilde{U} \Vert_{L^2(0, t; V)}^2 + \frac{1}{\lambda_{n_k}} \Vert \sigma(\tilde{U}^{n_k}) \Vert_{L^2(0, t; L_2(\Uc, V))}^2 \right)\\
	&\leq C \left( \Vert \tilde{U}^{n_k} - \tilde{U} \Vert_{L^2\left(0, t; V\right)}^2 + \frac{1}{\lambda_{n_k}} \int_0^t 1 + \Vert \tilde{U}^{n_k} \Vert^2 + \vert A\tilde{U}^{n_k} \vert^2 \, ds \right)
\end{align*}
Recalling the uniform estimates \eqref{eq:un.Vp} from Lemma \ref{lemma:approximation.estimates} and the convergence \eqref{eq:tilde.u.convergence} we have $\sigma^{n_k}(\tilde{U}^{n_k}) \to \sigma(\tilde{U})$ in $L^2\left(0, t; L_2\left(\Uc, H\right)\right)$ and therefore
\[
	\Vert \sigma^{n_k}(\tilde{U}^{n_k}) - \sigma(\tilde{U}) \Vert_{L_2\left(\Uc, H\right)}^2 \to 0
\]
for almost all $(s, \omega) \in [0, t] \times \tilde{\Omega}$. Using the estimate \eqref{eq:un.Vp} again with the bound \eqref{eq:sigma.bnd.H} on $\sigma(U)$ in $L_2\left(\Uc, H\right)$ we get
\[
	\sup_{k \in \Nb} \Eb \left[ \int_0^t \Vert \sigma^{n_k}(\tilde{U}^{n_k}) \Vert_{L_2\left(\Uc, H\right)}^{q/2} \, ds \right] \leq C \sup_{k \in \Nb} \Eb \left[ 1 + \sup_{s \in [0, t]} \Vert \tilde{U}^{n_k} \Vert^{q/2} \right],
\]
which gives uniform integrability of $\Vert \sigma(\tilde{U}^{n_k}) \Vert_{L_2(\Uc, H)}$ in $L^{q_0}([0, t] \times \tilde{\Omega})$ with $q_0 \in [1, q/2)$ and $q/2 > 2$. By the Vitali convergence theorem we have
\begin{equation}
	\label{eq:sigma.nk.lp}
	\sigma^{n_k}(\tilde{U}^{n_k}) \to \sigma(\tilde{U}) \qquad \text{in} \quad L^{q/2}\left(\tilde{\Omega}, L^{q/2}\left(0, t; L_2\left(\Uc, H\right)\right)\right).
\end{equation}
From \cite[Lemma 2.1]{debussche2011} we obtain
\begin{equation}
	\label{eq:sigma.dw.convergence}
	\int_0^\cdot \sigma^{n_k}(\tilde{U}^{n_k}) \, d\tilde{W}^{n_k} \to \int_0^\cdot \sigma(\tilde{U}) \, d\tilde{W}
\end{equation}
in probability in $L^2\left(0, t; H\right)$. Using the Vitali Theorem once more with \eqref{eq:sigma.nk.lp} we observe that the convergence in \eqref{eq:sigma.dw.convergence} occurs in the space $L^2(\tilde{\Omega}, L^2\left(0, t; H\right))$.

From the convergence in \eqref{eq:deterministic.convergence} and the above we infer that for all $U^\sharp \in D(A)$ the equation
\begin{multline}
	\label{eq:variational.eq}
	\left(\tilde{U}(s), U^\sharp\right) + \int_0^s \left(A\tilde{U} + \theta(\Vert \tilde{U} - \tilde{U}_\ast \Vert) B(\tilde{U}) + F(\tilde{U}), U^\sharp\right) \, dr\\
	= \left(\tilde{U}_0, U^\sharp\right) + \int_0^s \left(\sigma(\tilde{U}) \, dW, U^\sharp\right)
\end{multline}
holds for almost all $(s, \omega) \in [0, t] \times \tilde{\Omega}$. By \eqref{eq:tilde.u.regular} and by the density of $D(A)$ in $H$, \eqref{eq:variational.eq} holds for $U^\sharp \in H$ which gives the analogue to \eqref{eq:solution.def} for the modified equation \eqref{eq:pe.modified}.

The proof of continuity of $\tilde{U}$ in time in the space $V$ follows by a maximal regularity type argument similarly as in \cite[Section 7.3]{debussche2011}, see also \cite{pardoux1979}. By the assumption on the growth of the $L_2\left(\Uc, V \right)$-norm of $\sigma$ in \eqref{eq:sigma.bnd.V} and the regularity of $\tilde{U}$ from \eqref{eq:tilde.u.regular} we have
\[
	\sigma(\tilde{U}) \in L^2\left(\tilde{\Omega}; L^2\left(0, t; L_2\left(\Uc, V\right)\right)\right),
\]
therefore the solution of
\[
	dZ + AZ \, dt = \sigma(U) \, d\tilde{W}, \qquad Z(0) = \tilde{U}_0,
\]
satisfies
\begin{equation}
	\label{eq:z.regularity}
	Z \in L^2\left(\tilde{\Omega}, C\left([0, t], V\right)\right) \cap L^2\left(\tilde{\Omega}; L^2\left(0, t; D(A)\right)\right).
\end{equation}
Then, defining $\bar{U} = \tilde{U} - Z$, by \eqref{eq:pe.modified} we have $\Pb$-almost surely
\begin{equation}
	\label{eq:bar.u}
	\tfrac{d}{dt} \bar{U} + A\bar{U} + \theta(\Vert \bar{U} + Z - \tilde{U}_\ast \Vert) B(\bar{U} + Z) + F(\bar{U} + Z) = 0, \qquad \bar{U}(0) = \tilde{U}_0.
\end{equation}
The regularity of $Z$ \eqref{eq:z.regularity} and $\tilde{U}$ \eqref{eq:tilde.u.regular} gives
\[
	A \bar{U}, \ \theta(\Vert \bar{U} + Z - \tilde{U}_\ast \Vert) B(\bar{U} + Z), \ F(\bar{U} + Z) \in L^2\left(\tilde{\Omega}, L^2\left(0, t; H\right)\right)
\]
and thus
\[
	\frac{d}{dt} A^{1/2} \bar{U} \in L^2\left(\tilde{\Omega}, L^2\left(0, t; V'\right)\right), \qquad A^{1/2} \bar{U} \in L^2\left(\tilde{\Omega}, L^2\left(0, t; V\right)\right).
\]
Finally, by the Lions-Magenes Lemma, see e.g.\ \cite[Lemma 1.2, Chapter 3]{temam1979}, we infer from \eqref{eq:bar.u} that $A^{1/2} \bar{U} \in C\left(\left[0, t \right], H\right)$ and therefore $\bar{U} \in C\left(\left[0, t \right], V\right)$, both $\Pb$-almost surely.

To establish \eqref{eq:martingale.solution.approx.regularity} it suffices to repeat the estimates in the first part of Lemma \ref{lemma:approximation.estimates} for $\tilde{U}$. This can be done by using the It\^{o} Lemma from Theorem \ref{thm:ito.lemma}. It is straightforward to check that the assumptions of the lemma are satisfied for $\psi(U) = \Vert U \Vert^p$. Indeed, the operator $D\psi(U)$ can be extended to $H$ if $U \in D(A)$ by $D\psi(U)(h) = p \Vert U \Vert^{p-2} \left( AU, h \right)$ and the required convergence property \eqref{eq:ito.D.psi.convergence} can be established in a direct way.
\end{proof}

\begin{corollary}
\label{cor:loc.mart.sol}
Let $(\tilde{\Sc}, \tilde{W}, \tilde{U})$ be the global martingale solution of \eqref{eq:pe.modified} from Proposition \ref{prop:global.martingale.existence} with $\tilde{U}_0 \in L^q\left( \Omega, V \right)$ for some $q \geq 8$. Let
\begin{equation*}
	\tau = \inf \left\lbrace t \geq 0 \mid \Vert \tilde{U} - U_\ast \Vert \geq \kappa \right\rbrace,
\end{equation*}
with $U_\star$ being the solution to the linear problem \eqref{eq:linear} and $\kappa > 0$ the constant from \eqref{eq:theta} and Lemma \ref{lemma:approximation.estimates}. Then $(\tilde{\Sc}, \tilde{W}, \tilde{U}, \tau)$ is a local martingale solution to the problem \eqref{eq:pe.abstract}.

Moreover, for all $p > 2$ such that $q \geq 2p$, then for all $t \geq 0$
\begin{equation}
	\label{eq:local.mart.sol.regularity}
	\tilde{U}\left( \cdot \wedge \tau \right) \in L^p \left( \Omega, C\left( [0, t], V \right) \right), \qquad \mathds{1}_{[0, \tau]} \vert A\tilde{U} \vert^2 \Vert \tilde{U} \Vert^{p-2} \in L^1\left( \Omega, L^1\left( 0, t \right) \right).
\end{equation}
\end{corollary}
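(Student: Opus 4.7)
The proof is a straightforward stopping-time reduction from the global martingale solution of the modified equation \eqref{eq:pe.modified} provided by Proposition \ref{prop:global.martingale.existence} to a local martingale solution of the original equation \eqref{eq:pe.abstract}. I would organize the argument in four short steps.

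First, I would verify that $\tau$ is an $\tilde{\Fb}$-stopping time and that $\tau > 0$ $\tilde{\Pb}$-a.s. Proposition \ref{prop:global.martingale.existence} gives continuous $V$-valued paths for $\tilde{U}$, and the estimate \eqref{eq:una.estimate1} together with the regularity of the linear problem \eqref{eq:linear} yields continuous $V$-valued paths for $U_\ast$. Since $\tilde{U}(0) = U_\ast(0) = \tilde{U}_0$, the real-valued process $t \mapsto \Vert \tilde{U}(t) - U_\ast(t) \Vert$ is continuous, adapted and starts at zero; hence $\tau$ is a stopping time and $\tau > 0$ a.s.

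Second, I would exploit the cutoff structure of \eqref{eq:pe.modified} to identify the stopped process with a solution of \eqref{eq:pe.abstract}. By the very definition of $\tau$ and the support property \eqref{eq:theta} of $\theta$, the factor $\theta(\Vert \tilde{U} - U_\ast \Vert)$ equals $1$ throughout $[0, \tau]$ (possibly after adjusting the threshold in the definition of $\tau$ to lie inside the plateau of $\theta$). Consequently the modified nonlinearity coincides with $B(\tilde{U})$ on $[0, \tau]$, and the $H$-valued identity \eqref{eq:variational.eq} (extended from $D(A)$ to $H$ by density, as justified in Proposition \ref{prop:global.martingale.existence}) becomes exactly \eqref{eq:solution.def} for the quadruple $(\tilde{\Sc}, \tilde{W}, \tilde{U}, \tau)$.

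Third, I would verify the regularity requirement \eqref{eq:solution.regularity}. The inclusion $\tilde{U}(\cdot \wedge \tau) \in L^2(\tilde{\Omega}, C([0, t], V))$ follows from the path continuity of $\tilde{U}$ in $V$ established at the end of the proof of Proposition \ref{prop:global.martingale.existence} combined with the $L^\infty_t V$-bound in \eqref{eq:tilde.u.regular}; the bound $\mathds{1}_{[0, \tau]} \tilde{U} \in L^2(\tilde{\Omega}, L^2(0, t; D(A)))$ is inherited directly from \eqref{eq:tilde.u.regular}.

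Fourth, the enhanced regularity \eqref{eq:local.mart.sol.regularity} is obtained by applying \eqref{eq:martingale.solution.approx.regularity} of Proposition \ref{prop:global.martingale.existence} to $\tilde{U}$ (available thanks to the hypothesis $q \geq 2p \geq 8$ on $\tilde{U}_0$) and then restricting to $\cdot \wedge \tau$ or multiplying by $\mathds{1}_{[0,\tau]}$, which can only reduce the relevant norms. The main conceptual point in the whole argument is the coincidence of the modified and original nonlinearities on $[0, \tau]$ in Step~2; everything else is bookkeeping inherited from Proposition \ref{prop:global.martingale.existence}.
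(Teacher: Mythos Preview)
The paper does not give a separate proof of this corollary; it is stated as an immediate consequence of Proposition~\ref{prop:global.martingale.existence}, so there is no ``paper's proof'' to compare against. Your four-step argument is exactly the intended reduction and is correct. The only genuine subtlety is the one you already flag in Step~2: with the threshold in $\tau$ set at $\kappa$ as written, the condition $\Vert \tilde U - U_\ast\Vert < \kappa$ on $[0,\tau)$ only places the argument of $\theta$ in the support of $\theta$, not in its plateau $[-\kappa/2,\kappa/2]$, so $\theta(\Vert \tilde U - U_\ast\Vert)=1$ is not guaranteed. Replacing $\kappa$ by $\kappa/2$ in the definition of $\tau$ (your parenthetical fix) resolves this and is harmless for everything else, since $\tau>0$ a.s.\ still follows from the same continuity argument.
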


\subsection{Existence of local pathwise solutions}

Implementing the same argument from \cite[Section 5]{debussche2011} relying on the the Gy\"{o}ngy-Krylov theorem, see \cite[Lemma 1.1]{gyongy1996}, we briefly sketch the proof of the existence of local pathwise solution of the original problem \eqref{eq:pe.v.reform}-\eqref{eq:pe.T.reform}. We provide more details mainly in those parts where the explicit smallness of constants in the growth estimates of $\sigma(U)$ in $L_2\left( \Uc, V \right)$ \eqref{eq:sigma.bnd.V} and the Lipschitz constant $\gamma$ in \eqref{eq:sigma.lip.V} may play a role.

First we state a pathwise uniqueness result with a proof following that of \cite[Proposition 5.1]{debussche2011} with only minor changes. From now on we assume that $F$ also satisfies \eqref{eq:F.lipschitz}.

\begin{proposition}
\label{prop:pathwise.uniqueness}
Let $U_0 \in L^q\left( \Omega; \Fc_0, V \right)$ with $q \geq 8$. Let $\Sc = \left(\Omega, \Fc, \Fct, \Pb \right)$ and $W$ be as in Theorem \ref{thm:maximal.existence} and let $\left(\Sc, W, U_1\right)$ and $\left(\Sc, W, U_2\right)$ be two global martingale solutions of the modified equation \eqref{eq:pe.modified}. Let $\Omega_0 = \left\lbrace U_1(0) = U_2(0) \right\rbrace \subseteq \Omega$. Then
\begin{equation*}
	\Pb \left( \left\lbrace \mathds{1}_{\Omega_0}\left(U^1(t) - U^2(t)\right) = 0 \ \text{for all} \ t \geq 0 \right\rbrace \right) = 1.
\end{equation*}
\end{proposition}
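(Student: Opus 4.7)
The plan is to derive a stopped Gronwall-type inequality for $\mathds{1}_{\Omega_0} \|D\|^2$, where $D := U^1 - U^2$ and $\theta_i := \theta(\|U^i - U_\ast\|)$, and conclude that $\mathds{1}_{\Omega_0} D \equiv 0$. The difference satisfies
\[
dD + [AD + \theta_1 B(U^1) - \theta_2 B(U^2) + F(U^1) - F(U^2)]\, dt = [\sigma(U^1) - \sigma(U^2)]\, dW,
\]
with $\mathds{1}_{\Omega_0} D(0) = 0$. Since the regularity \eqref{eq:solution.regularity} places each $U^i$ in $L^2(\Omega; L^2(0, t; D(A))) \cap L^2(\Omega; C([0, t]; V))$, I first localize by
\[
\tau_R = \inf\left\lbrace s \geq 0 : \sup_{r \leq s} \|U_\ast(r)\|^2 + \int_0^s (|AU^1|^2 + |AU^2|^2)\, dr + \|D(s)\|^2 \geq R \right\rbrace,
\]
which increases to $+\infty$ $\Pb$-a.s.\ as $R \to \infty$ and makes $\|D\|$ bounded and every forthcoming Gronwall coefficient pathwise integrable on $[0, \tau_R]$.

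Next, I would apply the It\^{o} formula of Theorem \ref{thm:ito.lemma} to $\psi(U) = \|U\|^2 = (A^{1/2}U, A^{1/2}U)$, whose extension $D\psi(U)(h) = 2(AU, h)$ and the convergence \eqref{eq:ito.D.psi.convergence} are checked exactly as at the end of the proof of Proposition \ref{prop:global.martingale.existence}. On $[0, t \wedge \tau_R]$ this yields
\begin{multline*}
\|D\|^2 + 2\int_0^{\cdot} |AD|^2\, ds = -2\int_0^{\cdot} (\theta_1 B(U^1) - \theta_2 B(U^2), AD)\, ds - 2\int_0^{\cdot} (F(U^1) - F(U^2), AD)\, ds\\
+ 2\int_0^{\cdot} (AD, [\sigma(U^1) - \sigma(U^2)]\, dW) + \int_0^{\cdot} \|\sigma(U^1) - \sigma(U^2)\|^2_{L_2(\Uc, V)}\, ds.
\end{multline*}

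I would then estimate each term. The trilinear part is split as $\theta_1 B(U^1) - \theta_2 B(U^2) = \theta_1 [B(D, U^1) + B(U^2, D)] + (\theta_1 - \theta_2) B(U^2)$; the Lipschitz bound $|\theta_1 - \theta_2| \leq \|\theta'\|_\infty \|D\|$, the cut-off property $\theta_i > 0 \Rightarrow \|U^i\| \leq \kappa + \|U_\ast\|$, and (off the support of $\theta_2$) the elementary inequality $\|U^2\| \leq \|D\| + \|U^1\|$ allow application of \eqref{eq:b.estimate2}, the equivalence $\|\cdot\|_{H^2} \simeq |A\cdot|$ on $D(A)$, and Young's inequality to obtain
\[
|(\theta_1 B(U^1) - \theta_2 B(U^2), AD)| \leq \varepsilon |AD|^2 + C_\varepsilon K(s) \|D\|^2,
\]
with $K(s) \leq C(R)(1 + |AU^1|^2 + |AU^2|^2)$ integrable on $[0, \tau_R]$. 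The source contribution is controlled analogously via \eqref{eq:F.lipschitz}, and the It\^{o} correction is bounded by $C\|D\|^2 + \gamma|AD|^2$ through \eqref{eq:sigma.lip.V}.

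Finally, choosing $\varepsilon$ small and exploiting the constraint $\gamma < 2/C_{BDG}^2$ of Hypothesis $H_4$---which after Burkholder--Davis--Gundy permits an additional Young split of the martingale term---all $|AD|^2$ contributions are absorbed into the coercivity $2|AD|^2$ on the left. Multiplying by the $\Fc_0$-measurable indicator $\mathds{1}_{\Omega_0}$, which commutes with the stochastic integral, and recasting the inequality in the form required by the stochastic Gronwall lemma (Proposition \ref{prop:stochastic.gronwall}) with $X = \|D\|^2$, $Y = |AD|^2$, $R_{\text{coef}} = K$, $Z = 0$, one obtains $\Eb[\mathds{1}_{\Omega_0} \sup_{s \leq t \wedge \tau_R} \|D\|^2] = 0$. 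By continuity in $V$ this gives $\mathds{1}_{\Omega_0} D \equiv 0$ on $[0, \tau_R]$, so the $\|D\|^2$-piece in the definition of $\tau_R$ is vacuous on $\Omega_0$; letting $R \to \infty$ concludes. The main obstacle is the simultaneous absorption of the $|AD|^2$ produced by \eqref{eq:b.estimate2} and by the $L_2(\Uc, V)$-Lipschitz bound \eqref{eq:sigma.lip.V}: it is precisely this that makes the explicit smallness of $\gamma$ in Hypothesis $H_p$ indispensable, while the cut-off $\theta$ and the localization by $\tau_R$ are what keep the Gronwall coefficient $K$ pathwise integrable despite $|AU^i|$ being merely square-integrable in time.
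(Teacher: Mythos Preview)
Your proposal is correct and follows essentially the same route as the paper: apply the It\^{o} formula to $\Vert D\Vert^2$, estimate the trilinear, source, It\^{o}-correction and martingale pieces separately, absorb the $|AD|^2$ terms using the smallness of $\gamma$ in Hypothesis $H_4$, and close via the stochastic Gronwall lemma after localization by a stopping time.

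The one noteworthy difference is in how you control the Gronwall coefficient coming from the trilinear term. The paper simply drops the $\theta$ factors (they are $\leq 1$) and obtains a coefficient of the form $\|U^{(1)}\|^2|AU^{(1)}|^2 + \|U^{(2)}\|^2|AU^{(2)}|^2$, which is why its stopping time $\tau^n$ is defined through $\int_0^t \|U^{(i)}\|^2|AU^{(i)}|^2\,ds$; the finiteness of this integral is what actually uses the $q\geq 8$ assumption via Lemma~\ref{lemma:approximation.estimates} with $p=4$. You instead exploit the cut-off property $\theta_i>0\Rightarrow \|U^i\|\leq \kappa+\|U_\ast\|$ together with the $\|D\|$ term in your stopping time to bound $\|U^i\|$ pointwise by $C(R)$, reducing the coefficient to $C(R)(1+|AU^1|^2+|AU^2|^2)$. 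This is a legitimate alternative and in fact lighter on the integrability of $U_0$, since your $\tau_R\to\infty$ only needs the basic $L^2(0,t;D(A))$ regularity from \eqref{eq:solution.regularity}. Both approaches land on the same stochastic Gronwall application.
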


\begin{proof}
Let $R = \uone - \utwo$ and $\bar{R} = \mathds{1}_{\Omega_0} R$. Let $\tau^n$ be the stopping time defined by
\[
	\tau^n = \inf \left\lbrace t \geq 0 \mid \int_0^t \Vert \uone \Vert^2 \vert A\uone \vert^2 + \Vert \utwo \Vert^2 \vert A\utwo \vert^2 \, ds \geq n \right\rbrace.
\]
Since both $\uone$ and $\utwo$ are defined globally and $U_0 \in L^q\left( \Omega; \Fc_0, V\right)$ with $q \geq 8$, from the estimates from Lemma \ref{lemma:approximation.estimates} we deduce $\tau^n \to \infty$ $\Pb$-a.s.\ and therefore it suffices to show that
\[
	\Eb \sup_{s \in \left[0, \tau^n \wedge t \right]} \Vert \bar{R}(s) \Vert^2  = 0
\]
for all $n \in \Nb$ and $t > 0$. Subtracting the equations for $\uone$ and $\utwo$ we get
\begin{gather*}
	\begin{multlined}
		dR + \left[AR + \theta(\Vert \uone - U^{(1)}_\ast \Vert) B(\uone) - \theta(\Vert \utwo - U^{(2)}_\ast \Vert) B(\utwo) + F(\uone) - F(\utwo)\right] \, dt\\
		= \left[ \sigma(\uone) - \sigma(\utwo) \right] \, dW,
	\end{multlined}
	\\
	R(0) = \uone(0) - \utwo(0).
\end{gather*}
Fix $n \in \Nb$ and let $\tau_a$, $\tau_b$ be stopping times such that $0 \leq \tau_a \leq \tau_b \leq \tau^n$. Applying the It\^{o} Lemma from Theorem \ref{thm:ito.lemma}, multiplying by $\mathds{1}_{\Omega_0}$, using the bilinearity of $B$, integrating over $\left[\tau_a, \tau_b\right]$ and applying the expected value gives the estimate
\begin{align}
	\nonumber
	\Eb \bigg[ \sup_{s \in \left[\tau_a, \tau_b\right]} &\Vert \bar{R} \Vert^2 + 2 \int_{\tau_a}^{\tau_b} \vert A \bar{R} \vert^2 \, ds \bigg] \leq  \Eb \Vert \bar{R}(\tau_a) \Vert^2 \\
	\nonumber
	&\quad + 2 \Eb \int_{\tau_a}^{\tau_b} \left| \left( \left[\theta(\Vert \uone - U^{(1)}_\ast \Vert) - \theta(\Vert \utwo - U^{(2)}_\ast \Vert) \right] B(\uone), A\bar{R} \right) \right| \, ds\\
	\nonumber
	&\quad+ 2 \Eb \int_{\tau_a}^{\tau_b} \left| \left( B(\uone) - B(\utwo), A\bar{R}\right) \, ds \right| + 2 \Eb \int_{\tau_a}^{\tau_b} \left| \left( F(\uone) - F(\utwo), A\bar{R}\right) \right| \, ds \\
	\nonumber
	&\quad+ 2 \Eb \sup_{s \in \left[\tau_a, \tau_b\right]} \left| \int_{\tau_a}^{s} \left( \left[\sigma(\uone) - \sigma(\utwo) \right] \, dW, A\bar{R} \right) \right|\\
	\nonumber
	&\quad+ \Eb \int_{\tau_a}^{\tau_b} \mathds{1}_{\Omega_0}\Vert \sigma(\uone) - \sigma(\utwo) \Vert_{L_2\left(\Uc, V\right)}^2 \, ds \\
	\label{eq:R.estimate}
	&= \Eb \Vert \bar{R}(\tau_a) \Vert^2 + J_1 + J_2 + J_3 + J_4 + J_5.
\end{align}
Note that $\mathds{1}_{\Omega_0}(U^{(1)}_\ast - U^{(2)}_\ast) = 0$ almost surely. To estimate $J_1$, recall that $\theta$ is Lipschitz continuous and use the estimate \eqref{eq:b.estimate2} and the Young inequality to get
\begin{equation}
	J_1 \leq \frac{\varepsilon}{3} \Eb \int_{\tau_a}^{\tau_b} \vert A \bar{R} \vert^2 \, ds + C_\varepsilon \Eb \int_{\tau_2}^{\tau_b} \Vert \bar{R} \Vert^2 \Vert \uone \Vert^2 \vert A \uone \vert^2 \, ds
\end{equation}
for some $\varepsilon > 0$ precisely determined later. The term $J_2$ can be estimated by the bound \eqref{eq:b.estimate2} on $B$ and the Young inequality. We deduce
\begin{align}
	\nonumber
	J_2 &\leq 2 \Eb \int_{\tau_a}^{\tau_b} \left| \left(B(\uone) - B(\utwo) \pm B(\utwo, \uone), A\bar{R} \right) \right| \, ds\\
	&\leq \frac{\varepsilon}{3} \Eb \int_{\tau_a}^{\tau_b} \vert A \bar{R} \vert^2 \, ds + C_\varepsilon \Eb \int_{\tau_a}^{\tau_b} \Vert \bar{R} \Vert^2 \left( \Vert \uone \Vert^2 \vert A \uone \vert^2 + \Vert \utwo \Vert^2 \vert A \utwo \vert^2 \right) \, ds.
\end{align}
For $J_3$ we recall the Lipschitz continuity of $F$ \eqref{eq:F.lipschitz} and employ the Young inequality to obtain
\begin{equation}
	J_3 \leq \frac{\varepsilon}{3} \Eb \int_{\tau_a}^{\tau_b} \vert A \bar{R} \vert^2 \, ds + C_\varepsilon \Eb \int_{\tau_a}^{\tau_b} \Vert \bar{R} \Vert^2 \, ds.
\end{equation}
Regarding $J_4$, from the Burkholder-Davis-Gundy inequality \eqref{eq:bdg}, the Lipschitz continuity of $\sigma$ in $L_2\left(\Uc, V \right)$ \eqref{eq:sigma.lip.V} and the Young inequality we follow the same argument leading to \eqref{eq:Jp32} and infer
\begin{equation}
	\begin{split}
		J_4 &\leq 2 C_{BDG} \Eb \left( \int_{\tau_a}^{\tau_b} \Vert \sigma(\uone) - \sigma(\utwo) \Vert_{L_2\left(\Uc, V\right)}^2 \Vert \bar{R} \Vert^2 \, ds \right)^{1/2}\\
		&\leq \left( 1- \delta + \varepsilon \right) \Eb \sup_{s \in \left[\tau_a, \tau_b\right]} \Vert \bar{R} \Vert^2 + C_{\varepsilon, \delta} \Eb \int_{\tau_a}^{\tau_b} \left( 1 + \Vert \bar{R} \Vert^2 \right) \, ds + \frac{C_{BDG}^2 \gamma}{1-\delta} \Eb \int_{\tau_a}^{\tau_b} \vert A \bar{R} \vert^2 \, ds
	\end{split}
\end{equation}
for some $\delta > \varepsilon$ small. Finally, the integral $J_5$ is estimated using the Lipschitz continuity of $\sigma$ in $L_2\left(\Uc, V\right)$ \eqref{eq:sigma.lip.V}. We get
\begin{equation}
	\label{eq:R.estimate5}
	J_5 \leq \gamma \Eb \int_{\tau_a}^{\tau_b} \vert A \bar{R} \vert^2 \, ds + C \Eb \int_{\tau_a}^{\tau_b} \Vert \bar{R} \Vert^2 \, ds.
\end{equation}

Collecting the estimates \eqref{eq:R.estimate}-\eqref{eq:R.estimate5} we finally obtain
\begin{multline*}
	\Eb \left[ \left(\delta - \varepsilon \right) \sup_{s \in \left[\tau_a, \tau_b\right]} \Vert \bar{R} \Vert^2 + \left(2 - \varepsilon - \gamma - \frac{C_{BDG}^2 \gamma}{1-\delta} \right) \int_{\tau_a}^{\tau_b} \vert A \bar{R} \vert^2 \, ds \right]\\
	\leq C \Eb \left[ \Vert \bar{R}(\tau_a) \Vert^2 + \int_{\tau_a}^{\tau_b} \Vert \bar{R} \Vert^2 \left( 1 + \Vert \uone \Vert^2 \vert A \uone \vert^2 + \Vert \utwo \Vert^2 \vert A \utwo \vert^2 \right) \, ds \right].
\end{multline*}
Recalling that $\sigma$ satisfies the hypothesis $H_4$ \eqref{eq:small.constants.maximal} and choosing $\delta$ and $\varepsilon$ sufficiently small we may invoke the stochastic Gronwall Lemma from Proposition \ref{prop:stochastic.gronwall} to conclude the proof.
\end{proof}

\begin{proposition}
\label{prop:local.path.existence}
Let $\Sc$, $W$, $\sigma$ and $F_U$ be as in Theorem \ref{thm:maximal.existence}.
\begin{enumerate}
	\item Let $U_0 \in L^q\left( \Omega; \Fc_0, V \right)$ with $q \geq \max \lbrace 2p, 8 \rbrace$. Then there exists a unique global pathwise solution of \eqref{eq:pe.modified}. Moreover, for all $p > 2$ such that $q \geq 2p$, then \eqref{eq:martingale.solution.approx.regularity} holds for all $t > 0$ with $U$ instead of $\tilde{U}$.
	\item If $U_0 \in L^2\left( \Omega; \Fc_0, V \right)$, then there exists a unique a local pathwise solution $(U, \tau)$ of \eqref{eq:pe.abstract}. Moreover, if $U_0 \in L^p\left( \Omega; \Fc_0, V \right)$ for some $p >2$, then \eqref{eq:local.mart.sol.regularity} holds for all $t > 0$ with $U$ instead of $\tilde{U}$.
\end{enumerate}
\end{proposition}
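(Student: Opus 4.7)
The plan for part (1) is to combine the martingale existence from Proposition \ref{prop:global.martingale.existence} with the pathwise uniqueness of Proposition \ref{prop:pathwise.uniqueness} via the Gy\"{o}ngy--Krylov theorem. Concretely, starting from the Galerkin sequence $\{U^n\}$ associated with the original stochastic basis $\Sc$ and noise $W$, I would consider the joint laws $\nu^{n, m}$ of the triples $(U^n, U^m, W)$ on the product space $\Xc_U \times \Xc_U \times \Xc_W$. The tightness established in the proof of Proposition \ref{prop:approximating.sequence} yields tightness of $\{\nu^{n,m}\}$, and an application of Skorokhod gives a further subsequence and new random variables $(\tilde{U}^{n_k}, \hat{U}^{m_k}, \tilde{W}^{n_k, m_k})$ converging almost surely. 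Passing to the limit as in Proposition \ref{prop:global.martingale.existence}, both limit processes are global martingale solutions of \eqref{eq:pe.modified} on the same stochastic basis driven by the same Wiener process; hence Proposition \ref{prop:pathwise.uniqueness} forces them to coincide. By the Gy\"{o}ngy--Krylov criterion \cite[Lemma 1.1]{gyongy1996}, the original sequence $U^n$ converges in probability in $\Xc_U$ to a limit $U$ defined on $\Sc$, and passing to the limit in the Galerkin equation exactly as in Proposition \ref{prop:global.martingale.existence} shows that $(\Sc, W, U)$ is a pathwise solution of \eqref{eq:pe.modified}. Pathwise uniqueness in the strict sense is immediate from Proposition \ref{prop:pathwise.uniqueness} applied on $\Sc$. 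The moment bound \eqref{eq:martingale.solution.approx.regularity} transfers from $\tilde{U}$ to $U$ because convergence in probability together with uniform $L^p$ bounds (inherited from Lemma \ref{lemma:approximation.estimates}, since $q \geq 2p$) gives the required integrability by Vitali/Fatou.

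For part (2), I would first reduce to the case of essentially bounded initial data by partitioning $\Omega$ according to $\|U_0\|$. Set $\Omega_R = \{R-1 \leq \|U_0\| < R\} \in \Fc_0$ for $R \in \Nb$ and $U_0^R = U_0 \mathds{1}_{\Omega_R}$. Each $U_0^R$ satisfies $\|U_0^R\| \leq R$ almost surely, hence $U_0^R \in L^q(\Omega; \Fc_0, V)$ for every $q$. Part~(1) then produces a unique global pathwise solution $U^R$ of the modified equation \eqref{eq:pe.modified} with initial datum $U_0^R$ and with the constant $\kappa$ from \eqref{eq:theta} fixed as in Lemma \ref{lemma:approximation.estimates}. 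Define the stopping time $\tau^R = \inf\{t \geq 0 : \|U^R(t) - U^R_\ast(t)\| \geq \kappa/2\}$, where $U^R_\ast$ is the solution of the linear problem \eqref{eq:linear} starting from $U_0^R$. Since $U^R_\ast \in C([0,\infty); V)$ with $U^R_\ast(0) = U_0^R = U^R(0)$ and $U^R$ has continuous $V$-valued trajectories, $\tau^R > 0$ almost surely on $\Omega_R$. On $[0, \tau^R]$ the cutoff factor $\theta(\|U^R - U^R_\ast\|)$ equals $1$, so $U^R$ solves the abstract equation \eqref{eq:pe.abstract} on that interval. Gluing via $U(t, \omega) = \sum_R \mathds{1}_{\Omega_R}(\omega) U^R(t, \omega)$ and $\tau(\omega) = \sum_R \mathds{1}_{\Omega_R}(\omega) \tau^R(\omega)$ yields a local pathwise solution of \eqref{eq:pe.abstract} for $U_0 \in L^2(\Omega; \Fc_0, V)$; the regularity \eqref{eq:solution.regularity} follows from the corresponding regularity of each $U^R$.

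Uniqueness of the local solution is a direct consequence of Proposition \ref{prop:pathwise.uniqueness} applied on the stochastic interval $[0, \tau]$: given two local pathwise solutions $(U_1, \tau_1)$ and $(U_2, \tau_2)$ starting from the same $U_0$, one extends each past $\tau_i$ in an arbitrary way to obtain two global solutions of a modified equation with a slightly larger cutoff and applies the pathwise uniqueness result up to $\tau_1 \wedge \tau_2$. Finally, when $U_0 \in L^p(\Omega; \Fc_0, V)$ for some $p > 2$, the higher integrability \eqref{eq:local.mart.sol.regularity} is obtained by repeating the estimates of Lemma \ref{lemma:approximation.estimates} directly for $U$ through the It\^{o} formula of Theorem \ref{thm:ito.lemma}, exploiting the stopping at $\tau_N = \inf\{t : \|U(t)\| \geq N\} \wedge \tau$ to legitimize the computation and then passing to the limit $N \to \infty$ using monotone/Fatou arguments.

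The main obstacle is ensuring in part~(1) that the Gy\"{o}ngy--Krylov step goes through with the gradient-dependent noise: when passing to the limit in the joint Skorokhod extraction, one needs the stochastic integral convergence (cf.\ \eqref{eq:sigma.nk.lp}--\eqref{eq:sigma.dw.convergence}) uniformly in the pair index, which forces using the $q \geq 8$ moment assumption (as flagged in the remark following Proposition \ref{prop:global.martingale.existence}) and the smallness of $\eta_1$, $\gamma$ encoded in Hypothesis $H_4$. Once both limit processes are genuine pathwise solutions on $\Sc$ driven by the same $W$, uniqueness from Proposition \ref{prop:pathwise.uniqueness} does the rest.
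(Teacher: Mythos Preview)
Part (1) is correct and matches the paper: the Gy\"ongy--Krylov argument combining Propositions \ref{prop:approximating.sequence}, \ref{prop:global.martingale.existence} and \ref{prop:pathwise.uniqueness} is exactly what the paper invokes (citing \cite[Section 5.1]{debussche2011} and omitting details). Your route to \eqref{eq:martingale.solution.approx.regularity} via Vitali/Fatou is a harmless variant of the paper's, which instead re-runs the computation of Lemma \ref{lemma:approximation.estimates} on the limit via Theorem \ref{thm:ito.lemma}.

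In part (2) the partition-and-glue skeleton is the same as the paper's, but your stopping time $\tau^R = \inf\{t : \|U^R - U^R_\ast\| \geq \kappa/2\}$ leaves a gap in the second half of \eqref{eq:solution.regularity}. On $[0,\tau^R]\cap\Omega_R$ one has $\|U^R\|\leq \kappa/2 + C\|U_0\|$, so the $L^2(\Omega;C([0,t],V))$ bound does sum. But the assertion that $\mathds{1}_{[0,\tau]}U \in L^2(\Omega;L^2(0,t;D(A)))$ ``follows from the corresponding regularity of each $U^R$'' is not justified: the available bound on $\Eb\int_0^t|AU^R|^2$ from part (1) carries the term $\int\|U^R_\ast\|^2|AU^R_\ast|^2 \leq C\|U_0^R\|^4$ (see \eqref{eq:Jp11} in Lemma \ref{lemma:approximation.estimates}), and after localising to $\Omega_R$ and summing you need $\Eb\|U_0\|^4<\infty$, which is not assumed. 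The paper fixes this by taking your $\tau^R$ as an intermediate stopping time $\rho_k$ and imposing a further cut
\[
\tau_k = \rho_k \wedge \inf\Bigl\{s \geq 0 : \sup_{r \in [0,s\wedge\rho_k]}\|U_k\|^2 + \int_0^{s\wedge\rho_k}|AU_k|^2\,dr \geq M + \|U_0\|^2\Bigr\},
\]
which forces the pathwise bound $\sup\|U\|^2 + \int|AU|^2 \leq M + \|U_0\|^2$ on $[0,\tau]$. Both halves of \eqref{eq:solution.regularity} then follow at once from $U_0 \in L^2$, and the same pointwise inequality yields the higher integrability \eqref{eq:local.mart.sol.regularity} by simply raising to the power $p/2$---no further It\^{o} computation or auxiliary stopping is needed, in contrast to your heavier proposed route.
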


\begin{proof}
The proof of the first part runs exactly as in \cite[Section 5.1]{debussche2011} and is therefore omitted. Also if $p > 2$ is such that $q \geq 2p$, the equivalent of the regularity \eqref{eq:martingale.solution.approx.regularity} can be established by repeating the estimates of Lemma \ref{lemma:approximation.estimates} by the means of the It\^{o} Lemma from Theorem \ref{thm:ito.lemma}.

Assuming $U_0 \in L^2\left( \Omega; \Fc_0, V \right)$, we may prove the second claim by the following the localization argument of \cite[Proposition 4.1]{glatt-holtz2009}. Let $M \geq 1$ be fixed and for $k \in \Nb \cup \lbrace 0 \rbrace$ let $\Omega_k = \left\lbrace k \leq \Vert U_0 \Vert < k+1 \right\rbrace$. By the above there exist local solutions $\left(U_k, \rho_k\right)$ of the problem \eqref{eq:pe.abstract} with initial data $\mathds{1}_{\Omega_k} U_0 \in L^\infty\left(\Omega, V\right)$. Let us define
\[
	\tau_k = \rho_k \wedge \inf \left\lbrace s \geq 0; \sup_{r \in \left[0, s \wedge \rho_k\right]} \Vert U_k \Vert^2 + \int_0^{s \wedge \rho_k} \vert AU_k \vert^2\, dr \geq M + \Vert U_0 \Vert^2 \right\rbrace.
\]
Clearly, by the continuity of $U_k$ in $V$ we have $\tau_k > 0$ almost surely. Let us define
\[
	U = \sum_{k = 0}^\infty U_k \mathds{1}_{\Omega_k}, \qquad \tau = \sum_{k = 0}^\infty \tau_k \mathds{1}_{\Omega_k}.
\]
Then by the definitions of $\tau$ and $\tau_k$ we have
\begin{align*}
	\Eb \left[ \sup_{s \in [0, \tau]} \Vert U \Vert^2 + \int_0^\tau \vert AU \vert^2 \, ds \right] &= \Eb \sum_{k = 0}^\infty \mathds{1}_{\Omega_k} \left( \sup_{t \in [0, \tau]} \Vert U_k \Vert^2 + \int_0^\tau \vert AU_k \vert^2 \, ds \right)\\
	&\leq \Eb \sum_{k = 0}^\infty \mathds{1}_{\Omega_k} \left( M + \Vert U_0 \Vert^2 \right) \leq M + \Eb \Vert U_0 \Vert^2 <\infty,
\end{align*}
hence $(U, \tau)$ is a local pathwise solution of  \eqref{eq:pe.abstract}.

Let now $U_0 \in L^p\left( \Omega; \Fc_0, V \right)$ for some $p > 2$ satisfying $q \geq 2p$ and let $\left( U, \tau \right)$ be the local pathwise solution to \eqref{eq:pe.abstract} from above. Then from the definition of $\tau$ we get
\begin{align*}
	\sup_{s \in [0, \tau]} \Vert U \Vert^p &+ \int_0^\tau \vert AU \vert^2 \Vert U \Vert^{p-2} \, ds\\
	&\leq \left( \sup_{s \in [0, \tau]} \Vert U \Vert^2 \right)^{p/2} + \left( \sup_{s \in [0, \tau]} \Vert U \Vert^2 \right)^{(p-2)/2} \int_0^\tau \vert AU \vert^2 \, ds\\
	&\leq \left( M + \Vert U_0 \Vert^2 \right)^{p/2} + \left( M + \Vert U_0 \Vert^2 \right)^{(p-2)/2} \left( M + \Vert U_0 \Vert^2 \right)\\
	&\leq C_{p, M} \left( 1 + \Vert U_0 \Vert^p \right)
\end{align*}
$\Pb$-almost surely, which leads to the desired equivalent of \eqref{eq:local.mart.sol.regularity}.
\end{proof}

\subsection{Proof of Theorem \ref{thm:maximal.existence}}
\label{sec:proof.maximal.existence}

The proofs in this section are an adaptation of \cite[Section 4]{glatt-holtz2009} and \cite[Theorem 3]{mikulevicius2004}, see also \cite[Theoreme 14.21]{jacod1979} and \cite[Chapter 7, Section 2]{elworthy1982}.

Let $\Tc$ be the set all of stopping times $\tau$ such that there exists a process $U$ such that the couple $\left(U, \tau\right)$ is a local pathwise solution of the problem \eqref{eq:pe.abstract}. In particular $(U, \tau)$ has the regularity \eqref{eq:solution.regularity} and satisfies the equation \eqref{eq:solution.def}. By Proposition \ref{prop:local.path.existence} $\Tc$ is non-empty. By \cite[Chapter 5, Section 18]{doob1994} there exits a stopping time $\xi$ such that $\xi \geq \tau$ a.s.\ for all $\tau \in \Tc$ and a sequence of stopping times $\left( \tau_N \right)_{N = 1}^\infty \in \Tc$ satisfying $\tau_N \nearrow \xi$ on a full-measure set\footnote{We emphasize that we do not claim $\xi \in \Tc$ as that would be incompatible with our local solution $\left( U, \tau \right)$ being considered on a \emph{closed} interval $[0, \tau]$. Thus, we do not need to invoke the Kuratowski-Zorn Lemma.} $\Omega^1 \subseteq \Omega$. Let $\left(U_N, \tau_N \right)$ be the respective local pathwise solutions. By the pathwise uniqueness result from Proposition \ref{prop:pathwise.uniqueness} there exists $\Omega^2 \subseteq \Omega^1 \subseteq \Omega$ of full measure such that for all $N, M \in \Nb$
\begin{equation}
	\label{eq:path.uniq.k}
	U_N\left(t \wedge \tau_N \wedge \tau_M, \omega\right) = U_M\left(t \wedge \tau_N \wedge \tau_M, \omega\right), \qquad t \geq 0, \quad \omega \in \Omega^2.
\end{equation}
For $\omega \in \Omega^2$ and $t \geq 0$ we define
\[
	U\left(t, \omega\right) = \lim_{N \to \infty} U_N\left(t \wedge \tau_N, \omega\right) \mathds{1}_{\left[0, \xi\right)}(\omega).
\]
Note that the limit exists since by \eqref{eq:path.uniq.k} the sequence $U_N$ it is constant for $k \geq k_0$ with $k_0 = k_0(\omega)$. It is straightforward to check that $U$, $\xi$ and $\tau_N$ have the required properties, cf.\ the amalgation argument in \cite[Lemmata III.6.A and III.6.B]{elworthy1982}, see also \cite{brzezniak1997}. If $U_0 \in L^p\left(\Omega; \Fc_0, V\right)$ with $p > 2$, the additional integrability \eqref{eq:solution.regularity.p} immediately follows from the construction of $\tau_N$, in particular from the fact that the stopping times $\tau_N$ are accessible by a finite number of extensions which, by the final argument of the proof of Proposition \ref{prop:local.path.existence}, preserves integrability in question.

It remains to establish the blow-up property \eqref{eq:explosion}. Let $\rho_R$ be the stopping time defined by
\[
	\rho_R = \inf \left\lbrace t \in \left[0, \xi\right) \mid \sup_{s \in [0, t]} \Vert U \Vert^2 + \int_0^t \vert AU \vert^2 \, ds \geq R \right\rbrace \wedge \xi.
\]
Let us observe that $\left\lbrace \xi < \infty \right\rbrace = \Omega_1 \cup \Omega_2$, where the disjoint sets $\Omega_1$ and $\Omega_2$ are given by
\[
	\Omega_1 = \left\lbrace \xi < \infty \right\rbrace \cap \left\lbrace \rho_R < \xi \ \text{for all} \ R > 0 \right\rbrace, \quad \Omega_2 = \left\lbrace \xi < \infty \right\rbrace \cap \left\lbrace \rho_R = \xi \ \text{for some} \ R > 0 \right\rbrace.
\]
Clearly, on the set $\Omega_1$ the blow-up property \eqref{eq:explosion} holds. Let us show that $\Pb \left( \Omega_2 \right) = 0$. If $\Pb \left( \Omega_2 \right) > 0$, observing
\[
	\Omega_2 = \bigcup_{R \in \Nb} \left\lbrace \xi < \infty \right\rbrace \cap \left\lbrace \rho_R = \xi \right\rbrace = \bigcup_{R \in \Nb} \Omega_2^R,
\]
we find $R_0 \in \Nb$ such that $\Pb\left( \Omega_2^{R_0} \right) > 0$. In particular, we have
\begin{equation}
	\label{eq:maximal.existence.bound}
	\sup_{s \in [0, \xi)} \Vert U \Vert^2 + \int_0^\xi \vert AU \vert^2 \, ds \leq R_0
\end{equation}
on a set of positive measure. From the definition it is immediate that $\left\lbrace \xi < \infty \right\rbrace \in \Fc_{\xi}$. Also by e.g.\ \cite[Lemma I.2.15]{karatzas1991} $\left\lbrace \rho_{R_0} = \xi \right\rbrace \in \Fc_\xi$ and therefore $\Omega_{2}^{R_0} \in \Fc_\xi$. Let $X$ be the stochastic process defined by
\begin{equation}
	\label{eq:blowup.x}
	X(t) = U_0 + \int_0^{t} \mathds{1}_{\left[ 0, \xi \right)} \mathds{1}_{\Omega_2^{R_0}} \left( - AU - B(U) - F(U) \right) \, ds + \int_0^t \mathds{1}_{\left[ 0, \xi \right)} \mathds{1}_{\Omega_2^{R_0}} \sigma(U) \, dW
\end{equation}
for $t \in [0, \infty)$ and $\omega \in \Omega$. The stochastic integral is well defined due to the bound \eqref{eq:maximal.existence.bound} and the assumption \eqref{eq:sigma.bnd.V} and takes values in $V$. The deterministic integral can be definied pathwise. From \eqref{eq:blowup.x} we observe
\begin{equation}
	\label{eq:U.and.X}
	X(t) = U(t) \quad \text{for all} \ 0 \leq t < \xi \ \Pb\text{-a.s.\ on the set} \ \Omega_2^{R_0}.
\end{equation}
We want to establish that the trajectories of the process $X$ are continuous in $V$ $\Pb$-almost surely. On $\Omega \setminus \Omega_2^{R_0}$ this is immediate. Indeed, by the It\^{o} Lemma, see e.g.\ Theorem \ref{thm:ito.lemma}, we get
\begin{multline*}
	d\Vert X \Vert^2 + 2 \mathds{1}_{\Omega_2^{R_0}} \mathds{1}_{[0, \xi)} \left[ \left( AU, AX \right) + \left(B(U), AX \right) + \left(F(U), AX \right) \right] \, dt\\
	\leq \mathds{1}_{\Omega_2^{R_0}} \mathds{1}_{[0, \xi)} \Vert \sigma(U) \Vert_{L_2\left(\Uc, V \right)}^2 \, dt + 2 \left( \mathds{1}_{\Omega_2^{R_0}} \mathds{1}_{[0, \xi)} A^{1/2} \sigma(U) \, dW, A^{1/2} X \right).
\end{multline*}
Let $t > 0$. In the following estimates we implicitly use the bound by $R_0$ \eqref{eq:maximal.existence.bound} and the equivalence of $X$ and $U$ on $\Omega^{R_0}_2$ from \eqref{eq:U.and.X}. Clearly
\[
	2 \Eb \int_0^t \mathds{1}_{\Omega_2^{R_0}} \mathds{1}_{[0, \xi)} \left( AU, AX \right) \, ds = 2 \Eb \int_0^{t \wedge \xi} \mathds{1}_{\Omega_2^{R_0}} \vert AU \vert^2 \, ds \leq 2 R_0 < \infty.
\]
By the estimate on $B$ \eqref{eq:b.estimate1} we have
\[
2 \Eb \int_0^t \mathds{1}_{\Omega_2^{R_0}} \mathds{1}_{[0, \xi)} \left| \left(B(U), AX \right) \right| \, ds \leq C \Eb \int_0^{t \wedge \xi} \mathds{1}_{\Omega_2^{R_0}} \Vert U \Vert \vert AU \vert^2 \, ds \leq C R_0^{1 + 1/2} < \infty.
\]
From the bound \eqref{eq:F.bounded} on $F$ we deduce
\begin{equation*}
	2 \Eb \int_0^t \mathds{1}_{\Omega_2^{R_0}} \mathds{1}_{[0, \xi)} \left| \left(F(U), AX \right) \right| \, ds \leq C \Eb \int_0^{t \wedge \xi} \mathds{1}_{\Omega_2^{R_0}} \left(1 + \Vert U \Vert \right) \vert AU \vert \, ds \leq C_t \left(1 + R_0 \right) < \infty.
\end{equation*}
The estimate \eqref{eq:sigma.bnd.V} on $\sigma(U)$ in $L_2\left( \Uc, V \right)$ leads to
\begin{equation*}
	\Eb \int_0^t \mathds{1}_{\Omega_2^{R_0}} \mathds{1}_{[0, \xi)} \Vert \sigma(U) \Vert^2 \, ds	\leq C \Eb \int_0^{t \wedge \xi} \mathds{1}_{\Omega_2^{R_0}} \left( 1 + \Vert U \Vert^2 + \vert AU \vert^2 \right) \, ds \leq C_t \left(1 + R_0 \right) < \infty.
\end{equation*}
Finally we use the Burkholder-Davis-Gundy inequality \eqref{eq:bdg} and \eqref{eq:sigma.bnd.V} once more to deduce
\begin{multline*}
	2 \Eb \sup_{s \in [0, t]} \left| \int_0^s \mathds{1}_{\Omega_2^{R_0}} \mathds{1}_{[0, \xi)} \left( A^{1/2} \sigma(U) \, dW, A^{1/2} X \right) \right|\\
	\leq C \Eb \left( \int_0^{t \wedge \xi} \mathds{1}_{\Omega_2^{R_0}} \Vert U \Vert^{2} \left( 1 + \Vert U \Vert^2 + \vert AU \vert^2 \right) \, ds \right)^{1/2} \leq C_t \left(1 + R_0 \right) < \infty.
\end{multline*}
Recalling the estimates above we may repeat the deterministic and stochastic maximal regularity-type argument as in the proof of Proposition \ref{prop:global.martingale.existence} to show that for all $t > 0$
\[
	X \in C\left( [0, t]; V \right) \cap L^2 \left( 0, t; D(A) \right) \quad \Pb\text{-a.s.\ on} \ \Omega_2^{R_0}.
\]
Therefore, by \eqref{eq:U.and.X} the limit
\[
	\lim_{t \to \xi-} U(t) = \lim_{t \to \xi-} X(t) = X\left(\xi\right)
\]
exists $\Pb$-almost surely on the set $\Omega_2^{R_0}$. Let us define
\[
	\tilde{U}_0(\omega) =
		\begin{cases}
			\lim_{t \to \xi(\omega)-} U(t, \omega), & \omega \in \Omega_2^{R_0} \ \text{and the limit exists},\\
			0, & \text{otherwise}.
		\end{cases}					
\]
We claim that the function $\tilde{U}_0$ is $\Fc_\xi$-measurable. Indeed, the convergence $\tau_N \nearrow \xi$ $\Pb$-a.s.\ gives
\[
	\tilde{U}_0(\omega) = \lim_{t \to \xi-} \mathds{1}_{\Omega^{R_0}_2}(\omega) U(t, \omega) = \lim_{N \to \infty} \mathds{1}_{\Omega^{R_0}_2}(\omega) U\left( \tau_N(\omega), \omega \right) \quad \Pb\text{-a.s.\ on} \ \Omega_2^{R_0}.
\]
Since $U(\tau_n(\cdot), \cdot)$ is $\Fc_{\tau_N}$-measurable, the random variable $\tilde{U}_0$ is $\Fc_\xi$-measurable. By Proposition \ref{prop:local.path.existence} applied to the stochastic basis $\left( \Omega, \Fc, \lbrace \Fc_{t+\xi} \rbrace_{t \geq 0}, \Pb\right)$ and the $\lbrace \Fc_{t+\xi} \rbrace_{t \geq 0}$-adapted cylindrical Wiener process $W_{\cdot + \xi}$ there exists a local pathwise solution $(\tilde{U}, \tilde{\tau})$ to \eqref{eq:pe.abstract} with initial data $\tilde{U}_0$. Let $N \in \Nb$ be fixed. It is straightforward to check that
\[
	\hat{U}(t, \omega) =
		\begin{cases}
			U(t, \omega), & \omega \in \Omega, \ 0 \leq t \leq \tau_N,\\
			U(t, \omega), & \omega \in \Omega_2^{R_0}, \ \tau_N \leq t < \xi,\\
			\tilde{U}(t - \xi(\omega), \omega), & \omega \in \Omega_2^{R_0}, \ \xi \leq t \leq \xi + \tilde{\tau},
		\end{cases}
\]
is a local pathwise solution of the problem \eqref{eq:pe.abstract} with initial data $U_0$, which is a contradiction with the definition of the maximal solution and the stopping time $\xi$. \qed

\section{Existence of global solution}
\label{sect:global.solution}

In this section let $\Sc$, $W$, $U_0$, $F_U$ and $\sigma$ be as in Theorem \ref{thm:global.existence} and let $\left(U, \xi\right)$ be the maximal solution from Theorem \ref{thm:maximal.existence} with initial data $U_0$. Let $\tau_N$ be the sequence of $\Fb$-adapted stopping times satisfying $\tau_N \nearrow \xi$ $\Pb$-almost surely from Theorem \ref{thm:maximal.existence}.

The strategy of the proof of global existence is similar to the one in \cite{cao2007}. We decompose the horizontal velocity $v$ into its baroclinic and barotropic modes. The equation for the baroclinic mode does not contain pressure and using the fact that the operator $-\laplace_3$ is dissipative in $L^6(\Mc)$ we may estimate $\vert \vt \vert_{L^6}^6$. This is done by employing the stochastic Gronwall Lemma from \cite{glatt-holtz2009} recalled in Proposition \ref{prop:stochastic.gronwall}. After that we use the above estimate to obtain estimates on $\Vert \vb \Vert_{\Vo}^2$, $\vert T \vert_{L^6}^6$ and $\vert \partial_z U \vert^2 \Vert \partial_z U \Vert^2$. Finally, following the argument from \cite{debussche2012} we establish the global existence.

The decomposition of the horizontal velocity into barotropic and baroclinic modes $\bar{v}$ and $\tilde{v}$ respectively is done by defining
\[
	\bar{v} = \Ac_2 v, \qquad \tilde{v} = \left(I - \Ac_3\right) v = \Rc v,
\]
where $\Ac_2, \Ac_3$ are the averaging operators defined in \eqref{eq:averaging.operators}. Using the computations from \cite{cao2007} we observe that the barotropic mode satisfies the equation
\begin{multline}
	\label{eq:vb.barotropic}
	d\vb + \bigg[ -\mu \laplace \vb + \left(\vb \cdot \grad\right) \vb + \Ac_2 \left( \left(\vt \cdot \grad\right) \vt + (\div \vt) \vt \right) + f \vec{k} \times \vb + \grad p_s\\
	- \beta_T g \Ac_2 \grad \int_{z}^0 T \, dz' \bigg] \, dt = \Ac_2 F_v \, dt + \Ac_2 \sigma_1(U) \, dW_1, \qquad \vb(0) = \Ac_2 v_0,
\end{multline}
with
\[
	\div \vb = 0 \ \text{in}\ \Mc_0, \qquad \vb = 0 \ \text{on} \ \partial \Mc_0,
\]
with the equation \eqref{eq:vb.barotropic} understood in the space $H_f$. On the other hand, the baroclinic mode of the horizontal velocity solves the equation
\begin{equation}
	\label{eq:vt.baroclinic}
	\begin{split}
	d\vt &+ \bigg[ -\mu \laplace \vt - \nu \partial_{zz} \vt + \left(\vt \cdot \grad\right) \vt +w(\vt) \partial_z \vt + \left(\vt \cdot \grad\right) \vb + \left(\vb \cdot \grad\right) \vt\\
	&\hphantom{+ \bigg[ \ }- \Ac_3\left( \left(\vt \cdot \grad\right) \vt + \left(\div \vt\right) \vt \right) + f \vec{k} \times \vt - \beta_T g \Rc \grad \left( \int_{z}^0 T \, dz' \right) \bigg] \, dt\\
	&= \Rc F_v \, dt + \Rc \sigma_1(U) \, dW_1, \qquad \vt(0) = \Rc v_0,
	\end{split}
\end{equation}
with the boundary conditions
\[
	\partial_z \vt = 0 \ \text{on} \ \Gamma_i \cup \Gamma_b, \quad \vt = 0 \ \text{on} \ \Gamma_l.
\]

Before we proceed to the estimates let us state the following simple lemma. For a different argument see e.g.\ \cite[Proposition A.1]{glatt-holtz2011}.

\begin{lemma}
\label{lemma:stopping.time.infinity}
Let $f: \Omega \times [0, \infty) \to [0, \infty]$ be such that the function $f(\cdot, \omega)$ is non-decreasing continuous for $\Pb$-almost all $\omega \in \Omega$ and $f(t, \cdot)$ is measurable and almost surely finite for all $t \geq 0$. For $K \geq 0$ let $\rho_K(\omega) = \inf \left\lbrace t \geq 0 \mid f(t, \omega) \geq K \right\rbrace$. Then $\lim_{K \to \infty} \rho_K = \infty$ $\Pb$-almost surely.
\end{lemma}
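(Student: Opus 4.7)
The plan is to exploit the pointwise finiteness of $f(t,\cdot)$ together with monotonicity of $f(\cdot,\omega)$; we never even need continuity of $f$.

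First, I would observe that the family $(\rho_K)_{K\geq 0}$ is non-decreasing in $K$, since for $K_1\leq K_2$ one has $\{t\geq 0 : f(t,\omega)\geq K_2\}\subseteq\{t\geq 0 : f(t,\omega)\geq K_1\}$. Thus the limit $\rho_\infty(\omega)=\lim_{K\to\infty}\rho_K(\omega)\in[0,\infty]$ exists for every $\omega$, and it remains to show $\rho_\infty=\infty$ almost surely.

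Next, fix a countable sequence $t_n\nearrow\infty$, for concreteness $t_n=n\in\Nb$. By hypothesis, for each $n$ there is a full-measure set $\Omega_n\subseteq\Omega$ on which $f(n,\omega)<\infty$. Let $\Omega_\star$ be the intersection of all the $\Omega_n$ with the full-measure set on which $f(\cdot,\omega)$ is non-decreasing and continuous. Then $\Pb(\Omega_\star)=1$ and on $\Omega_\star$ the number $M_n(\omega):=f(n,\omega)$ is finite for every $n\in\Nb$.

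Now fix $\omega\in\Omega_\star$ and $n\in\Nb$. For any $K>M_n(\omega)$, monotonicity of $f(\cdot,\omega)$ yields $f(s,\omega)\leq f(n,\omega)=M_n(\omega)<K$ for every $s\in[0,n]$, so that $\{t\geq 0 : f(t,\omega)\geq K\}\subseteq(n,\infty)$ and therefore $\rho_K(\omega)\geq n$. Consequently $\rho_\infty(\omega)\geq n$, and since $n$ was arbitrary, $\rho_\infty(\omega)=\infty$. This holds on $\Omega_\star$, hence $\Pb$-almost surely, which is the claim. The argument is essentially elementary and I do not expect any obstacle; the only minor subtlety is that one must pass through a countable exceptional set before fixing $\omega$, since the hypothesis only guarantees finiteness of $f(t,\cdot)$ for each individual $t$.
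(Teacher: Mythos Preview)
Your proof is correct and follows the same basic setup as the paper: both of you form the countable intersection of the full-measure sets $\{f(n,\cdot)<\infty\}$ for $n\in\Nb$, and both of you use the monotonicity of $K\mapsto\rho_K$. The difference is in how you conclude. The paper argues by contradiction: assuming $\rho_K$ stays bounded by some $t_0$ on a positive-measure set, it invokes continuity of $f(\cdot,\omega)$ to get $f(\rho_K(\omega),\omega)=K$, and then monotonicity forces $f(t_0,\omega)\geq K$ for all $K$, contradicting finiteness. Your argument is direct: once $K>f(n,\omega)$, monotonicity alone gives $\rho_K(\omega)\geq n$, so $\rho_\infty(\omega)\geq n$ for every $n$. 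Your route is slightly cleaner and, as you note, does not use the continuity hypothesis at all; the paper's contradiction argument genuinely relies on continuity to identify $f(\rho_K)=K$.
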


\begin{proof}
For $N \in \Nb$ let $\Omega_N = \left\lbrace \omega \in \Omega \mid f(N, \omega) < \infty \right\rbrace$ and $\tilde{\Omega} = \bigcap_{N = 1}^{\infty} \Omega_N$. By the assumptions the set $\tilde{\Omega}$ is of full measure. Observe that for $0 \leq K_1 \leq K_2$ we have $\rho_{K_1} \leq \rho_{K_2}$, i.e.\ $\rho_K$ is monotone in $K$. Assume that $\rho_K \nrightarrow \infty$ on some measurable set $\Omega^1$ such that $\Pb \left( \Omega^1 \right) > 0$. By the monotonicity of $\rho_K$ there exists $t_0 \in \Nb$ such that $\rho_K \leq t_0$ for all $K > 0$ on $\Omega^2 \subseteq \Omega^1$ such that $\Pb \left( \Omega^2 \right) > 0$. From the definition of $\rho_K$ and the continuity of $f$ we get $f\left(\omega, \rho_K(\omega) \right) = K$ for all $K > 0$ and $\omega \ni \Omega_2$. Then since $f$ is continuous and non-decreasing, we have $f(t_0) = +\infty$ on a set of non-zero measure, a contradiction.
\end{proof}

\begin{proposition}
\label{prop:l2.estimate}
Let $p \geq 2$ and let $U_0 \in L^p\left(\Omega; \Fc_0, H\right)$. Let us the constant $\eta_0$ from \eqref{eq:sigma.bnd.H} is such that
\begin{equation}
	\label{eq:eta.0}
	\eta_0 < \frac{2}{p\left( 1 + \frac{C_{BDG}^2}{2} \right) - 1}.
\end{equation}
Then for all $t > 0$ we have
\begin{equation}
	\label{eq:l2.estimate}
	\Eb \left[ \sup_{s \in \left[0, t \wedge \xi\right)} \vert U \vert^p + \int_0^{t \wedge \xi} \vert U \vert^{p-2} \Vert U \Vert^2 \, ds \right] \leq C_t \Eb \left[ \vert U_0 \vert^p + 1 \right].
\end{equation}
If moreover $U_0 \in L^4\left( \Omega; \Fc_0, H \right)$ and $\eta_0$ is as in Theorem \ref{thm:global.existence}, so that in particular the condition \eqref{eq:eta.0} holds with $p = 4$, the stopping time $\tau_K^w$ defined by
\begin{equation}
	\label{eq:weak.stopping.time}
	\tau_K^w = \inf \left\lbrace s \geq 0 \mid \ \int_0^{s \wedge \xi} \vert U \vert^{2} \Vert U \Vert^2 + \Vert U \Vert^2 + \vert F_U \vert^2  + \vert F_T \vert^2_{H^{1/2}\left( \Mc \right)} \, dr \geq K \right\rbrace
\end{equation}
satisfies $\tau_K^w \to \infty$ $\Pb$-almost surely as $K \to \infty$.
\end{proposition}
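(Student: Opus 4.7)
The plan is to apply the It\^{o} formula (Theorem \ref{thm:ito.lemma}) to the functional $\psi(U)=|U|^p$ along the solution stopped at $\tau_N$, where $\tau_N \nearrow \xi$ is the localizing sequence from Theorem \ref{thm:maximal.existence}. On each $[0,\tau_N]$ the regularity in \eqref{eq:solution.regularity} permits this. After carefully tracking constants in the BDG step so that the coefficient in front of $\int|U|^{p-2}\|U\|^2\,ds$ matches exactly the threshold \eqref{eq:eta.0}, the standard Gronwall lemma gives a bound uniform in $N$, and monotone convergence sends $N\to\infty$. The claim on $\tau_K^w$ is then a direct consequence of the $p=2$ and $p=4$ cases combined with Lemma \ref{lemma:stopping.time.infinity}.

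The deterministic terms in the It\^{o} identity collapse or absorb without affecting the critical constant. Coercivity of $a$ gives $(AU,U)\geq c\|U\|^2$; the antisymmetry \eqref{eq:b.zero} gives $(B(U),U)=b(U,U,U)=0$, which is well defined because $D(A)\subseteq\Vtwo$; the Coriolis term satisfies $(EU,U)=0$ by its pointwise skew structure; the buoyancy obeys $|(\Apr U,U)|\leq C|U|\,\|U\|$ directly from its definition; the source contributes $|(F_U,U)|\leq|F_U|\,|U|$. The last two are handled by $\varepsilon$-Young inequalities producing $\varepsilon|U|^{p-2}\|U\|^2 + C_\varepsilon|U|^p$ contributions with $\varepsilon>0$ at our disposal, plus a term involving $(\int_0^{t\wedge\tau_N}|F_U|^2\,ds)^{p/2}$ which is $\Pb$-a.s.\ finite by \eqref{eq:f.source}.

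The critical step is the stochastic analysis. The It\^{o} correction together with \eqref{eq:sigma.bnd.H} contributes a coefficient $\tfrac{p(p-1)\eta_0}{2}$ in front of $\int|U|^{p-2}\|U\|^2\,ds$. For the martingale part the Burkholder-Davis-Gundy inequality \eqref{eq:bdg} with exponent $1$ gives
\begin{equation*}
pC_{BDG}\,\Eb\Big(\int_0^{t\wedge\tau_N}\!|U|^{2p-2}\|\sigma(U)\|^2_{L_2(\Uc,H)}\,ds\Big)^{1/2}\leq pC_{BDG}\,\Eb\Big(\sup_{[0,t\wedge\tau_N]}|U|^p\cdot\int_0^{t\wedge\tau_N}\!|U|^{p-2}\|\sigma(U)\|^2_{L_2(\Uc,H)}\,ds\Big)^{1/2},
\end{equation*}
to which the weighted Young inequality $\sqrt{XY}\leq X/(2\alpha)+\alpha Y/2$ is applied with $\alpha\searrow pC_{BDG}/2$. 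The prefactor of $\Eb\sup|U|^p$ is then strictly less than $1$ and absorbs into the left-hand side, while the coefficient multiplying $\eta_0\,\Eb\int|U|^{p-2}\|U\|^2\,ds$ tends to $p^2C_{BDG}^2/4$. Combined with the It\^{o} correction, the total bad coefficient becomes $\tfrac{p\eta_0}{2}\big[(p-1)+\tfrac{pC_{BDG}^2}{2}\big]$, which is strictly less than $pc$ precisely when $\eta_0$ satisfies \eqref{eq:eta.0}; hence this term too absorbs into the dissipation on the left, and Gronwall closes the estimate uniformly in $N$.

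Passing $N\to\infty$ by monotone convergence (both $\sup$ and the integral are monotone in $N$ and pathwise continuous) yields \eqref{eq:l2.estimate}. For $\tau_K^w\to\infty$: the $p=2$ and $p=4$ cases of the first part (permitted by $\eta_0<2/(3+2C_{BDG}^2)$ in \eqref{eq:small.constants.global}) give $\Eb\int_0^{t\wedge\xi}\|U\|^2\,ds<\infty$ and $\Eb\int_0^{t\wedge\xi}|U|^2\|U\|^2\,ds<\infty$, while the $F_U$ and $F_T$ contributions in \eqref{eq:weak.stopping.time} are finite by the hypotheses of Theorem \ref{thm:global.existence}; thus the integrand of \eqref{eq:weak.stopping.time} is $\Pb$-a.s.\ continuous, non-decreasing and finite at every $t$, so Lemma \ref{lemma:stopping.time.infinity} gives $\tau_K^w\to\infty$ a.s. The main obstacle is the sharp BDG+Young balancing: any suboptimal choice of $\alpha$ pushes the admissible threshold for $\eta_0$ strictly below \eqref{eq:eta.0}, so $\alpha$ must be taken asymptotically equal to $pC_{BDG}/2$ rather than simply small or large, and it is exactly this choice that gives the prefactor $p^2C_{BDG}^2/4$ responsible for the sharp form of \eqref{eq:eta.0}.
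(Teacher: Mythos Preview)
Your proof is correct and follows essentially the same approach as the paper: apply the It\^{o} Lemma from Theorem \ref{thm:ito.lemma} to $|U|^p$ stopped at $\tau_N$, use the cancellations $b(U,U,U)=0$ and $(EU,U)=0$, absorb the $\Apr$ and $F_U$ terms by Young, balance the BDG step against the It\^{o} correction to produce exactly the threshold \eqref{eq:eta.0}, close with the standard Gronwall lemma, and pass $N\to\infty$ by monotone convergence; the $\tau_K^w\to\infty$ claim then follows from Lemma \ref{lemma:stopping.time.infinity}. Your explicit limiting choice $\alpha\to pC_{BDG}/2$ in the Young step is the same device the paper encodes via the auxiliary parameter $\delta\in(0,1)$ (cf.\ the derivation of \eqref{eq:Jp32}), and note that with the paper's identification $\Vert U\Vert=|A^{1/2}U|$ one has $(AU,U)=\Vert U\Vert^2$ exactly, so the coercivity constant $c$ in your final comparison should simply be $1$.
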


\begin{proof}
We employ the It\^{o} Lemma from Theorem \ref{thm:ito.lemma} in $H$. We use the cancellation property of $b$ \eqref{eq:b.zero} and the cross product, the self-adjointness of $A$, the Lipschitz continuity of $F$ and the bound on $\sigma$ \eqref{eq:sigma.bnd.H} in $L_2(\Uc, H)$ to obtain
\begin{multline*}
	\left( \delta - \varepsilon \right) \Eb \sup_{s \in \left[0, t \wedge \tau_N \right]} \vert U \vert^p + p \left( 1 - \varepsilon - \frac{p-1}{2} - \frac{p c_{BDG}^2 \eta_0}{4(1-\delta)} \right) \Eb \int_0^{t \wedge \tau_N} \Vert U \Vert^2 \vert U \vert^{p-2} \, ds\\
	\leq C_{\varepsilon} \Eb \left[ \vert U_0 \vert^p + \int_0^{t \wedge \tau_N} 1 + \vert U \vert^p + \vert F_U \vert^2 \, ds \right]
\end{multline*}
for some $\varepsilon > 0$ and $\varepsilon < \delta < 1$. Recalling that $\eta_0$ satisfies \eqref{eq:eta.0}, the standard Gronwall Lemma leads to
\[
	\Eb \left[ \sup_{s \in \left[0, t \wedge \tau_N \right]} \vert U \vert^p + \int_0^{t \wedge \rho_M} \vert U \vert^{p-2} \Vert U \Vert^2 \, ds \right] \leq C_t \left( \Eb \vert U_0 \vert^p + \int_0^{t \wedge \tau_N} 1 + \vert F_U \vert^2 \, ds \right).
\]
The desired bound \eqref{eq:l2.estimate} then follows by passing to the limit w.r.t.\ $N \to \infty$ justified by the assumption on $F_U$ \eqref{eq:f.source} and the monotone convergence theorem. The convergence $\lim_{K \to \infty} \tau_K^w = \infty$ $\Pb$-a.s.\ now follows immediately from Lemma \ref{lemma:stopping.time.infinity}.
\end{proof}

\subsection{$L^6$ estimates}

\begin{proposition}
\label{prop:vt.l6.estimate}
Under the assumptions of Theorem \ref{thm:global.existence}, for all $t > 0$ and $K, N \in \Nb$ the baroclinic mode $\vt$ from \eqref{eq:vt.baroclinic} satisfies
\begin{multline}
	\label{eq:vt.l6.estimate}
	\Eb \left[ \sup_{s \in \left[0, t \wedge \tau^w_K \wedge \tau_N \right]} \vert \vt \vert^6_6 + \int_0^{t \wedge \tau_K^w \wedge \tau_N} \int_{\Mc} \vert \grad_3 \vt \vert^2 \vert \vt \vert^4 \, d\Mc \, ds \right]\\
	\leq C_{t, K} \Eb \left[ \vert \vt(0) \vert_{L^6}^6 + \int_0^{t \wedge \tau_K^w \wedge \tau_N} 1 + \Vert U \Vert^2 + \Vert U \Vert^2 \vert U \vert^2 + \vert F_v \vert^2 \, ds \right].
\end{multline}
Moreover, the stopping time $\tau^{\vt}_{K}$ defined by
\begin{equation}
	\label{eq:vt.stopping.time}
	\tau^{\vt}_{K} = \inf \left\lbrace s \geq 0 \mid \ \int_0^{s \wedge \xi} \vert \vt \vert^6_6 + \left( \int_{\Mc} \vert \grad_3 \vt \vert^2 \vert \vt \vert^4 \, d\Mc \right) \, dr \geq K \right\rbrace
\end{equation}
satisfies $\tau^{\vt}_{K} \to \infty$ $\Pb$-almost surely.
\end{proposition}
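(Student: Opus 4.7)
The strategy is to apply the It\^{o} formula from Theorem~\ref{thm:ito.lemma} to $\psi(\vt) = \vert \vt \vert_{L^6}^6 = \int_\Mc \vert \vt \vert^6 \, d\Mc$ with $\vt$ solving \eqref{eq:vt.baroclinic}, to exploit the $L^6$-dissipation of $-\mu \laplace - \nu \partial_{zz}$ in order to generate the quantity $\int_\Mc \vert \grad_3 \vt \vert^2 \vert \vt \vert^4 \, d\Mc$, to absorb the nonlinear and stochastic contributions into this dissipation and into terms which the stopping time $\tau^w_K$ from Proposition~\ref{prop:l2.estimate} keeps under control, and finally to close the estimate via the stochastic Gronwall Lemma (Proposition~\ref{prop:stochastic.gronwall}). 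The convergence $\tau^\vt_K \to \infty$ will then be a routine application of Lemma~\ref{lemma:stopping.time.infinity}.

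The It\^{o} formula produces six deterministic contributions plus a martingale. The boundary conditions $\vt = 0$ on $\Gamma_l$ and $\partial_z \vt = 0$ on $\Gamma_i \cup \Gamma_b$ allow integration by parts to give
\[
	- \mu \int_\Mc \vert \vt \vert^4 \vt \cdot \laplace \vt \, d\Mc - \nu \int_\Mc \vert \vt \vert^4 \vt \cdot \partial_{zz} \vt \, d\Mc \geq c \int_\Mc \vert \grad_3 \vt \vert^2 \vert \vt \vert^4 \, d\Mc.
\]
Because $(\vt, w(\vt))$ is three-dimensionally divergence-free, the combination $(\vt \cdot \grad)\vt + w(\vt)\partial_z \vt$ tested against $\vert \vt \vert^4 \vt$ integrates to zero; likewise $(\vb \cdot \grad)\vt$ tested against $\vert \vt \vert^4 \vt$ vanishes since $\div \vb = 0$ in $\Mc_0$ and $\vb = 0$ on $\partial \Mc_0$. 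The Coriolis contribution vanishes pointwise. The remaining coupling $(\vt \cdot \grad)\vb$, the averaging correction $\Ac_3[(\vt \cdot \grad)\vt + (\div \vt)\vt]$, the hydrostatic temperature term and the force $F_v$ are estimated by horizontal integration by parts combined with the Cao-Titi type bound
\[
	\int_\Mc \vert \vt \vert^5 \vert \grad \vb \vert \, d\Mc \leq \varepsilon \int_\Mc \vert \grad_3 \vt \vert^2 \vert \vt \vert^4 \, d\Mc + C_\varepsilon \vert \vt \vert^6_{L^6} \Vert U \Vert^2 + C_\varepsilon \Vert U \Vert^2 \vert U \vert^2,
\]
which follows from the equivalence $\Vert \, \vert \vt \vert^3 \Vert_{H^1}^2 \simeq \vert \vt \vert^6_{L^6} + \int_\Mc \vert \vt \vert^4 \vert \grad_3 \vt \vert^2 \, d\Mc$, the three-dimensional Sobolev embedding $H^1 \hook L^6$ applied to $\vert \vt \vert^3$ and an anisotropic H\"{o}lder argument using $\partial_z \vb = 0$.

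For the stochastic term we invoke the structural hypothesis \eqref{eq:sigma.rc.vt}. The It\^{o} correction
\[
	\sum_{k = 1}^\infty \int_\Mc \left( 3 \vert \vt \vert^4 \vert \Rc \sigma_1(U) e_k \vert^2 + 12 \vert \vt \vert^2 \vert \vt \cdot \Rc \sigma_1(U) e_k \vert^2 \right) \, d\Mc
\]
is bounded via H\"{o}lder on $\Mc$ with exponents $3/2$ and $3$ by $C \vert \vt \vert^4_{L^6} \sum_k \vert \Rc \sigma_1(U) e_k \vert^2_{L^6}$, and hence, after Young's inequality and \eqref{eq:sigma.rc.vt}, by $C(1 + \vert \vt \vert^6_{L^6})(1 + \Vert U \Vert^2)$. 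The martingale term is treated by the Burkholder-Davis-Gundy inequality \eqref{eq:bdg} followed by the same H\"{o}lder bound and \eqref{eq:sigma.rc.vt}; the resulting factor of $\Eb \sup \vert \vt \vert^6_{L^6}$ carries a small prefactor that is reabsorbed, plus lower-order quantities.

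Combining all estimates yields, for every pair of stopping times $0 \leq \tau_a \leq \tau_b \leq t \wedge \tau_K^w \wedge \tau_N$,
\[
	\Eb \left[ \sup_{s \in [\tau_a, \tau_b]} \vert \vt \vert^6_{L^6} + \int_{\tau_a}^{\tau_b} \int_\Mc \vert \grad_3 \vt \vert^2 \vert \vt \vert^4 \, d\Mc \, ds \right] \leq C_0 \Eb \left[ \vert \vt(\tau_a) \vert^6_{L^6} + \int_{\tau_a}^{\tau_b} R \cdot \vert \vt \vert^6_{L^6} + Z \, ds \right]
\]
with $R = C(1 + \Vert U \Vert^2)$ and $Z = C(1 + \Vert U \Vert^2 + \Vert U \Vert^2 \vert U \vert^2 + \vert F_v \vert^2)$. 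By the definition of $\tau_K^w$, the quantity $\int_0^{\tau_K^w} R \, ds$ is bounded almost surely by a constant depending only on $t$ and $K$, so Proposition~\ref{prop:stochastic.gronwall} delivers \eqref{eq:vt.l6.estimate}. For the last statement, fix $t > 0$, pass $N \to \infty$ in \eqref{eq:vt.l6.estimate} via monotone convergence and $\tau_N \nearrow \xi$, and then pass $K \to \infty$ using $\tau_K^w \nearrow \infty$ almost surely from Proposition~\ref{prop:l2.estimate}; this shows that the continuous non-decreasing process $s \mapsto \int_0^{s \wedge \xi} \bigl( \vert \vt \vert_{L^6}^6 + \int_\Mc \vert \grad_3 \vt \vert^2 \vert \vt \vert^4 \, d\Mc \bigr) \, dr$ is finite $\Pb$-a.s.\ for every $s \geq 0$, and Lemma~\ref{lemma:stopping.time.infinity} concludes $\tau^{\vt}_K \to \infty$ almost surely. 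The chief technical obstacle is the cross term $(\vt \cdot \grad)\vb$: controlling it requires the full Cao-Titi interpolation sketched above together with the $L^2$ regularity of $U$ from Proposition~\ref{prop:l2.estimate} to keep the lower-order term $\Vert U \Vert^2 \vert U \vert^2$ in $Z$ integrable on $[0, \tau_K^w]$.
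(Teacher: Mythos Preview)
Your proposal is correct and follows essentially the same route as the paper: It\^{o} formula for $\vert \vt \vert_{L^6}^6$, $L^6$-dissipation of the anisotropic Laplacian, the cancellations you list, the Cao--Titi interpolation for the remaining nonlinear couplings, the structural hypothesis \eqref{eq:sigma.rc.vt} for both the It\^{o} correction and the BDG-treated martingale, and closure via the stochastic Gronwall Lemma with the final limit argument through Lemma~\ref{lemma:stopping.time.infinity}. Two minor points where the paper is more explicit: it verifies the hypotheses of Theorem~\ref{thm:ito.lemma} for $\psi(u)=\vert u\vert_{L^6}^6$ (extension of $D\psi$ to $L^2$ via Gagliardo--Nirenberg and the convergence property \eqref{eq:ito.D.psi.convergence}), and its Gronwall weight $R$ is the larger quantity $C(1+\Vert U\Vert^2+\vert U\vert^2\Vert U\Vert^2+\vert F_v\vert^2)$ rather than your $C(1+\Vert U\Vert^2)$; since all of these are controlled by $\tau_K^w$ anyway, neither point affects the argument.
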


For another argument using dissipative properties of the Laplacian in the context of stochastic fluid mechanics see e.g.\ the paper \cite{brzezniak2001} on the stochastic 2D Euler equations.

\begin{proof}
Using the Gagliardo-Nirenberg inequality it is straightforward to check that for $u \in H^2$ and $h \in L^2$
\begin{align}
	\nonumber
	\left| D\vert u \vert_{L^6}^6 (h) \right| &\leq C \vert u^5 \vert_{L^2} \vert h \vert \leq C \vert u^3 \vert_{L^{10/3}}^{5/3} \vert h \vert \leq C \vert u^3 \vert^{2/3} \vert \grad_3 u^3 \vert \vert h \vert\\
	\label{eq:vt.l6.extension}
	&\leq C \vert u \vert_{L^6}^2 \vert \grad_3 u^3 \vert \vert h \vert \leq C \vert u \vert_{L^6}^4 \vert u \vert_{H^2} \vert h \vert
\end{align}
and therefore the operator $D\vert u \vert^6_{L^6}$ can be continuously extended to $L^2$ for $u \in D(A_1)$. Moreover, if $u_n \to u$ in $C\left([0, t], V\right)$ and $u_n$ is bounded\footnote{For $\psi(v) = \vert v \vert^6_{L^6}$ it is sufficient to assume boundedness of $u_n$ instead of convergence in the space $L^2\left( 0, T; W \right)$ to get the required convergence \eqref{eq:ito.D.psi.convergence}.} in $L^2\left(0, t; D(A)\right)$, then by the Gagliardo-Nirenberg inequality
\begin{align*}
	\bigg| \int_0^t [ &D\vert u_n \vert^6_{L^6} - D\vert u \vert^6_{L^6}] (h) \, ds \bigg| \leq C \int_0^t \int_{\Mc} \left|u_n - u \right| \left( \vert u_n \vert^4 + \vert u \vert^4 \right) \vert h \vert \, d\Mc \, ds\\
	&\leq C \int_0^t \left( \vert u_n \vert_{L^{12}}^4 + \vert u \vert^4_{L^{12}} \right) \vert u - u_n \vert_{L^6} \vert h \vert \, ds\\
	&\leq C \int_0^t \left( \vert u_n \vert_{H^2} + \vert u \vert_{H^2} \right) \left( \Vert u_n \Vert^3 + \Vert u \Vert^3 \right) \Vert u - u_n \Vert \vert h \vert \, ds\\
	&\leq C \sup_{s \in [0, t]} \left[ \Vert u_n - u \Vert \left( \Vert u_n \Vert^3 + \Vert u \Vert^3 \right)\right] \left( \Vert u \Vert_{L^2\left(0, t; H^2\right)} + \Vert u_n \Vert_{L^2\left(0, t; H^2\right)} \right) \vert h \vert_{L^2\left(0, t; H \right)} \to 0,
\end{align*}
and therefore the assumptions of Theorem \ref{thm:ito.lemma} are met. Thus, by the It\^{o} Lemma from Theorem \ref{thm:ito.lemma} applied to the equation \eqref{eq:vt.baroclinic} and the function $\vert \cdot \vert^6_{L^6}$ and the usual cancellations we may use integration by parts to get
\begin{align}
	\nonumber
	d \vert \vt \vert_{L^6}^6 &+ 6 \int_{\Mc} \mu \vert \grad \vt \vert^2 \vert \vt \vert^4 + \nu \vert \partial_z \vt \vert^2 \vert \vt \vert^4 \, d\Mc \, dt\\
	\nonumber
	&= - 6 \int_{\Mc} \vert \vt \vert^4 \vt \cdot \left( (\vt \cdot \grad) \vb + \Ac_3\left( (\vt \cdot \grad) \vt + (\div \vt) \vt \right) + \beta_T g \Rc \grad \int_{z}^0 T \, dz' \right) \, d\Mc \, dt \\
	\nonumber
	&\hphantom{= \ } + 6 \int_{\Mc} \Rc F_v \cdot \vert \vb \vert^4 \vb \, d\Mc \, dt + 15 \sum_{k=1}^\infty \int_{\Mc} \vert \vt \vert^4 \left( \Rc \sigma_1(U) e_k \right)^2 \, d\Mc \, dt\\
	\nonumber
	&\hphantom{= \ }  + 6 \sum_{k=1}^\infty \int_{\Mc} \vert \vt \vert^4 \vt \Rc \sigma_1(U) e_k \, d\Mc \, dW_1^k\\
	\label{eq:vt.after.ito}
	&= I_1 \, dt + I_2 \, dt + I_3 \, dt + \sum_{k=1}^\infty I_4^k \, dW_1^k.
\end{align}

To estimate the integral $I_1$ we proceed as in \cite[Section 3.2]{cao2007} and use the boundedness of the operators $\Ac$ and $\Rc$ from \eqref{eq:averaging.operators} to get
\begin{equation}
	\left| I_1 \right| \leq (\mu \wedge \nu) \int_{\Mc} \vert  \grad_3 \vt \vert^2 \vert \vt \vert^4 \, d\Mc + C \vert \vt \vert_{L^6}^6 \left( \vert v \vert^2 + 1\right) \left( \Vert v \Vert^2 + 1 \right) + C \vert T \vert^2 \Vert T \Vert^2.
\end{equation}
Let $\tau_a$ and $\tau_b$ be stopping times such that $0 \leq \tau_a \leq \tau_b \leq t \wedge \tau_N \wedge \tau_K^w$. We estimate the integral $I_2$ by the Gagliardo-Nirenberg inequality similarly as in \eqref{eq:vt.l6.extension} by
\begin{equation}
	\begin{split}
		\left| \int_{\tau_a}^{\tau_b} I_2 \, ds \right| &\leq \int_{\tau_a}^{\tau_b} \vert \Rc F_v \vert \vert \vt^5 \vert \, ds = \int_{\tau_a}^{\tau_b} \vert \Rc F_v \vert \vert \vt^3 \vert_{L^{10/3}}^{5/3} \, ds\\
		&\leq (\mu \wedge \nu) \int_{\tau_a}^{\tau_b} \int_{\Mc} \vert \grad_3 \vt \vert^2 \vert \vt \vert^4 \, d\Mc + C \vert F_v \vert^2 \left( \vert \vt \vert_{L^6}^6 + 1 \right) \, ds.
	\end{split}
\end{equation}
The integral $I_3$ can be estimated using \eqref{eq:vt.l6.extension} as
\begin{equation}
	\begin{split}
		\left| I_3 \right| &\leq \vert \vt \vert^{4}_{L^6} \sum_{k = 1}^\infty \vert \Rc \sigma_1(U) e_k \vert_{L^6}^2 \leq C \vert \vt \vert^{4}_{L^6} \left( 1 + \vert \vt \vert^2_{L^6} \right) \left( 1 + \Vert U \Vert^2 \right)\\
		&\leq C \left( 1 + \Vert U \Vert^2 \right) \left( 1 + \vert \vt \vert_{L^6}^6 \right).
	\end{split}
\end{equation}
We deal with the stochastic integral by the means of the Burkholder-Davis-Gundy inequality \eqref{eq:bdg} and the structural assumption \eqref{eq:sigma.rc.vt}. We get
\begin{equation}
	\label{eq:vt.l6.i4}
	\begin{split}
		\Eb \sup_{s \in \left[\tau_a, \tau_b \right]} \Bigg| \int_{\tau_a}^{\tau_b} &\sum_{k=1}^\infty I_4 \, dW^k_1 \Bigg| \leq C \Eb \left( \int_{\tau_a}^{\tau_b} \sum_{k=1}^\infty \left( \int_{\Mc} \vert \vt \vert^5 \vert \Rc \sigma_1(U) e_k\vert \, d\Mc \right)^2 \, ds \right)^{1/2}\\
		&\leq C  \Eb \left( \int_{\tau_a}^{\tau_b} \vert \vt \vert_{L^6}^{10} \sum_{k=1}^\infty \vert \Rc \sigma_1(U) e_k \vert_{L^6}^2 \, ds \right)^{1/2}\\
		&\leq C \Eb \left[ \left( \sup_{s \in \left[\tau_a, \tau_b\right]} \vert \vt \vert_{L^6}^3 \right) \left( \int_{\tau_a}^{\tau_b} \vert \vt \vert^4_{L^6} \sum_{k=1}^\infty \vert \Rc \sigma_1(U) e_k \vert_{L^6}^2 \, ds \right)^{1/2} \right]\\
	&\leq  \Eb \left[ \frac12 \sup_{s \in \left[\tau_a, \tau_b\right]} \vert \vt \vert_{L^6}^6 + C \int_{\tau_a}^{\tau_b} \left( 1 + \vert \vt \vert_{L^6}^6 \right) \left(1 + \Vert U \Vert^2 \right) \, ds  \right].
	\end{split}
\end{equation}
Collecting the estimates \eqref{eq:vt.after.ito}-\eqref{eq:vt.l6.i4} we obtain
\begin{multline*}
	\Eb \Bigg[ \sup_{s \in \left[\tau_a, \tau_b\right]} \vert \vt \vert_{L^6}^6 + \int_{\tau_a}^{\tau_b} \int_{\Mc}\vert \grad_3 \vt \vert^2 \vert \vt \vert^4 \, d\Mc \, ds \Bigg]\\
	\leq C \Eb \left[ \vert \vt(\tau_a) \vert_{L^6}^6 + \int_{\tau_a}^{\tau_b} \left(1 + \vert \vt \vert_{L^6}^6 \right) \left( 1 + \Vert U \Vert^2 + \vert U \vert^2 \Vert U \Vert^2 + \vert F_v \vert^2 \right) \, ds \right]
\end{multline*}
and by the stochastic Gronwall Lemma from Proposition \ref{prop:stochastic.gronwall} we obtain \eqref{eq:vt.l6.estimate}. Indeed, the use of the stochastic Gronwall Lemma is justified by the regularity of the maximal solution in \eqref{eq:solution.regularity.p} with $U_0 \in L^6\left( \Omega; \Fc_0, V \right)$ and Hypothesis $H_6$, see \eqref{eq:small.constants.maximal}. The right-hand side of \eqref{eq:vt.l6.estimate} can be estimated by
\[
	C_{t, K} \Eb \left[ \vert \vt(0) \vert_{L^6}^6 + \int_0^{t \wedge \tau_K^w \wedge \xi} 1 + \Vert U \Vert^2 + \Vert U \Vert^2 \vert U \vert^2 + \vert F_v \vert^2 \, ds \right] < \infty,
\]
where the finiteness follows from Proposition \ref{prop:l2.estimate}, and thus we may use the monotone convergence theorem to pass to the limit w.r.t.\ $N \to \infty$. In particular, we obtain
\[
	\sup_{s \in \left[0, t \wedge \tau^w_K \wedge \xi\right)} \vert \vt \vert^6_6 + \int_0^{t \wedge \tau_K^w \wedge \xi} \int_{\Mc} \vert \grad_3 \vt \vert^2 \vert \vt \vert^4 \, d\Mc \, ds < \infty \quad \Pb\text{-a.s.}
\]
for all $K \in \Nb$ and $t > 0$. Since $\tau_K^w \to \infty$ $\Pb$-a.s., we have
\begin{equation*}
	\sup_{s \in \left[0, t \wedge \xi\right)} \vert \vt \vert^6_6 + \int_0^{t \wedge \xi} \int_{\Mc} \vert \grad_3 \vt \vert^2 \vert \vt \vert^4 \, d\Mc \, ds < \infty \quad \Pb\text{-a.s.}
\end{equation*}
for all $t > 0$. The convergence $\tau_K^{\vt} \to \infty$ $\Pb$-a.s.\ follows from the convergence of $\tilde{\tau}_K^{\vt} \to \infty$ $\Pb$-a.s., where
\[
	\tilde{\tau}^{\vt}_{K} = \inf \left\lbrace s \geq 0 \mid \ \sup_{r \in \left[0, s \wedge \xi\right)} \vert \vt \vert^6_6 + \int_0^{s \wedge \xi} \int_{\Mc} \vert \grad_3 \vt \vert^2 \vert \vt \vert^4 \, d\Mc \, dr \geq K \right\rbrace,
\]
which is established using Lemma \ref{lemma:stopping.time.infinity}.
\end{proof}

\subsection{$H^1$ estimates}

\begin{proposition}
\label{prop:grad.vb.estimate}
Let the assumptions of Theorem \ref{thm:global.existence} hold and let $\tau_K^1 = \tau_K^{w} \wedge \tau_K^{\vt}$ for $K > 0$. Then for all $t > 0$ and $K, N \in \Nb$ we have
\begin{multline}
	\label{eq:grad.vb.estimate}
	\Eb \left[ \sup_{s \in \left[0, t \wedge \tau_N \wedge \tau^1_{K} \right]} \Vert \vb \Vert^4 + \int_{0}^{t \wedge \tau_N \wedge \tau^1_{K}} \Vert \vb \Vert^2 \vert A_S \vb \vert^2 \, ds \right]\\
	\leq C_{t, K} \Eb  \left[ \Vert v_0 \Vert_V^4 + \int_0^{t \wedge \tau_N \wedge \tau_K^1} 1 + \Vert U \Vert_V^2 + \vert F_v \vert_H^2 \, ds  \right],
\end{multline}
where the symbols $\vert \cdot \vert$ and $\Vert \cdot \Vert$ denote the norms on $\Ho$ and $\Vo$, respectively. Moreover, the stopping time $\tau_K^{\grad \vb}$ defined by
\begin{equation}
	\label{eq:grad.vb.stopping.time}
	\tau^{\grad \vb}_K = \inf \left\lbrace s \geq 0 \mid \int_0^{s \wedge \xi} \Vert \vb \Vert_{H^1\left( \Mc_0, \Rb^2 \right)}^4 \, dr \geq K \right\rbrace
\end{equation}
satisfies $\tau^{\grad \vb}_{K} \to \infty$ $\Pb$-almost surely.
\end{proposition}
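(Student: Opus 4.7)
The plan is to mirror the strategy of Proposition \ref{prop:vt.l6.estimate}: apply the It\^{o} Lemma from Theorem \ref{thm:ito.lemma} to $\psi(\vb) = \Vert \vb \Vert_{\Vo}^4$ along the barotropic equation \eqref{eq:vb.barotropic}, bound every resulting term using the structural assumptions on $\sigma$ together with the $L^2$ control of $U$ from Proposition \ref{prop:l2.estimate} and the $L^6$ control of $\vt$ from Proposition \ref{prop:vt.l6.estimate}, and finally invoke the stochastic Gronwall Lemma (Proposition \ref{prop:stochastic.gronwall}) on $[0, t \wedge \tau_N \wedge \tau^1_K]$ before passing to the limit $N \to \infty$ by monotone convergence. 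The Coriolis contribution vanishes by antisymmetry and the surface pressure $\grad p_s$ vanishes because its pairing is with $A_S \vb \in \Ho$ which is divergence-free.

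The It\^{o} calculation produces on the left-hand side the dissipation $4\mu \Vert \vb \Vert^2 \vert A_S \vb \vert^2 \, dt$. On the right-hand side the 2D nonlinear transport is dispatched by Ladyzhenskaya as
\[
	4 \Vert \vb \Vert^2 \left| \left( (\vb \cdot \grad) \vb, A_S \vb \right) \right| \leq \varepsilon \Vert \vb \Vert^2 \vert A_S \vb \vert^2 + C_\varepsilon \Vert \vb \Vert^4 \cdot \Vert \vb \Vert^2 \vert \vb \vert^2,
\]
where the extra factor is controlled by $C \Vert U \Vert^2 \vert U \vert^2$, hence integrable up to $\tau_K^w$. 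The baroclinic coupling term
$4 \Vert \vb \Vert^2 \bigl| (\Ac_2 [(\vt \cdot \grad) \vt + (\div \vt) \vt], A_S \vb) \bigr|$
is handled following \cite[Section 3.3]{cao2007} by integration by parts, H\"older and Gagliardo-Nirenberg, producing $\varepsilon \Vert \vb \Vert^2 \vert A_S \vb \vert^2 + C_\varepsilon \Vert \vb \Vert^4 \bigl( \vert \vt \vert_{L^6}^6 + \int_\Mc \vert \grad_3 \vt \vert^2 \vert \vt \vert^4 \, d\Mc \bigr)$, with the multiplicative factor integrable up to $\tau_K^{\vt}$ thanks to \eqref{eq:vt.l6.estimate}. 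The temperature term $-\beta_T g \Ac_2 \grad \int_z^0 T \, dz'$ and the forcing $\Ac_2 F_v$ are absorbed similarly into $\varepsilon \Vert \vb \Vert^2 \vert A_S \vb \vert^2 + C_\varepsilon \Vert \vb \Vert^4 \cdot (\vert T \vert^2 + \vert F_v \vert^2)$. For the noise, \eqref{eq:sigma.grad.ac} yields the It\^{o} correction bound
\[
	6 \Vert \vb \Vert^2 \Vert \Ac_2 \sigma_1(U) \Vert^2_{L_2(\Uc, \Vo)} \leq C \Vert \vb \Vert^2 (1 + \Vert U \Vert^2) + 6 \eta_2 \Vert \vb \Vert^2 \vert A_S \vb \vert^2,
\]
while the stochastic integral is handled exactly as in \eqref{eq:vt.l6.i4} by BDG \eqref{eq:bdg} and Young, contributing an additional $\tfrac{C_{BDG}^2 \eta_2}{1 - \delta} \Eb \int \Vert \vb \Vert^2 \vert A_S \vb \vert^2 \, ds$ together with a term $\delta \Eb \sup \Vert \vb \Vert^4$. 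Collecting everything on any $0 \leq \tau_a \leq \tau_b \leq t \wedge \tau_N \wedge \tau^1_K$ gives
\[
	(\delta - \varepsilon) \Eb \sup_{[\tau_a, \tau_b]} \Vert \vb \Vert^4 + \Bigl( 4\mu - \varepsilon - 6 \eta_2 - \tfrac{C_{BDG}^2 \eta_2}{1-\delta} \Bigr) \Eb \int_{\tau_a}^{\tau_b} \Vert \vb \Vert^2 \vert A_S \vb \vert^2 \, ds \leq C \Eb \Bigl[ \Vert \vb(\tau_a) \Vert^4 + \int_{\tau_a}^{\tau_b} R X + Z \, ds \Bigr]
\]
with $X = \Vert \vb \Vert^4$, $R = C(1 + \Vert U \Vert^2 + \vert U \vert^2 \Vert U \Vert^2 + \vert \vt \vert_{L^6}^6)$ and $Z = C(1 + \Vert U \Vert^2 + \vert F_v \vert^2)$.

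The smallness condition $\eta_2 < 1/(C_{BDG}^2 + \tfrac{3}{2})$ from \eqref{eq:small.constants.global} ensures that, for suitable $\varepsilon < \delta$ small, the coefficient on the $\int \Vert \vb \Vert^2 \vert A_S \vb \vert^2$ term stays strictly positive. By the definitions of $\tau_K^w$ and $\tau_K^{\vt}$ one has $\int_0^{\tau_K^1} R \, ds \leq C K$ almost surely, and $\Eb \int_0^{\tau_K^1} R X + Z \, ds < \infty$ follows from the $L^4$ regularity of $U(\cdot \wedge \tau_N)$ in $V$ and of $\vert A U \vert^2 \Vert U \Vert^2$ in $L^1$ provided by Theorem \ref{thm:maximal.existence} under Hypothesis $H_6$. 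Proposition \ref{prop:stochastic.gronwall} therefore yields \eqref{eq:grad.vb.estimate}, and monotone convergence in $N$ together with $\tau_N \nearrow \xi$ extends it to $[0, t \wedge \tau_K^1 \wedge \xi)$. The $\Pb$-almost sure finiteness on $[0, t \wedge \xi)$ then follows by sending $K \to \infty$ using $\tau_K^w, \tau_K^{\vt} \to \infty$, and Lemma \ref{lemma:stopping.time.infinity} applied to the non-decreasing continuous process $s \mapsto \int_0^{s \wedge \xi} \Vert \vb \Vert_{H^1(\Mc_0, \Rb^2)}^4 \, dr$ (noting $\Vert \vb \Vert_{H^1(\Mc_0, \Rb^2)} \leq C \Vert \vb \Vert_{\Vo}$) gives $\tau_K^{\grad \vb} \to \infty$. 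The main technical obstacle I anticipate is precisely the baroclinic coupling term, which must be integrated by parts in exactly the right way to produce a bound of the form $\varepsilon \Vert \vb \Vert^2 \vert A_S \vb \vert^2 + R X$ with $R$ integrable on $[0, \tau_K^{\vt}]$; every other term is either a direct analogue of a computation in the proofs of Propositions \ref{prop:l2.estimate} and \ref{prop:vt.l6.estimate}, or is controlled by the quantitative smallness conditions in \eqref{eq:small.constants.global}.
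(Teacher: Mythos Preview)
Your approach is essentially identical to the paper's: apply the It\^{o} Lemma to $\Vert \vb \Vert_{\Vo}^4$, handle the baroclinic coupling via the Cao--Titi estimate from \cite[Section 3.3.1]{cao2007}, the 2D transport by a Ladyzhenskaya-type bound, the noise via \eqref{eq:sigma.grad.ac} and BDG, then close with the stochastic Gronwall Lemma and Lemma~\ref{lemma:stopping.time.infinity}. One small correction: the Coriolis pairing $\bigl(P_{\Ho}[f\vec{k}\times\vb], A_S\vb\bigr)$ does \emph{not} vanish by antisymmetry (the test function is $A_S\vb$, not $\vb$); the paper simply groups it with the other linear deterministic terms and absorbs it by Young's inequality, which you can do as well without affecting the smallness balance for $\eta_2$.
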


\begin{proof}
We apply the the It\^{o} Lemma from Theorem \ref{thm:ito.lemma} to the equation \eqref{eq:vb.barotropic} with the function $\vert A_S^{1/2} P_{\Ho} \cdot \vert^4$. Recalling that the Stokes operator $A_S$ is self-adjoint we obtain
\begin{equation}
	\label{eq:grad.vb.after.ito}
	\begin{split}
		d\Vert \vb \Vert^4 &+ 4 \Vert \vb \Vert^2 \vert A_S \vb \vert^2 \, dt\\
		&\leq 4 \Vert \vb \Vert^2 \left| \left( P_{\Ho} \Ac_2\left[ (\vt \cdot \grad) \vt + (\div \vt) \vt \right], A_S \vb \right) \right| \, dt + 4 \Vert \vb \Vert^2 \left| \left( P_{\Ho} [ f \vec{k} \times \vb ], A_S \vb \right) \right| \, dt\\
		&\hphantom{\leq \ } + 6 \Vert \vb \Vert^2 \Vert \Ac_2 \sigma_v(U) \Vert^2_{L_2(\Uc, \Vo)} \, dt + 4 \Vert \vb \Vert^2 \left| \left( P_{\Ho} \Ac_2 \int_{z}^0 T \, dz' , A_S \vb \right) \right| \, dt\\
		&\hphantom{\leq \ } + 4 \Vert \vb \Vert^2 \left| \left(P_{\Ho} \Ac_2 F_v, A_S \vb \right) \right| \, dt + 4 \Vert\vb \Vert^2 \left| \left( P_{\Ho} [(\vb \cdot \grad) \vt], A_S \vb \right) \right| \, dt\\
		&\hphantom{\leq \ } + 4 \Vert \vb \Vert^2 \left| \sum_{k=1}^\infty \left( A_S^{1/2} \Ac_2 \sigma_1(U) e_k, A_S^{1/2} \vb \right) \, dW^k_1 \right|\\
		&= \sum_{j=1}^{6} I_j \, dt + \left| \sum_{k=1}^{\infty} I_7^k   \, dW^k_1 \right|.
	\end{split}
\end{equation}
Let $\varepsilon > 0$ be fixed and precisely determined later. Recalling that $\vert \vt \vert_{L^2} \leq C \vert U \vert_H$ and $\Vert \vb \Vert, \vert \grad \vt \vert_{L^2} \leq C \Vert U \Vert_V$ we employ the argument of \cite[Section 3.3.1]{cao2007} we have
\begin{align}
		\nonumber
		I_1 &\leq C \Vert \vb \Vert^2 \vert \grad \vt  \vert_{L^2}^{1/2} \left( \int_{\Mc} \vert \vt \vert^4 \vert \grad_3 \vt \vert^2 \, d\Mc \right)^{1/4} \vert A_S \vb \vert\\
		&\leq \frac{\varepsilon}{3} \vert A_S \vb \vert^2 \Vert \vb \Vert^2 + C_\varepsilon \Vert U \Vert_V^2 + C_\varepsilon \Vert \vb \Vert^4 \left( \int_{\Mc} \vert \vt \vert^4 \vert \grad_3 \vt \vert^2 \, d\Mc \right),\\
		I_6 &\leq C \vert \vb \vert^{1/2} \Vert \vb \Vert^3 \vert A_S \vb \vert^{3/2} \leq \frac{\varepsilon}{3} \Vert \vb \Vert^2 \vert A_S \vb \vert^2 + C_\varepsilon \Vert U \Vert_V^2 \Vert \vb \Vert^4.
\end{align}
Let $\tau_a$ and $\tau_b$ be stopping times such that $0 \leq \tau_a \leq \tau_b \leq t \wedge \tau_N \wedge \tau^1_K$. The remaining deterministic terms can be estimated in a straightforward way by
\begin{multline}
	\int_{\tau_a}^{\tau_b} \sum_{j=2}^5 I_j \, ds \leq \left( \frac{\varepsilon}{3} + 6 \eta_2 \right) \int_{\tau_a}^{\tau_b} \Vert \vb \Vert^2 \vert A_S \vb \vert^2 \, ds\\
	+ C_\varepsilon \int_{\tau_a}^{\tau_b} \left( \Vert U \Vert_V^2 + \vert F_v \vert_H^2 \right) + C_\varepsilon \Vert \vb \Vert^4 \left( \Vert U \Vert_V^2 + \vert F_v \vert_H^2 \right) \, ds.
\end{multline}
Using estimates similar to the ones leading to \eqref{eq:Jp32}, the Burkholder-Davis-Gundy inequality and the assumption \eqref{eq:sigma.grad.ac} on $\Ac_2 \sigma_1$ we get for $\delta \in (0, 1)$
\begin{equation}
\label{eq:grad.vb.stochastic}
	\begin{split}
		4 \Eb \sup_{s \in \left[\tau_a, \tau_b\right]} &\Bigg| \int_{\tau_a}^{\tau_b} \sum_{k=1}^\infty I^k_7 \, dW^k_1 \Bigg| \leq 4 C_{BDG} \Eb \left( \int_{\tau_a}^{\tau_b} \Vert \vb \Vert^6 \Vert \Ac \sigma_1(U) \Vert^2_{L_2(\Uc, \Vo)} \, ds \right)^{1/2}\\	
		&\leq 4 C_{BDG} \Eb \left( \int_{\tau_a}^{\tau_b} \Vert \vb \Vert^6 \left( C \left(1 + \Vert U \Vert_V^2 \right) + \eta_2 \vert A_S \vb \vert^2 \right) \, ds \right)^{1/2}\\
		&\leq \left(1-\delta + \varepsilon \right) \Eb \sup_{s \in \left[\tau_a, \tau_b\right]} \Vert \vb \Vert^4 + \frac{4 C_{BDG}^2 \eta_2}{1-\delta} \Eb \int_{\tau_a}^{\tau_b} \Vert \vb \Vert^2 \vert A_S \vb \vert^2 \, ds\\
		&\hphantom{\leq \ } + C_{\varepsilon} \Eb \int_{\tau_a}^{\tau_b} \left( 1 + \Vert \vb \Vert^4 \right) \left(1 + \Vert U \Vert^2_V \right) \, ds
	\end{split}
\end{equation}
Collecting the estimates \eqref{eq:grad.vb.after.ito}-\eqref{eq:grad.vb.stochastic}, choosing $\delta$ and $\varepsilon$ sufficiently small we use assumption \eqref{eq:small.constants.global} on $\eta_2$ to deduce
\begin{multline*}
	\Eb \left[ \sup_{s \in \left[\tau_a, \tau_b\right]} \Vert \vb \Vert^4 + \int_{\tau_a}^{\tau_b} \Vert \vb \Vert^2 \vert A_S \vb \vert^2 \, ds \right] \leq C \Eb \Vert \vb(\tau_a) \Vert^4 + C \Eb \int_{\tau_a}^{\tau_b} \Vert U \Vert_V^2 + \vert F_v \vert_H^2 \, ds\\
	+ C \Eb \int_{\tau_a}^{\tau_b} \Vert \vb \Vert^4 \left( \Vert U \Vert_H^2 + \vert F_v \vert_H^2 + \int_{\Mc} \vert \vt \vert^4 \vert \grad \vt \vert^2 \, d\Mc \right) \, ds.
\end{multline*}
The estimate \eqref{eq:grad.vb.estimate} is then obtained by the stochastic Gronwall Lemma from Proposition \ref{prop:stochastic.gronwall} which is justified by the assumption \eqref{eq:F.bounded} on $F$ and the definitions of the stopping times $\tau^w_K$ and $\tau^{\vt}_K$ from \eqref{eq:weak.stopping.time} and \eqref{eq:vt.stopping.time}, respectively. The convergence $\tau_K^{\grad \vb} \to \infty$ $\Pb$-a.s.\ as $K \to \infty$ can be shown from the convergence $\tilde{\tau}_K^{\grad \vb} \to \infty$ $\Pb$-a.s., where
\[
	\tilde{\tau}^{\grad \vb}_K = \inf \left\lbrace s \geq 0 \mid \sup_{r \in \left[0, s \wedge \xi\right)} \Vert \vb \Vert^4 + \int_0^{s \wedge \xi} \Vert \vb \Vert^2 \vert A_S \vb \vert^2 \, dr \geq K \right\rbrace,	
\]
similarly as in the proof of Proposition \ref{prop:vt.l6.estimate} from the estimate \eqref{eq:grad.vb.estimate}, the monotone convergence theorem and Lemma \ref{lemma:stopping.time.infinity}.
\end{proof}

\begin{proposition}
\label{prop:vz.estimates}
Let the assumptions of Theorem \ref{thm:global.existence} hold and let $\tau^2_K = \tau_K^w \wedge \tau_K^{\vt} \wedge \tau_K^{\grad \vb}$ for $K > 0$. Then for all $t > 0$, $p \in [2, 4]$ and $K, N \in \Nb$ the following estimate holds
\begin{multline}
	\label{eq:vz.estimate}
	\Eb \left[ \sup_{s \in \left[0, t \wedge \tau_N \wedge \tau_K^{2} \right]} \vert \partial_z v \vert^p + \int_0^{t \wedge \tau_N \wedge \tau_K^{2}} \vert \partial_z v \vert^{p-2} \vert \grad_3 \partial_z v \vert^2 \, ds \right]\\
	\leq C_{t, K, p} \Eb \left[ \Vert v_0 \Vert^p + \int_0^{t \wedge \tau_N \wedge \tau_K^{2}} 1 + \vert F_v \vert^2 + \Vert U \Vert^2 \, ds \right].
\end{multline}
Moreover, the stopping time $\tau_K^{\partial_z v}$ defined by
\begin{equation}
	\label{eq:vz.stopping.time}
	\tau^{\partial_z v}_K = \inf \left\lbrace s \geq 0 \mid \sup_{r \in \left[0, s \wedge \xi\right)} \int_0^{s \wedge \xi} \vert \grad_3 \partial_z v \vert^2 + \vert \partial_z v \vert^2 \vert \grad_3 \partial_z v \vert^2 \, dr \geq K \right\rbrace
\end{equation}
satisfies $\tau_K^{\partial_z v} \to \infty$ as $K \to \infty$ $\Pb$-almost surely.
\end{proposition}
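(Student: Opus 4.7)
The plan is to derive the equation satisfied by $\partial_z v$ by applying $\partial_z$ to the horizontal momentum equation \eqref{eq:pe.v.reform}. Since $p_S$ is $z$-independent and $\partial_z \int_z^0 T\,dz' = -T$, the result is a stochastic parabolic equation for $\partial_z v$ with dissipation $-\mu \laplace \partial_z v - \nu \partial_{zz}(\partial_z v)$, forcing $\partial_z F_v$, noise $\partial_z \sigma_1(U)\,dW_1$, a Coriolis term, a linear $\beta_T g \grad T$ term, and nonlinear terms from $\partial_z[(v\cdot\grad)v + w(v)\partial_z v]$ which after using $\partial_z w = -\div v$ involve only $v$, $\grad v$, $\partial_z v$ and $\partial_{zz} v$. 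I would then apply the It\^{o} Lemma from Theorem \ref{thm:ito.lemma} to $\vert \partial_z v\vert^p$ for $p\in [2,4]$, justified at the Galerkin level and passing to the limit as in Proposition \ref{prop:vt.l6.estimate}; the dissipation produces $p\int_\Mc(\mu \vert \grad \partial_z v\vert^2+\nu \vert \partial_{zz}v\vert^2)\vert \partial_z v\vert^{p-2}\,d\Mc$ on the left, which together with the Poincar\'{e} inequality dominates $\vert \partial_z v\vert^{p-2}\vert \grad_3 \partial_z v\vert^2$.

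For the deterministic nonlinearities I would follow the argument of \cite[Section 3.3.2]{cao2007}: splitting $v = \vb + \vt$ and using anisotropic Sobolev estimates, each nonlinear term either absorbs into a fraction of the dissipation or acquires the shape $\vert \partial_z v\vert^p \Psi(U) + G(U)$, where $\Psi(U)$ is polynomial in $\Vert \vb\Vert_{H^1(\Mc_0)}$, $\vert \vt\vert_{L^6}$, $\int_\Mc \vert \vt\vert^4 \vert \grad_3 \vt\vert^2\,d\Mc$ and $\Vert U\Vert$, while $G(U)$ is polynomial in $\Vert U\Vert$ and $\vert F_v\vert$. The temperature contribution $\beta_T g(\grad T, \partial_z v\vert \partial_z v\vert^{p-2})$ is controlled by $\varepsilon \vert \partial_z v\vert^{p-2}\vert \grad_3 \partial_z v\vert^2 + C_\varepsilon \Vert U\Vert^2\vert \partial_z v\vert^{p-2}$. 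The key point is that integrability of $\Psi(U)$ on $[0,\tau_K^2]$ is by construction implied by the stopping times \eqref{eq:weak.stopping.time}, \eqref{eq:vt.stopping.time}, \eqref{eq:grad.vb.stopping.time}.

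The It\^{o} correction and the stochastic integral are treated in parallel with the template \eqref{eq:Jp32}: using \eqref{eq:sigma.dz} together with the Burkholder-Davis-Gundy inequality \eqref{eq:bdg}, their combined contribution on an interval $[\tau_a,\tau_b] \subseteq [0, t\wedge\tau_N\wedge\tau_K^2]$ is dominated by
\begin{equation*}
	(1-\delta+\varepsilon)\Eb\sup_{[\tau_a,\tau_b]}\vert \partial_z v\vert^p + \frac{p^2 C_{BDG}^2 \eta_3}{1-\delta}\Eb\int_{\tau_a}^{\tau_b}\vert \partial_z v\vert^{p-2}\vert \grad_3 \partial_z v\vert^2\,ds + C_\varepsilon \Eb\int_{\tau_a}^{\tau_b}(1+\vert \partial_z v\vert^p)(1+\Vert U\Vert^2)\,ds.
\end{equation*}
The assumption $2 C_{BDG}^2\eta_3 < \mu\wedge\nu$ in \eqref{eq:small.constants.global}, which is tightest precisely at $p=4$, guarantees that for $\delta$ and $\varepsilon$ small the dissipative term on the right is strictly dominated by the corresponding term on the left, so it can be absorbed.

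Combining everything and applying the stochastic Gronwall Lemma (Proposition \ref{prop:stochastic.gronwall}) with the rate $R = 1+\Psi(U)+\Vert U\Vert^2+\vert F_v\vert^2$, which is finite on $[0,\tau_K^2]$ by construction, yields \eqref{eq:vz.estimate}; passing to the limit $N\to\infty$ by monotone convergence (using Proposition \ref{prop:l2.estimate} for finiteness of the right-hand side) and repeating the Lemma \ref{lemma:stopping.time.infinity} argument of the previous propositions gives $\tau_K^{\partial_z v}\to\infty$ almost surely. The main obstacle I anticipate is the careful Cao-Titi-type nonlinear bookkeeping so that all remaining multiplicative prefactors are exactly those integrable against $\tau_K^2$, rather than producing spurious terms that would require a further stopping time; this is where the choice of cut-off quantities in $\tau_K^w$, $\tau_K^{\vt}$, $\tau_K^{\grad \vb}$ must line up precisely with the anisotropic estimates used for the $(\partial_z v\cdot\grad)v$ and $w(v)\partial_{zz}v$ terms.
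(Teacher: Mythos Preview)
Your proposal is correct and follows essentially the same route as the paper. The one simplification worth noting: the paper does \emph{not} split $v = \vb + \vt$ in the nonlinear estimate. After using the cancellation $\left((v\cdot\grad)\partial_z v + w(v)\partial_{zz} v,\partial_z v\right)=0$, the two surviving nonlinear terms $(\partial_z v\cdot\grad)v$ and $(\div v)\partial_z v$ are integrated by parts in the horizontal variables and bounded via Gagliardo--Nirenberg directly by
\[
	C\,\vert \grad_3 \partial_z v\vert^{3/2}\,\vert \partial_z v\vert^{p-3/2}\,\vert v\vert_{L^6}
	\leq \tfrac{\varepsilon}{2}\vert \partial_z v\vert^{p-2}\vert \grad_3 \partial_z v\vert^2 + C_\varepsilon\,\vert v\vert_{L^6}^4\,\vert \partial_z v\vert^p,
\]
so the Gronwall rate is simply $\vert v\vert_{L^6}^4$ (plus $\Vert U\Vert^2$ and $\vert F_v\vert^2$). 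The splitting $\vert v\vert_{L^6}^4 \leq C(\vert \vt\vert_{L^6}^4 + \Vert \vb\Vert_{H^1(\Mc_0)}^4)$ is invoked only afterward to check that this rate is integrable against $\tau_K^2$. This sidesteps exactly the bookkeeping obstacle you anticipate: no anisotropic Cao--Titi estimate involving $\int_\Mc \vert \vt\vert^4\vert \grad_3\vt\vert^2\,d\Mc$ is needed here. Also, the It\^{o} Lemma is applied directly to the maximal solution via Theorem~\ref{thm:ito.lemma} rather than through a Galerkin step.
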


\begin{proof}
Using the identity
\[
	\partial_z \left[ \left(v \cdot \grad\right) v + w(v) \partial_z v \right] = \left(\partial_z v \cdot \grad\right) v + \left(v \cdot \grad\right) \partial_z v - \left(\div v \right) \partial_z v + w(v) \partial_{zz} v
\]
and the cancellation
\[
	\left( \left(v \cdot \grad \right) \partial_z v + w(v) \partial_{zz} v, \partial_z v \right) = 0,
\]
by the It\^{o} Lemma from Theorem \ref{thm:ito.lemma} applied to the equation for $v$ and $\vert \partial_z \cdot \vert_{L^2}^p $we get the estimate
\begin{equation}
	\label{eq:vz.after.ito}
	\begin{split}
		d\vert \partial_z v \vert^p &+ p \left( \mu \wedge \nu \right) \vert \partial_z v \vert^{p-2} \vert \grad_3 \partial_z v \vert^2 \, dt\\
		&\leq p \vert \partial_z v \vert^{p-2} \left| \left( \left( \partial_z v \cdot \grad \right) v, \partial_z v \right) \right| \, dt + p \vert \partial_z v \vert^{p-2} \left| \left(  \left( \div v \right) \partial_z v, \partial_z v \right) \right| \, dt\\
		&\hphantom{= \ } + p \vert \partial_z v \vert^{p-2} \left| \left( g\beta_T \grad T, \partial_z v \right) \right| \, dt + \tfrac{p(p-1)}{2} \vert \partial_z v \vert^{p-2} \Vert \partial_z \sigma_1(v, T, S) \Vert_{L_2(\Uc, L^2)}^2 \, dt\\
		&\hphantom{= \ }+ p \vert \partial_z v \vert^{p-2} \left| \left( \partial_z F_v, \partial_z v\right) \right| \, dt + p \vert \partial_z v \vert^{p-2} \left| \sum_{k=1}^\infty \int_{\Mc} \partial_z \sigma_1(U) e_k \partial_z v \, d\Mc \, dW_1^k \right| \\
		&= \sum_{j=1}^5 I_j \, dt + \left| \sum_{k=1}^\infty I_6^k \, dW_1^k \right|.
	\end{split}
\end{equation}
Let $\varepsilon > 0$ be fixed. Integrating by parts w.r.t.\ the horizontal coordinates and using the Gagliardo-Nirenberg inequality we get
\begin{equation}
	I_1 + I_2 \leq C \vert \grad_3 \partial_z v \vert^{3/2} \vert \partial_z v \vert^{p-2 + 1/2} \vert v \vert_{L^6} \leq \frac{\varepsilon}{2} \vert \grad_3 \partial_z v \vert^2 \vert \partial_z v \vert^{p-2} + C_\varepsilon \vert v \vert_{L^6}^4 \vert \partial_z v \vert^p.
\end{equation}
Let $\tau_a$ and $\tau_b$ be stopping times satisfying $0 \leq \tau_a \leq \tau_b \leq t \wedge \tau_N \wedge \tau_K^2$. From the bound \eqref{eq:sigma.dz} on $\partial_z \sigma_1(U)$ in $L_2\left(\Uc, L^2 \right)$ we readily deduce
\begin{multline}
	\int_{\tau_a}^{\tau_b} \sum_{j=3}^5  I_j \, ds \leq \left( \frac{\varepsilon}{2} + 6 \eta_3 \right) \int_{\tau_a}^{\tau_b} \vert \partial_z v \vert^{p-2} \vert \grad_3 \partial_z v \vert^2 \, ds\\
	+ C_\varepsilon \int_{\tau_a}^{\tau_b} \vert U \vert^2 + \vert F_v \vert^2 + \vert \partial_z v \vert^p \left( \vert F_v \vert^2 + \Vert U \Vert^2 \right) \, ds.
\end{multline}
Similarly as in \eqref{eq:grad.vb.stochastic} we estimate the stochastic integral by the Burkholder-Davis-Gundy inequality \eqref{eq:bdg} using the bound \eqref{eq:sigma.dz} on $\partial_z \sigma_1(U)$ once more and obtain
\begin{align}
	\nonumber
	p \Eb &\sup_{s \in \left[\tau_a, \tau_b\right]} \left| \int_{\tau_a}^{\tau_b} \sum_{k=1}^\infty I^k_6 \, dW^k_1 \right| \leq pc_{BDG} \Eb \left( \int_{\tau_a}^{\tau_b} \vert \partial_z v \vert^{2p-2} \Vert \partial_z \sigma_1(U) \Vert^2_{L_2(\Uc, L^2)} \, ds \right)^{1/2}\\
	\nonumber
	&\leq \left( 1 - \delta + \varepsilon \right) \Eb \sup_{s \in \left[\tau_a, \tau_b\right]} \vert \partial_z v \vert^p + C_\varepsilon \Eb \int_{\tau_a}^{\tau_b} \left( 1 + \vert \partial_z v \vert^p \right)\left( 1 + \Vert v \Vert^2 + \Vert T \Vert^2 \right) \, ds \\
	\label{eq:vz.stochastic}
	&\hphantom{\leq \ } + \frac{p^2c_{BDG}^2 \eta_3}{4(1-\delta)} \Eb \int_{\tau_a}^{\tau_b} \vert \partial_z v \vert^{p-2} \vert \grad_3 \partial_z v \vert^2 \, ds
\end{align}
for some $\delta \in (0, 1)$. Then we collect the estimates \eqref{eq:vz.after.ito}-\eqref{eq:vz.stochastic} and, recalling the bound \eqref{eq:small.constants.global} on $\eta_3$, we choose $\delta > \varepsilon > 0$ similarly as in the proof of Proposition \ref{prop:grad.vb.estimate} to get
\begin{multline*}
	\Eb \left[ \sup_{s \in [\tau_a, \tau_b]} \vert \partial_z v \vert^4 + \int_{\tau_a}^{\tau_b} \vert \partial_z v \vert^2 \vert \grad_3 \partial_z v \vert^2 \, ds \right] \leq C \Eb \vert \partial_z v(\tau_a) \vert^4 + C\Eb \int_{\tau_a}^{\tau_b} 1 + \vert F_v \vert^2 + \Vert U \Vert^2 \, ds\\
	+ C \Eb \int_{\tau_a}^{\tau_b} \vert \partial_z v \vert^4 \left( 1 + \Vert v \Vert^2 + \Vert T \Vert^2 + \vert F_v \vert^2 + \vert v \vert_{L^6}^4 \right) \, ds.
\end{multline*}
Since we can control $\vert v \vert_{L^6}^4 = \vert \vt + \vb \vert_{L^6}^6 \leq C( \vert \vt \vert_{L^6}^4 + \vert \grad \vb \vert^4)$, the estimate \eqref{eq:vz.estimate} is obtained similarly as in the proof of Proposition \ref{prop:l2.estimate} by the stochastic Gronwall Lemma, see Proposition \ref{prop:stochastic.gronwall}. The convergence $\tau^{\partial_z v}_K \to \infty$ $\Pb$-a.s.\ can be proven by first establishing the convergence $\tilde{\tau}^{\partial_z v}_K = \inf \left\lbrace s \geq 0 \mid f_p(s, \omega) \geq K \right\rbrace \to \infty$ $\Pb$-a.s.\ for $p = 2, 4$, where
\[
	f_p(s, \omega) = \sup_{r \in \left[0, s \wedge \xi \right)} \vert \partial_z v \vert^p + \int_0^{s \wedge \xi} \vert \partial_z v \vert^{p-2} \vert \grad_3 \partial_z v \vert^2 \, dr,
\]
similarly as in the previous proofs by the means of the monotone convergence theorem and Lemma \ref{lemma:stopping.time.infinity}.
\end{proof}

\begin{proposition}
\label{prop:T.estimates}
Let the assumptions of Theorem \ref{thm:global.existence} hold and for $K > 0$ let $\tau^3_K = \tau_K^w \wedge \tau_K^{\vt} \wedge \tau_{K}^{\grad \vb} \wedge \tau_K^{\partial_z v}$. Then for all $t > 0$ and $K, N \in \Nb$ the following estimate holds
\begin{multline}
	\label{eq:T.estimates}
	\Eb \left[ \sup_{s \in \left[0, t \wedge \tau_N \wedge \tau_K^3 \right]} \vert T \vert_{L^6}^6 + \sup_{s \in \left[0, t \wedge \tau_N \wedge \tau_K^3 \right]} \vert \partial_z T \vert^4 + \int_{0}^{t \wedge \tau_N \tau_K^3} \int_{\Mc} \vert \grad_3 T \vert^2 \vert T \vert^4 \, d\Mc \, ds \right]\\
	 + \Eb \left[ \int_{0}^{t \wedge \tau_N \tau_K^3} \vert T \vert_{L^6\left( \Gamma_i \right)}^6 + \vert \partial_z T \vert^2 \vert \grad_3 \partial_z T \vert^2 + \vert \partial_z T \vert^2 \vert \partial_z T \vert^2_{L^2\left( \Gamma_i \right)} \, ds \right]\\
	 \leq C_{t, K} \Eb \left[ \vert T(0) \vert_{L^6}^6 + \vert \partial_z T(0) \vert^4 + \int_0^{t \wedge \tau_N \wedge \tau_K^3} 1 + \Vert U \Vert^2 + \vert F_T \vert^2 + \vert F_T \vert^2_{L^2\left( \Gamma_i \right)} \, ds \right].
\end{multline}
Moreover, the stopping time $\tau_K^T$ defined by
\begin{equation}
	\label{eq:T.stopping.time}
	\tau^T_K = \inf \left\lbrace s \geq 0 \mid \int_0^{s \wedge \xi} \vert T \vert_{L^6}^6 + \vert \partial_z T \vert^2 \Vert \partial_z T \Vert^2 \, dr \geq K \right\rbrace
\end{equation}
satisfies $\tau^T_K \to \infty$ as $K \to \infty$ $\Pb$-almost surely.
\end{proposition}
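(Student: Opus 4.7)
The plan is to apply the It\^{o} Lemma from Theorem \ref{thm:ito.lemma} to the temperature equation in \eqref{eq:pe.T.reform} twice, once with the function $\psi_1(T) = \vert T \vert_{L^6}^6$ and once with $\psi_2(T) = \vert \partial_z T \vert^4$, combine the two resulting differential inequalities, and then close with the stochastic Gronwall Lemma from Proposition \ref{prop:stochastic.gronwall}. The structure closely mirrors the arguments of Propositions \ref{prop:vt.l6.estimate} and \ref{prop:vz.estimates}. The required continuity property \eqref{eq:ito.D.psi.convergence} for the It\^{o} Lemma can be verified as in those proofs using Gagliardo--Nirenberg, thanks to the integrability of $U$ recorded in \eqref{eq:solution.regularity.p} with $U_0 \in L^6(\Omega; \Fc_0, V)$ and Hypothesis $H_6$.

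The main new features compared with Propositions \ref{prop:vt.l6.estimate} and \ref{prop:vz.estimates} are the boundary-condition contributions on $\Gamma_i$. Testing the dissipation $-(\mu \laplace + \nu \partial_{zz})T$ against $\vert T \vert^4 T$ and integrating by parts, the Robin condition $\nu \partial_z T + \alpha T = 0$ on $\Gamma_i$ produces, in addition to the bulk $6 \int_{\Mc}\vert T \vert^4(\mu \vert \grad T \vert^2 + \nu \vert \partial_z T \vert^2)$, the non-negative boundary term $6\alpha \vert T \vert_{L^6(\Gamma_i)}^6$ appearing on the left-hand side of \eqref{eq:T.estimates}. Likewise, differentiating the equation in $z$ and testing the equation for $\partial_z T$ against $\vert \partial_z T \vert^2 \partial_z T$ yields the dissipative boundary contribution $(\alpha / \nu)\vert \partial_z T \vert^2 \vert \partial_z T \vert_{L^2(\Gamma_i)}^2$ via the same relation $\partial_z T = -(\alpha/\nu)T$ on $\Gamma_i$. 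The convective contributions $(v \cdot \grad) T + w(v) \partial_z T$ tested against $\vert T \vert^4 T$ cancel by the extended divergence-free property $\div_3 (v, w(v)) = 0$ together with $w(v) = 0$ on $\Gamma_i \cup \Gamma_b$ and $v = 0$ on $\Gamma_l$, while the convective terms appearing in the $\partial_z T$ equation are absorbed into the bulk dissipation by Gagliardo--Nirenberg, picking up multiplicative factors of the form $\vert v \vert_{L^6}^4 \leq C(\vert \vt \vert_{L^6}^4 + \Vert \vb \Vert^4)$ and $\vert \partial_z v \vert^2 \Vert \partial_z v \Vert^2$, both of which are integrable on $[0, \tau_K^3]$ by the definitions of $\tau_K^{\vt}, \tau_K^{\grad \vb}, \tau_K^{\partial_z v}$.

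The stochastic terms are controlled by the Burkholder--Davis--Gundy inequality \eqref{eq:bdg} exactly as in \eqref{eq:vt.l6.i4} and \eqref{eq:vz.stochastic}: for the $\vert T \vert_{L^6}^6$ step the structural assumption \eqref{eq:sigma.rc.t} provides $\sum_k \vert \sigma_2(U) e_k \vert_{L^6}^2 \leq C(1 + \vert T \vert_{L^6}^2)(1 + \Vert U \Vert^2)$, and for the $\vert \partial_z T \vert^4$ step the assumption \eqref{eq:sigma.dz} combined with the smallness condition on $\eta_3$ in \eqref{eq:small.constants.global} lets the $\eta_3$-term be absorbed into the left-hand side. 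The deterministic forcing $(F_T, \vert T \vert^4 T)$ is bounded by $\vert F_T \vert^2 (1 + \vert T \vert_{L^6}^6)$ plus an absorbable fraction of $\int_{\Mc} \vert \grad_3 T \vert^2 \vert T \vert^4$, exactly as the $I_2$ term in Proposition \ref{prop:vt.l6.estimate}. In the $\vert \partial_z T \vert^4$ estimate, the forcing term $(\partial_z F_T, \vert \partial_z T \vert^2 \partial_z T)$ is handled by integration by parts in $z$, which produces a boundary contribution on $\Gamma_i$ of the form $\int_{\Gamma_i} F_T \cdot (\vert \partial_z T \vert^2 \partial_z T)$ that is bounded via Young's inequality by $C \vert F_T \vert_{L^2(\Gamma_i)}^2 (1 + \vert \partial_z T \vert^4) + \varepsilon \vert \partial_z T \vert^2 \vert \partial_z T \vert_{L^2(\Gamma_i)}^2$; this is precisely what makes the additional assumption $F_T \in L^2(\Omega, L^2(0, t; L^2(\Gamma_i)))$ necessary.

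Setting $X = \vert T \vert_{L^6}^6 + \vert \partial_z T \vert^4$, summing the two estimates between stopping times $0 \leq \tau_a \leq \tau_b \leq t \wedge \tau_N \wedge \tau_K^3$, and choosing the Young parameters $\delta, \varepsilon$ small, the inequality fits the framework of Proposition \ref{prop:stochastic.gronwall} with the $R$-process comprising $1 + \Vert U \Vert^2 + \vert \vt \vert_{L^6}^4 + \Vert \vb \Vert^4 + \vert \partial_z v \vert^2 \Vert \partial_z v \Vert^2 + \vert F_v \vert^2 + \vert F_T \vert^2 + \vert F_T \vert_{L^2(\Gamma_i)}^2$, which has finite integral on $[0, t \wedge \tau_N \wedge \tau_K^3]$ by the definitions of $\tau_K^w, \tau_K^{\vt}, \tau_K^{\grad \vb}, \tau_K^{\partial_z v}$ and the assumptions on $F_U$. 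The estimate \eqref{eq:T.estimates} follows after passing to the limit $N \to \infty$ by monotone convergence, and the convergence $\tau_K^T \to \infty$ $\Pb$-almost surely is obtained by applying Lemma \ref{lemma:stopping.time.infinity} to the auxiliary stopping times that also include the $\sup$-part. The principal technical obstacle is the careful bookkeeping of boundary integrals on $\Gamma_i$ produced by the Robin condition when differentiating in $z$, ensuring that every such contribution is either non-negative (hence kept on the left-hand side) or absorbable into bulk dissipation plus $\vert F_T \vert_{L^2(\Gamma_i)}^2$ times $X$.
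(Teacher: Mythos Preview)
Your proposal is correct and follows essentially the same approach as the paper: apply the It\^{o} Lemma to $\vert T \vert_{L^6}^6$ and $\vert \partial_z T \vert^4$, track the Robin-boundary contributions on $\Gamma_i$, control the convective terms via Gagliardo--Nirenberg (producing factors bounded by the stopping times $\tau_K^{\vt}$, $\tau_K^{\grad \vb}$, $\tau_K^{\partial_z v}$), handle the stochastic terms with BDG together with the structural assumptions \eqref{eq:sigma.rc.t} and \eqref{eq:sigma.dz}, and close with the stochastic Gronwall Lemma. The only cosmetic difference is that the paper bounds the $\Gamma_i$-boundary contribution from $\partial_z F_T$ via the trace estimate $\vert T \vert_{L^2(\Gamma_i)} \leq C \Vert U \Vert$ rather than absorbing it into the dissipative boundary term $\vert \partial_z T \vert^2 \vert \partial_z T \vert_{L^2(\Gamma_i)}^2$ as you do; either route works.
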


\begin{proof}
Let $\tau_a$ and $\tau_b$ be stopping times such that $0 \leq \tau_a \leq \tau_b \leq t \wedge \tau_N \wedge \tau_K^3$. First, similarly as in the proof of Proposition \ref{prop:vt.l6.estimate} we deduce
\begin{multline}
	\label{eq:T.l6.estimate}
	\Eb \left[ \sup_{s \in [\tau_a, \tau_b]} \vert T \vert_{L^6}^6 + \int_{\tau_a}^{\tau_b} \left( \int_{\Mc} \vert \grad_3 T \vert^2 \vert T \vert^4 \, d\Mc \right) + \vert T \vert^6_{L^6 \left( \Gamma_i \right)} \, ds \right]\\
	 \leq C \Eb \left[ \vert T(\tau_a)\vert^6_{L^6} + \int_{\tau_a}^{\tau_b} \vert T \vert^6_{L^6} \left( 1 + \vert F_T \vert^2 + \Vert U \Vert^2 \right) + 1 + \vert F_T \vert^2 + \Vert U \Vert^2 \, ds \right].
\end{multline}
Secondly, by the It\^{o} Lemma from Theorem \ref{thm:ito.lemma} we get the estimate
\begin{align*}
	d\vert \partial_z T \vert^4 &+ 4 \mu \vert \partial_z T \vert^2 \vert \grad \partial_z T \vert^2 \, dt+ 4 \nu \vert \partial_z T \vert^2 \vert \partial_{zz} T \vert^2 \, dt + 4\alpha \vert \partial_z T \vert^2 \vert \partial_z T \vert^2_{L^2(\Gamma_i)} \, dt\\
	&\leq 4 \vert \partial_z T \vert^2 \left| \left( \partial_z \left( \left(v \cdot \grad \right) T \right), \partial_z T \right) \right| \, dt + 4 \vert \partial_z T \vert^2 \left| \left( \partial_z \left( w(v) \partial_z T \right), \partial_z T \right) \right| \, dt\\
	&\hphantom{\leq \ } + 4 \vert \partial_z T \vert^2 \left| \left( \partial_z F_T, \partial_z T \right) \right| \, dt + 6 \vert \partial_z T \vert^2 \Vert \partial \sigma_2(U) \Vert^2_{L_2(\Uc, L^2)} \, dt\\
	&\hphantom{\leq \ }+ 4 \vert \partial_z T \vert^2 \left| \sum_{k=1}^\infty \left( \partial_z \sigma_2(v, T) e_k, \partial_z T \right) \, dW_2^k \right|\\
	&= \sum_{j = 1}^{4} I_j \, dt + \left| \sum_{k=1}^\infty I_5^k \, dW_2^k \right|.
\end{align*}
Repeating the integration by parts procedure from of \cite[Proposition 5.3]{debussche2012} we use the Gagliardo-Nirenberg inequality to obtain
\begin{align}
	\nonumber
	I_1 &= 4 \vert \partial_z T \vert^2 \left| \sum_{j = 1}^2 \int_{\Mc} v_j \partial_{jz} T \partial_z T - \partial_{zj} v_j T \partial_z T - \partial_z v_j T \partial_{zj} T \, d\Mc \right|\\
	\nonumber
	&\leq 4 \vert \partial_z T \vert^2 \left( \vert v \vert_{L^6} \vert \grad \partial_z T \vert \vert \partial_z T \vert_{L^3} + \vert \grad \partial_z v \vert \vert T \vert_{L^6} \vert \partial_z T \vert_{L^3} + \vert \grad \partial_z T \vert \vert T \vert_{L^6} \vert \partial_z v \vert_{L^3} \right)\\
	\nonumber
	&\leq \frac{\varepsilon}{3} \vert \partial_z T \vert^2 \vert \partial_{zz} T \vert^2 + C_\varepsilon \left( \vert T \vert^4_{L^6} + \vert v \vert^4_{L^6} \right)\\
	&\hphantom{\leq \ }+ C_\varepsilon \vert \partial_z T \vert^4 \left( \vert v \vert^4_{L^6} + \vert \grad_3 \partial_z v \vert^2 + \vert \grad_3 \partial_z T \vert^2 \vert \partial_z v \vert^2 \right)
\end{align}
and
\begin{align}
	\nonumber
	I_2 &= 4 \vert \partial_z T \vert^2 \left| \sum_{j=1}^2 \int_{\Mc} v_j \partial_{jz} T \partial_z T \, d\Mc \right| \leq C \vert \grad_3 \partial_z T \vert^{3/2} \vert \partial_z T \vert^{5/2} \vert v \vert_{L^6}\\
	&\leq \frac{\varepsilon}{3} \vert \partial_z T \vert^2 \vert \grad_3 \partial_z T \vert^2 + C_\varepsilon \vert \partial_z T \vert^4 \vert v \vert^4_{L^6}.
\end{align}
Using integration by parts again we infer
\begin{equation}
	\begin{split}
		\int_{\tau_a}^{\tau_b} I_3 \, ds &= 4 \int_{\tau_a}^{\tau_b} \vert \partial_z T \vert^2 \left| \left( F, \partial_{zz} T \right) + \alpha \int_{\Gamma_i}  T F_T \, d\Gamma_i \right| \, ds\\
		&\leq \int_{\tau_a}^{\tau_b} \frac{\varepsilon}{3} \vert \partial_z T \vert^2 \vert \grad_3 \partial_z T \vert^2 + C_\varepsilon \left( \vert \partial_z T \vert^4 + 1 \right) \left(1 + \Vert U \Vert^2 + \vert F_T \vert^2 + \vert F_T \vert^2_{L^2\left( \Gamma_i \right)} \right) \, ds.
	\end{split}
\end{equation}
Similarly as in \eqref{eq:grad.vb.stochastic} we employ the Burkholder-Davis-Gundy inequality \eqref{eq:bdg} and the bound on $\partial_z \sigma_2(U)$ in $L_2\left( \Uc, L^2 \right)$ \eqref{eq:sigma.dz} to deduce
\begin{equation}
	\label{eq:Tz.stochastic}
	\begin{split}
	\Eb \sup_{s \in [\tau_a, \tau_b]} &\left| \sum_{k=1}^\infty I_5^k \, dW_2^k \right| \leq 4 C_{BDG} \Eb \left( \int_{\tau_a}^{\tau_b} \vert \partial_z T \vert^6 \Vert \partial_z \sigma_2(U) \Vert^2_{L_2(\Uc, L^2)} \, ds \right)^{1/2}\\
	&\leq C_t \Eb \int_{\tau_a}^{\tau_b} \left(1 + \vert \partial_z T \vert^4 \right) \left( 1 + \Vert U \Vert^2 \right) \, ds + \left( 1 - \delta + \varepsilon \right) \Eb \sup_{s \in [\tau_a, \tau_b]} \vert \partial_z T \vert^4\\
	&\hphantom{\leq \ } + \frac{4C_{BDG}^2 \eta_3}{1-\delta} \Eb \int_{\tau_a}^{\tau_b} \vert \partial_z T \vert^2 \vert \grad_3 \partial_z T \vert^2 \, ds.
	\end{split}
\end{equation}
Collecting the estimates \eqref{eq:T.l6.estimate}-\eqref{eq:Tz.stochastic} while recalling that $\eta_3$ satisfies \eqref{eq:small.constants.global} and choosing $\delta > \varepsilon > 0$ sufficiently small, we use the bound \eqref{eq:sigma.dz} once more to obtain
\begin{align*}
	\Eb &\left[ \sup_{s \in [\tau_a, \tau_b]} \left( \vert T \vert_{L^6}^6 + \vert \partial_z T \vert^4 \right) + \int_{\tau_a}^{\tau_b} \left( \int_{\Mc} \vert \grad_3 T \vert^2 \vert T \vert^4 \, d\Mc \right) + \vert T \vert^6_{L^6 \left( \Gamma_i \right)} \, ds \right]\\
	&\qquad + \Eb \int_{\tau_a}^{\tau_b} \vert \grad_3 \partial_z T \vert^2 \vert \partial_z T \vert^2 + \vert \partial_z T \vert^2 \vert \partial_z T \vert^2_{L^2\left( \Gamma_i \right)} \, ds\\
	&\leq C \Eb \left[ \vert T(\tau_a)\vert^6_{L^6} + \vert \partial_z T \vert^4 \right] + + C\Eb \int_{\tau_a}^{\tau_b} 1 + \vert F_T \vert^2 + \vert F_T \vert^2_{L^2\left( \Gamma_i \right)} + \Vert U \Vert^2 \, ds\\
	&\hphantom{\leq \ }+ C \Eb \int_{\tau_a}^{\tau_b} \left( \vert T \vert^6_{L^6} + \vert \partial_z T \vert^4 \right) \left( 1 + \vert F_T \vert^2 + \vert F_T \vert^2_{L^2\left( \Gamma_i \right)} + \Vert U \Vert^2 \right) \, ds\\
	 &\hphantom{\leq \ } + C \Eb \int_{\tau_a}^{\tau_b}  \left( \vert T \vert^6_{L^6} + \vert \partial_z T \vert^4 \right) \left( \vert v \vert^4_{L^6} + \vert \grad_3 \partial_z v \vert^2 + \vert \grad_3 \partial_z v \vert^2 \vert \partial_z v \vert^2 \right) \, ds.
\end{align*}
The estimate \eqref{eq:T.estimates} then follows by the stochastic Gronwall Lemma from Proposition \ref{prop:stochastic.gronwall}. The convergence of the stopping times $\tau_K^T$ follows from the same argument as in the previous proofs.
\end{proof}

\subsection{Proof of Theorem \ref{thm:global.existence}}
\label{sec:proof.global.existence}

The proof will be complete once we establish
\begin{equation}
	\label{eq:final.proof}
	\Pb(\lbrace \xi < \infty \rbrace) = 0.
\end{equation}
The technique of the proof comes from \cite[Theorem 3.2]{debussche2012}. For $K \in \Nb$ we define
\[
	\tau_K^U = \tau_K^w \wedge \tau_K^{\vt} \wedge \tau_{K}^{\grad \vb} \wedge \tau_K^{\partial_z v} \wedge \tau^T_K,
\]
where the stopping times $\tau_K^w$, $\tau_K^{\vt}$, $\tau_{K}^{\grad \vb}$, $\tau_K^{\partial_z v}$ and $\tau^T_K$ are defined in \eqref{eq:weak.stopping.time}, \eqref{eq:vt.stopping.time}, \eqref{eq:grad.vb.stopping.time}, \eqref{eq:vz.stopping.time} and \eqref{eq:T.stopping.time}, respectively. by Propositions \ref{prop:l2.estimate}, \ref{prop:vt.l6.estimate}, \ref{prop:grad.vb.estimate}, \ref{prop:vz.estimates} and \ref{prop:T.estimates} we infer that $\tau^U_K \to \infty$ $\Pb$-a.s.\ as $K \to \infty$.

Before we embark on proving \eqref{eq:final.proof} let us establish the estimate
\begin{multline}
	\label{eq:U.V.estimate}
	\Eb \left[ \sup_{t \in \left[0, t \wedge \tau_N \wedge \tau^U_K\right]} \Vert U \Vert^2 + \int_0^{t \wedge \tau_N \wedge \tau_K^U} \vert AU \vert^2 \, ds \right]\\
	\leq C_{t, K} \Eb \left[ \Vert U(0) \Vert^2	+ \int_0^{t \wedge \tau_N \wedge \tau_K^U} 1 + \vert F_U \vert^2 \, ds \right].
\end{multline}
for all $K, N \in \Nb$. By the It\^{o} Lemma from Theorem \ref{thm:ito.lemma} we have
\begin{equation}
	\label{eq:grad.v.after.ito}
	\begin{split}
		d\Vert U \Vert^2 &+ 2 \vert AU \vert^2 \, dt \leq 2 \left| b(U, U, AU) \right| \, dt + 2 \left| \left( A^{1/2} F(U), A^{1/2} U \right) \right| \, dt\\
		&\hphantom{= \ } + \Vert A^{1/2}\sigma(U) \Vert^2_{L_2(\Uc, H)} \, dt + 2 \left| \sum_{k=1}^\infty \left( A^{1/2} \sigma_1(U) e_k, A^{1/2} U \right) \, dW^k \right| \\
		&= \sum_{j = 1}^3 I_j \, dt + \left| \sum_{k=1}^\infty I_4^k \, dW^k_1 \right|.
	\end{split}
\end{equation}
Let $\varepsilon > 0$ be fixed. By the estimate on $B$ \eqref{eq:b.estimate3} we have
\begin{equation}
	I_1 \leq \frac{\varepsilon}{2} \vert AU \vert^2 + C_\varepsilon \Vert U \Vert^2 \left( \vert v \vert_{L^6}^4 + \vert \partial_z U \vert^2 \Vert \partial_z U \Vert^2 \right).
\end{equation}
From the definition of $F(U)$ \eqref{eq:F.definition} we deduce
\begin{equation}
	\int_{\tau_a}^{\tau_b} I_2 \, ds \leq \frac{\varepsilon}{2} \int_{\tau_a}^{\tau_b} \vert AU \vert^2 \, ds + C_\varepsilon \int_{\tau_a}^{\tau_b} 1 + \Vert U \Vert^2 + \vert F_U \vert^2 \, ds.
\end{equation}
Similarly as in the previous proofs we use the Burkholder-Davis-Gundy inequality \eqref{eq:bdg} and the bound on $\sigma(U)$ in $L_2\left( \Uc, V \right)$ \eqref{eq:sigma.bnd.V} to get
\begin{multline}
	\label{eq:U.V.stochastic}
	\Eb \sup_{s \in [\tau_a, \tau_b]} \left| \sum_{k=1}^\infty I_4^k \, dW^k \right| \leq C_{t, \varepsilon} \Eb \int_{\tau_a}^{\tau_b} 1 + \Vert U \Vert^2 \, ds\\
	+ \left( 1 - \delta + \varepsilon \right) \Eb \sup_{s \in [\tau_a, \tau_b]} \Vert U \Vert^2 + \frac{C_{BDG}^2 \eta_1}{1-\delta} \Eb \int_{\tau_a}^{\tau_b} \vert AU \vert^2 \, ds.
\end{multline}
Collecting the estimates \eqref{eq:grad.v.after.ito}-\eqref{eq:U.V.stochastic} and choosing $0 < \varepsilon < \delta$ sufficiently small we get
\begin{multline*}
	\Eb \left[ \sup_{s \in [\tau_a, \tau_b]} \Vert U \Vert^2 + \int_{\tau_a}^{\tau_b} \vert AU \vert^2 \, ds \right] \leq C \Eb \Vert U(\tau_a) \Vert^2\\
	+ C \Eb \left[ \int_{\tau_a}^{\tau_b} \Vert U \Vert^2 \left( 1 + \vert v \vert^4_{L^6} + \vert \partial_z U \vert^2 \Vert \partial_z U \Vert^2 \right)\, ds + \int_{\tau_a}^{\tau_b} 1 + \vert F \vert^2 \, ds \right].
\end{multline*}
The estimate \eqref{eq:U.V.estimate} is then established by the stochastic Gronwall Lemma from Proposition \ref{prop:stochastic.gronwall}. Since the constant on the right-hand side of \eqref{eq:U.V.estimate} is independent of $N$, we may use the monotone convergence theorem to infer that for all $t > 0$ and $K \in \Nb$
\begin{equation*}
	\Eb \left[ \sup_{0 \in \left[0, t \wedge \xi \wedge \tau_K^U \right)} \Vert U \Vert^2 + \int_0^{t \wedge \xi \wedge \tau_K^U } \vert AU \vert^2 \, ds \right] \leq C_{t, K} \Eb \left[  \Vert U(0) \Vert^2 + \int_0^{t \wedge \xi} 1 + \vert F \vert^2 \, ds \right] < \infty.
\end{equation*}
In particular we deduce that for all $t > 0$ and $K \in \Nb$
\begin{equation}
	\label{eq:U.V.contradiction}
	\sup_{s \in \left[0, t \wedge \xi \wedge \tau_K^U \right)} \Vert U \Vert^2 + \int_0^{t \wedge \xi \wedge \tau_K^U } \vert AU \vert^2 \, ds < \infty \qquad \Pb\text{-a.s.}.
\end{equation}

Finally, we are ready to prove \eqref{eq:final.proof}. Since from the definition $\tau_K^U \to \infty$ $\Pb$-almost surely, it suffices to establish that for all $K \in \Nb$ $\tau^U_K \leq \xi$ $\Pb$-almost surely. Arguing by contradiction, let us assume that $\Pb \left( \left\lbrace \tau_K^U > \xi \right\rbrace \right) > 0$ for some $K \in \Nb$. Since
\[
	\left\lbrace \tau_K^U > \xi \right\rbrace = \bigcup_{t \in \Nb}^\infty	\left\lbrace \tau_K^U \wedge t > \xi \right\rbrace,
\]
we deduce that $\Pb \left( \left\lbrace \tau_K^U \wedge t > \xi \right\rbrace \right) > 0$ for some $t \in \Nb$. Now on the set $\left\lbrace \tau_K^U \wedge t > \xi \right\rbrace$ the explosion property \eqref{eq:explosion} on $\xi < t < \infty$ implies that
\[
	\sup_{s \in \left[0, t_0 \wedge \xi \wedge \tau_K^U \right)} \Vert U \Vert^2 + \int_0^{t_0 \wedge \xi \wedge \tau_K^U } \vert AU \vert^2 \, ds \geq \sup_{s \in \left[0, \xi \right)} \Vert U \Vert^2 + \int_0^{\xi} \vert AU \vert^2 \, ds = +\infty
\]
on a set of non-zero measure. This contradicts \eqref{eq:U.V.contradiction}. \qed

\section*{Acknowledgement}
\addcontentsline{toc}{section}{Acknowledgement}

J.S.\ wishes to express gratitude for the kind hospitality of University of York, where the research leading to this work has been done, and to the Programme for research and mobility support of young researchers of the Czech Academy of Sciences, which made the stay in York possible.

\appendix

\section{The It\^{o} Lemma}
\label{app:ito}

In this Section we prove a generalization of the It\^{o} Lemma  formulated and proved by  Pardoux in \cite[Theorem 1.2]{pardoux1979}. We have used this  version of the notoriously known result to establish the higher integrability of solutions in Section \ref{sect:maximal.solution} and the estimates necessary for the global existence in Section \ref{sect:global.solution}.

Let $\left( \Omega, \Fc, \Fb, \Pb \right)$ be a stochastic basis with filtration $\Fb = \Fct$ and let $\Uc$ be a separable Hilbert space with an orthonormal basis $\lbrace e_k \rbrace_{k = 1}^\infty$. Let $W$ be an $\Fb$-adapted cylindrical Wiener process with reproducing kernel Hilbert space $\Uc$. Let $V$ and $H$ be separable Hilbert spaces such that the embedding $V \hook H $ is dense and compact and let $A: D(A) \to H$ be an unbounded self-adjoint densely defined bijective operator on $H$ such that $(AU, U^\sharp)_H = (U, U^\sharp)_V$ for $U, U^\sharp \in V$. Following a standard argument one can show that there exists an orthonormal basis $\lbrace E_k \rbrace_{k = 1}^\infty$ of $H$ consisting of eigenvaules of $A$, in particular $E_k \in D(A)$ and $AE_k = \lambda_k E_k$ for all $k \in \Nb$. We may then define the fractional powers of $A$ by
\[
	D\left( A^{\alpha} \right) = \left\lbrace U \in H \mid \sum_{k = 1}^\infty \lambda_k^{2\alpha} \left| \left(U, E_k \right)_H \right|^2 < \infty \right\rbrace, \quad A^\alpha U = \sum_{k = 1}^\infty \lambda^\alpha_k \left( U, E_k \right) E_k, \ U \in D\left(A^\alpha\right).
\]
For simplicity let use denote $X_{\alpha} = D(A^{\alpha/2})$ and $\Vert U \Vert_{\alpha} = \Vert U \Vert_{D(A^{\alpha/2})}$. One can identify $X_0 = H$, $X_1 = V$ and $X_2 = D(A)$.

For an auxiliary separable Hilbert space $K$ and $T > 0$ let $M^2(0, T; K)$ be the Hilbert space consisting of all equivalence classes of progressively measurable $K$-valued processes $\psi$ such that $\Eb \int_0^T \Vert \psi_t \Vert_K^2 < \infty$.

\begin{theorem}
\label{thm:ito.lemma}
Let $\alpha \geq 0$ and $p \geq 2$ and let $U^0 \in L^p\left( \Omega; \Fc_0, X_{\alpha + 1} \right)$. Let $T > 0$ be fixed and let $\tau$ be a stopping time such that $\tau \leq T$. Let $v: [0, T] \times \Omega \to X_\alpha$ and $g: [0, T] \times \Omega \to L_2(\Uc, X_{\alpha+1})$ be progressively measurable processes such that
\begin{equation}
	\label{eq:ito.v.g.regularity}
	\Eb \int_0^T \mathds{1}_{[0, \tau]}(s) \left( \Vert v_s \Vert^2_{\alpha} + \Vert g_s \Vert^2_{L_2(\Uc, X_{\alpha+1})} \right) \, ds < \infty.
\end{equation}
Let $U: [0, T] \times \Omega \to V$ be a progressively measurable stochastic process such that
\begin{equation}
	\label{eq:ito.U.regularity.1}
	U(\cdot \wedge \tau) \in L^2\left( \Omega, C\left( [0, T], X_{\alpha+1} \right) \right), \qquad \mathds{1}_{[0, \tau]}U  \in L^2\left( \Omega, L^2\left(0, T; X_{\alpha+2} \right) \right),
\end{equation}
and let $U$ satisfy the equation in the space $X_\alpha$
\begin{equation}
	\label{eq:ito.spde}
	U_{t \wedge \tau} + \int_0^{t \wedge \tau} A U_s + v_s \, ds = U_0 +  \int_0^{t \wedge \tau} g_s \, dW_s, \ U_0 = U^0, \ \Pb\text{-a.s.\ for all} \ t \in [0, T].
\end{equation}
Let $\psi: X_{\alpha+1} \to \Rb$ be such that
\begin{enumerate}
	\item $\psi \in C^2(X_{\alpha+1}, \Rb)$,
	\item \label{item:ito.assumption2}$\psi$ and the Fr\'{e}chet derivatives $D \psi$ and $D^2 \psi$ are uniformly continuous and bounded on balls in $X_{\alpha+1}$,
	\item if $u \in X_{\alpha+2}$, then $D\psi(u) \in L(X_{\alpha+1}, \Rb)$ can be extended to $D\psi(u) \in L(X_\alpha, \Rb)$. Moreover,
	\begin{enumerate}
		\item for all $t > 0$ and $R \geq 0$ exists $C_{R, t}$ such that if $u \in C\left( [0, t], X_{\alpha+1} \right) \cap L^2\left(0, t; X_{\alpha+2} \right)$ is such that
		\[
			\Vert u \Vert_{C\left( [0, t], X_{\alpha+1} \right)} + \Vert u \Vert_{L^2\left( 0, t; X_{\alpha+2} \right)} \leq R,
		\]
		then one has
		\begin{equation}
			\label{eq:ito.D.psi.bound}
			\sup_{s \in [0, t]} \Vert D\psi(u_s) \Vert_{L\left( X_\alpha, \Rb \right)} \leq C_{R, t},
		\end{equation}
		\item for all $t > 0$ if $u^n \to u$ in $C\left( [0, t], X_{\alpha+1} \right) \cap L^2\left(0, t; X_{\alpha+2} \right)$, then for all $h \in L^2\left(0, t; X_{\alpha} \right)$
		\begin{equation}
			\label{eq:ito.D.psi.convergence}
			\int_0^{s} D\psi\left(u^n_r\right)(h_r) \, dr \to \int_0^{s} D\psi\left(u_r \right)(h_r) \, dr, \quad s \in [0, t].
		\end{equation}
	\end{enumerate}	
\end{enumerate}
Then for all $t \in [0, T]$ $\Pb$-almost surely
\begin{multline}
	\label{eq:ito.claim}
	\psi\left(U_{t \wedge \tau}\right) = \psi\left( U_0 \right) + \int_0^{t \wedge \tau} D\psi\left( U_s \right) \left( AU_s + v_s \right) \, ds + \int_0^{t \wedge \tau} D\psi\left( U_s \right)\left( g_s \, dW_s \right)\\
	+ \frac{1}{2} \sum_{k=1}^\infty \int_0^{t \wedge \tau} D^2 \psi\left( U_s \right) \left( g_s e_k, g_s e_k \right) \, ds.
\end{multline}
\end{theorem}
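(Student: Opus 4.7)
The strategy is the classical one for variational It\^{o} formulas: approximate $U$ by a spectral Galerkin truncation, apply the finite-dimensional It\^{o} formula, and then pass to the limit using the structural assumptions on $\psi$. Let $P_n$ denote the orthogonal projection of $H$ onto $H^n := \lin \{ E_1, \dots, E_n \}$ and set $U^n = P_n U$, $v^n = P_n v$, $g^n = P_n g$. Since $P_n$ commutes with $A$ and its fractional powers, applying $P_n$ to \eqref{eq:ito.spde} shows that $U^n$ satisfies the finite-dimensional SDE
\begin{equation*}
	U^n_{t \wedge \tau} + \int_0^{t \wedge \tau} A U^n_s + v^n_s \, ds = P_n U_0 + \int_0^{t \wedge \tau} g^n_s \, dW_s
\end{equation*}
in $H^n$. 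To obtain uniform control throughout the limiting procedure we localize by the stopping times $\tau_R = \tau \wedge \inf \{ t \geq 0 : \Vert U_t \Vert_{\alpha+1}^2 + \int_0^t \Vert U_s \Vert_{\alpha+2}^2 \, ds \geq R \}$, which satisfy $\tau_R \nearrow \tau$ a.s.\ by \eqref{eq:ito.U.regularity.1}; this reduces the proof of \eqref{eq:ito.claim} to the case in which all relevant quantities are pathwise bounded.

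Since all norms on $H^n$ are equivalent and $\psi$ is of class $C^2$ on $X_{\alpha+1}$ with derivatives uniformly continuous and bounded on balls, the classical finite-dimensional It\^{o} formula directly produces the analogue of \eqref{eq:ito.claim} with $U^n$, $P_n U_0$, $v^n$, $g^n$ in place of $U$, $U_0$, $v$, $g$. The core of the argument is the passage $n \to \infty$. Writing $U_s = \sum_k (U_s, E_k) E_k$ with $\Vert U_s \Vert_\beta^2 = \sum_k \lambda_k^\beta |(U_s, E_k)|^2$, dominated convergence yields $U^n \to U$ in $C([0, T]; X_{\alpha+1}) \cap L^2(0, T; X_{\alpha+2})$ on $[0, \tau_R]$, together with $v^n \to v$ in $L^2(0, T; X_\alpha)$ and $g^n \to g$ in $L^2(0, T; L_2(\Uc, X_{\alpha+1}))$. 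Continuity of $\psi$ handles $\psi(U^n_{t \wedge \tau_R}) \to \psi(U_{t \wedge \tau_R})$; uniform continuity and boundedness of $D^2 \psi$ on balls, combined with $\Vert g^n \Vert_{L_2(\Uc, X_{\alpha+1})} \leq \Vert g \Vert_{L_2(\Uc, X_{\alpha+1})}$ and dominated convergence, handle the quadratic-variation term; and the stochastic integral converges in $L^2(\Omega)$ via the It\^{o} isometry applied to $D\psi(U^n) \circ g^n$, whose $L_2(\Uc, \Rb)$-norm is controlled by $\Vert D\psi(U^n) \Vert_{X_{\alpha+1}^\ast} \Vert g^n \Vert_{L_2(\Uc, X_{\alpha+1})}$.

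The genuinely delicate point, and the main obstacle, is the convergence of the deterministic integral $\int_0^{t \wedge \tau_R} D\psi(U^n_s)(A U^n_s) \, ds$, because $A U_s$ lives only in $X_\alpha$ and hence cannot be paired with the usual derivative $D\psi(U_s) \in L(X_{\alpha+1}, \Rb)$; this is precisely the obstruction that the operator-norm bound \eqref{eq:ito.D.psi.bound} and the convergence property \eqref{eq:ito.D.psi.convergence} are designed to overcome. Writing
\begin{multline*}
	\int_0^{t \wedge \tau_R} D\psi(U^n_s)(A U^n_s) \, ds - \int_0^{t \wedge \tau_R} D\psi(U_s)(A U_s) \, ds\\
	= \int_0^{t \wedge \tau_R} D\psi(U^n_s)(A U^n_s - A U_s) \, ds + \int_0^{t \wedge \tau_R} \left[ D\psi(U^n_s) - D\psi(U_s) \right](A U_s) \, ds,
\end{multline*}
the first summand is bounded by $\sup_n \sup_{s \leq T} \Vert D\psi(U^n_s) \Vert_{L(X_\alpha, \Rb)} \cdot \Vert A U^n - A U \Vert_{L^1(0, T; X_\alpha)}$, which vanishes thanks to the uniform bound \eqref{eq:ito.D.psi.bound} (available due to the localization) and the strong convergence $A U^n \to A U$ in $L^2(0, T; X_\alpha)$; the second summand vanishes directly by \eqref{eq:ito.D.psi.convergence} applied with $h = A U \in L^2(0, T; X_\alpha)$. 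The analogous splitting with $h = v$ handles the $D\psi(U^n_s)(v^n_s)$ term. Combining all these convergences yields \eqref{eq:ito.claim} at each $\tau_R$, and letting $R \to \infty$ via dominated convergence completes the proof.
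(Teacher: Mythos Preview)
Your approach is correct and genuinely different from the paper's. The paper proceeds in two steps: first it assumes the extra regularity $g \in M^2\bigl(0,T; L_2(\Uc, X_{\alpha+2})\bigr)$, splits $U_{\cdot\wedge\tau} = \tilde U + M$ with $M_t = \int_0^{t} \mathds{1}_{[0,\tau]} g_s\,dW_s$ and $\tilde U$ of bounded variation, and applies Pardoux's It\^{o} lemma for a bounded-variation process plus a martingale (Lemma~\ref{lemma:ito.pardoux}) after mollifying $\tilde U$ in time; in a second step it removes the extra regularity on $g$ by approximating $g$ in $M^2\bigl(0,T; L_2(\Uc, X_{\alpha+1})\bigr)$. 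By contrast, you bypass both the martingale splitting and the time mollification by projecting everything onto $H^n$ via $P_n$, which is available here precisely because $A$ has a discrete spectral resolution and $P_n$ commutes with all powers of $A$; this collapses the problem to a single finite-dimensional It\^{o} step plus one limit. Your localization by $\tau_R$ plays the same role as the paper's decomposition $\Omega = \bigcup_R \Omega_R$, and your handling of the critical term $\int D\psi(U^n_s)(AU^n_s)\,ds$ via the splitting into $D\psi(U^n)(AU^n - AU)$ and $[D\psi(U^n)-D\psi(U)](AU)$ is exactly how the paper uses assumptions \eqref{eq:ito.D.psi.bound}--\eqref{eq:ito.D.psi.convergence}. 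The paper's route is closer to Pardoux's original argument and would survive in settings without a spectral basis; yours is shorter and more self-contained in the present setup.
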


We will need the following version of It\^{o} Lemma for processes with bounded variation from \cite[Lemma 1.3]{pardoux1979}.

\begin{lemma}
\label{lemma:ito.pardoux}
Let $\tilde{g} \in M^2\left( 0, T; L_2\left( \Uc, X_{\alpha+2} \right) \right)$. Let $\Vc: [0, T] \times \Omega \to X_{\alpha+1}$ be stochastic process with trajectories of bounded variation and let $M_t = \int_0^t \tilde{g}_s \, dW_s$, $t\in [0,T]$. Let $\psi \in C^2 \left( X_{\alpha+1}, \Rb \right)$ satisfy the assumptions 1.-3.\ from Theorem \ref{thm:ito.lemma}. Then for all $t \in [0, T]$ $\Pb$-almost surely
\begin{multline*}
	\psi\left(\Vc_t + M_t\right) = \psi(\Vc_0) + \int_0^t D\psi\left(\Vc_s + M_s \right)\left( d\Vc_s \right) + \int_0^t D\psi\left(\Vc_s + M_s \right) \left( \tilde{g}_s \, dW_s \right)\\
	+ \frac12 \sum_{k=1}^\infty \int_0^t D^2 \psi\left( \Vc_s + M_s \right) \left(\tilde{g}_s e_k,\tilde{g}_s e_k \right) \, ds.
\end{multline*}
\end{lemma}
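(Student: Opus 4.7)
The plan is to reduce to a finite-dimensional setting by Galerkin projection onto the eigenbasis of $A$, apply the classical finite-dimensional It\^{o} formula, and pass to the limit. Let $P_n$ be the orthogonal projection in $H$ onto $H^n = \lin\lbrace E_1, \ldots, E_n \rbrace$. Since $\lbrace E_k \rbrace$ consists of eigenvectors of $A$, $P_n$ commutes with every fractional power $A^{\beta/2}$ and acts as a contraction on each $X_\beta$, $\beta \geq 0$. Set $\Vc^n_t = P_n \Vc_t$ and $\tilde{g}^n_s = P_n \tilde{g}_s$, so that $M^n_t := P_n M_t = \int_0^t \tilde{g}^n_s \, dW_s$ by \cite[Proposition 4.30]{dpz}. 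The process $\Vc^n + M^n$ takes values in the finite-dimensional space $H^n$, $\Vc^n$ has bounded variation there pathwise with total variation dominated (in $X_{\alpha+1}$) by that of $\Vc$.

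Since $H^n$ is finite-dimensional, the classical It\^{o} formula for continuous semimartingales (with no absolute continuity required of the bounded-variation part) applied to $\psi\vert_{H^n}$ yields $\Pb$-a.s.
\begin{multline*}
\psi(\Vc^n_t + M^n_t) = \psi(\Vc^n_0) + \int_0^t D\psi(\Vc^n_s + M^n_s)(d\Vc^n_s) + \int_0^t D\psi(\Vc^n_s + M^n_s)(\tilde{g}^n_s \, dW_s)\\
+ \tfrac{1}{2} \sum_{k=1}^\infty \int_0^t D^2 \psi(\Vc^n_s + M^n_s)(\tilde{g}^n_s e_k, \tilde{g}^n_s e_k) \, ds.
\end{multline*}
Pass $n \to \infty$. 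The BDG inequality \eqref{eq:bdg} applied to the tail $(I-P_n)M$, together with $\tilde{g} \in M^2(0, T; L_2(\Uc, X_{\alpha+2}))$, gives $M^n \to M$ in probability uniformly on $[0, T]$ in $X_{\alpha+2}$, while $\Vc^n \to \Vc$ pointwise in $X_{\alpha+1}$. Along a subsequence $\Vc^n + M^n \to \Vc + M$ uniformly on $[0, T]$ in $X_{\alpha+1}$ $\Pb$-a.s., so these trajectories lie pathwise in a (random) bounded set $B \subset X_{\alpha+1}$. Continuity of $\psi$ handles the left-hand side; the It\^{o} correction converges by dominated convergence from uniform continuity of $D^2\psi$ on $B$ and the convergence $\tilde{g}^n \to \tilde{g}$ in $L_2(\Uc, X_{\alpha+1})$ (which follows from dominated convergence on $\sum_k \Vert (I-P_n)\tilde{g} e_k \Vert_{X_{\alpha+1}}^2$); the stochastic integral converges in probability by BDG combined with the ideal property of Hilbert--Schmidt operators and uniform continuity of $D\psi$ on $B$.

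The main obstacle is the convergence of the pathwise Stieltjes integral $\int_0^t D\psi(\Vc^n_s + M^n_s)(d\Vc^n_s)$ to $\int_0^t D\psi(\Vc_s + M_s)(d\Vc_s)$, because both integrator and integrand depend on $n$. The remedy is the split
\[
\int_0^t \bigl[D\psi(\Vc^n_s + M^n_s) - D\psi(\Vc_s + M_s)\bigr](d\Vc^n_s) + \int_0^t D\psi(\Vc_s + M_s)\bigl(d\Vc^n_s - d\Vc_s\bigr).
\]
The first piece is controlled pathwise by the total variation $|\Vc|_{BV([0, t]; X_{\alpha+1})}$ multiplied by $\sup_{s \in [0, t]} \Vert D\psi(\Vc^n_s + M^n_s) - D\psi(\Vc_s + M_s) \Vert_{L(X_{\alpha+1}, \Rb)}$, which vanishes by uniform continuity of $D\psi$ on $B$ (assumption~2 of Theorem~\ref{thm:ito.lemma}). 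For the second piece, write the vector measure $d\Vc$ in polar form $d\Vc = h \, d|d\Vc|$ with $h: [0, t] \to X_{\alpha+1}$ Bochner-measurable satisfying $\Vert h_s \Vert_{X_{\alpha+1}} = 1$ $|d\Vc|$-a.e., so that the integral equals $\int_0^t D\psi(\Vc_s + M_s)(h_s - P_n h_s) \, d|d\Vc|(s)$. Since $P_n \to I$ strongly on $X_{\alpha+1}$ the integrand converges pointwise to zero, is dominated by $2 \sup_{s \in [0, t]} \Vert D\psi(\Vc_s + M_s) \Vert_{L(X_{\alpha+1}, \Rb)}$ (finite by boundedness of $D\psi$ on $B$), and $|d\Vc|$ is a finite positive Borel measure on $[0, t]$ pathwise, so dominated convergence yields the claim. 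Combining both pieces and passing through a subsequence establishes the identity in the statement.
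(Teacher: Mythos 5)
Your argument is essentially correct, but it is worth noting that the paper does not prove this lemma at all: it is quoted verbatim from Pardoux \cite[Lemma 1.3]{pardoux1979}, so there is no in-text proof to match. Your self-contained route --- spectral Galerkin projection $P_n$ onto $\lin\lbrace E_1,\dots,E_n\rbrace$, the classical finite-dimensional It\^{o} formula for $\Vc^n+M^n$, and a limit passage --- is a legitimate and natural way to recover the statement, and it exploits exactly the structure the paper sets up (the eigenbasis of $A$, so that $P_n$ commutes with $A^{\beta/2}$ and contracts every $X_\beta$, and $P_nM_t=\int_0^t P_n\tilde g_s\,dW_s$ by \cite[Proposition 4.30]{dpz}). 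The treatment of the Stieltjes term is the genuinely delicate point and your splitting handles it correctly: the first piece is killed by the uniform bound $\mathrm{TV}(\Vc^n)\le \mathrm{TV}(\Vc)$ together with uniform continuity of $D\psi$ on balls (assumption 2 of Theorem \ref{thm:ito.lemma}), and the second by the polar decomposition $d\Vc=h\,d|d\Vc|$ plus dominated convergence.

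A few details that you should make explicit to close the argument. First, $\Vc$ must be taken continuous (and adapted); this is implicit in the jump-free form of the formula and in Pardoux's statement, and you use it twice --- in invoking the It\^{o} formula for \emph{continuous} semimartingales, and in asserting $\sup_{s}\Vert P_n\Vc_s-\Vc_s\Vert_{X_{\alpha+1}}\to 0$, which follows because the range of a continuous (indeed of any regulated BV) path is relatively compact and $P_n\to I$ uniformly on compacta, not merely pointwise. Second, the existence of the density $h$ with $\Vert h_s\Vert_{X_{\alpha+1}}=1$ $|d\Vc|$-a.e.\ requires the Radon--Nikod\'{y}m property of $X_{\alpha+1}$; this holds since $X_{\alpha+1}$ is a (separable) Hilbert space, but it is a nontrivial ingredient and should be cited. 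Third, the convergence of the stochastic integrals via \eqref{eq:bdg} needs the same localization on the events $\Omega_R$ where all trajectories stay in a ball of radius $R$ in $X_{\alpha+1}$ that the paper uses in the proof of Theorem \ref{thm:ito.lemma}, since $D\psi$ is only bounded on balls. With these points spelled out, your proof is complete.
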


\begin{proof}[Proof of Theorem \ref{thm:ito.lemma}]
\emph{Step 1}. First, let us prove the claim with the additional assumption
\[
	\Eb \int_0^T \mathds{1}_{[0, \tau]}(s) \Vert g_s \Vert_{L_2(\Uc, X_{\alpha+2})}^2 < \infty.
\]
Let $M_t = \int_0^t \mathds{1}_{[0, \tau]}(s) g_s \, dW_s$ for $t \in [0, T]$, then we have $M \in M^2 \left( 0, T; X_{\alpha+2} \right)$. Defining $\tilde{U}_t = U_{t \wedge \tau} - M_t$ for $t \in [0, T]$ we observe that
\[
	\tfrac{d}{ds} \tilde{U}_s = \mathds{1}_{[0, \tau]}(s) \left( AU_s + v_s \right) \ \Pb\text{-a.s.\ for a.a.} \ s \in (0, t).
\]
We infer that
\[
	\tilde{U} \in M^2\left(0, T; X_{\alpha+2} \right) \quad \text{and} \quad \tfrac{d}{dt} \tilde{U} \in M^2 \left( 0, T; X_{\alpha} \right).
\]
Let $\tilde{U}^n \in M^2 \left( 0, T; X_{\alpha+2} \right)$ be such that $\tilde{U}^n \in C^1\left( [0, T], X_\alpha \right)$ $\Pb$-almost surely, $\tilde{U}^n_0 = U_0$ and
\begin{equation}
	\label{eq:ito.convergence}
	\tilde{U}^n \to \tilde{U} \ \text{in} \ M^2\left( 0, T; X_{\alpha+2} \right), \quad \tfrac{d}{dt} \tilde{U}^n =: \tilde{v}^n\to \mathds{1}_{[0, \tau]}\left(AU + v\right) \ \text{in} \ M^2\left(0, T; X_{\alpha} \right).
\end{equation}
In particular, $\tilde{U}^n$ have bounded variation. Such a sequence can be constructed by defining $\tilde{U}^n = \Kc_{1/n} \tilde{U}$, where $\Kc_{1/n}$ is a convolution operator similar to the one in \eqref{eq:convolution.operator}. Assuming we extend $U|_{(-\infty, 0)} = U_0$, which is justified by the continuity of $\tilde{U}$, the requirement $\tilde{U}^n_0 = U_0$ is satisfied. The resulting processes are measurable since the convolution operator $\Kc_{1/n}$ is continuous on $L^2(0, T; X_{\alpha+2})$ for all $n \in \Nb$. By passing to a (not relabelled) subsequence we may assume that in fact
\begin{equation}
	\label{eq:ito.convergence.as}
	\tilde{U}^n \to \tilde{U} \ \text{in} \ L^2\left( 0, T; X_{\alpha+2} \right) \ \Pb\text{-a.s.}, \quad \tilde{v}^n \to \mathds{1}_{[0, \tau]} \left( AU + v \right) \ \text{in} \ L^2\left(0, T; X_{\alpha} \right) \ \Pb\text{-a.s.}
\end{equation}
Moreover from the Lions-Magenes Lemma, see e.g.\ \cite[Lemma 3.1.2]{temam1979}, and the dominated convergence theoremwe deduce that 
\begin{equation}
	\label{eq:ito.continuous.convergence}
	\tilde{U}^n \to \tilde{U} \quad \text{in} \quad L^2 \left( \Omega,  C\left( [0, T], X_{\alpha+1} \right) \right).
\end{equation}
Furthermore, by choosing a suitable (again not relabelled) subsequence we can assume that the convergence in \eqref{eq:ito.continuous.convergence} is sufficiently fast so that
\begin{equation}
	\label{eq:ito.continuous.convergence.2}
	\sum_{n = 1}^\infty \Eb \sup_{t \in [0, T]} \Vert \tilde{U}^n_t - \tilde{U}_t \Vert_{\alpha+1}^2 < \infty.
\end{equation}
This in turn implies that 
\begin{equation}
	\label{eq:ito.continuous.convergence.as}
	\tilde{U}^n \to \tilde{U} \quad \Pb\text{-a.s.}\ \text{in} \ C([0, T], X_{\alpha+1}).
\end{equation}
For the later use let $\tilde{\Omega} \subseteq \Omega$ be the set of full-measure on which the convergences \eqref{eq:ito.convergence.as} and \eqref{eq:ito.continuous.convergence.as} hold. By the It\^{o} Lemma for processes with bounded variation, see Lemma \ref{lemma:ito.pardoux},  recalling the definition of $M$, we  get $\Pb$-almost surely for all $t \in [0, T]$
\begin{multline}
	\label{eq:ito.U.approximation.1}
	\psi\left( \tilde{U}^n_t + M_t \right) = \psi \left( U_0 \right) + \int_0^t D\psi\left( \tilde{U}^n_s + M_s \right) \left(\tilde{v}^n_s \right) \, ds + \int_0^{t \wedge \tau} D\psi\left(\tilde{U}^n_s + M_s \right) \left( g_s \, dW_s \right)\\
	+ \frac12 \sum_{k = 1}^\infty \int_0^{t \wedge \tau} D^2 \psi\left( \tilde{U}^n_s + M_s \right)\left(g_s e_k, g_s e_k \right) \, ds.
\end{multline}

It remains to pass to the limit w.r.t.\ $n \to \infty$ in \eqref{eq:ito.U.approximation.1}. Assume that $t \leq \tau(\omega)$ for some $\omega \in \tilde{\Omega}$; the case $t > \tau(\omega)$ being similar. Then
\begin{multline*}
	\left\vert \int_0^t D\psi\left( \tilde{U}^n_s + M_s \right)\left( \tilde{v}^n_s \right) - D\psi\left( \tilde{U}_s + M_s \right)\left( AU_s + v_s \right) \, ds \right\vert\\
	\leq \left\vert \int_0^t D\psi\left( \tilde{U}^n_s + M_s \right) \left( \tilde{v}^n_s - AU_s - v_s \right) \, ds \right\vert\\
	+ \left\vert \int_0^t \left[ D\psi\left( \tilde{U}^n_s + M_s \right) - D\psi\left( \tilde{U}_s + M_s \right) \right] \left( AU_s + v_s \right) \, ds \right\vert.
\end{multline*}
The first term converges to $0$ by the convergence of $v^n$ \eqref{eq:ito.convergence.as} localized on $(0, \tau(\omega))$ and by \eqref{eq:ito.D.psi.bound}. The second term converges to $0$ by the boundedness of $\tilde{U}^n$ in $ C\left( [0, \tau(\omega)], X_{\alpha+1} \right) \cap L^2\left( 0, \tau(\omega); X_{\alpha+2} \right)$ and \eqref{eq:ito.D.psi.convergence}.

Regarding the convergence of the stochastic term, let
\[
	\Omega = \bigcup_{R \in \Nb} \left\lbrace \tilde{U}^n,  \tilde{U}, M \in B_R\left( C\left( [0, T], X_{\alpha+1} \right) \cap L^2\left(0, T; X_{\alpha+2} \right) \right) \right\rbrace \equiv \bigcup_{R \in \Nb} \Omega_R,
\]
where $B_R(Y)$ denotes a ball of radius $R$ in a Banach space $Y$. Then for $R \in \Nb$ arbitrary we use the Fubini Theorem, the Burhkolder-Davis-Gundy inequality \eqref{eq:bdg}, the ideal property of Hilbert-Schmidt operators, the Lipschitz continuity of $D\psi$ on balls in $X_{\alpha+1}$, i.e.\ the assumption \ref{item:ito.assumption2}, and finally the convergence property \eqref{eq:ito.continuous.convergence.2} to deduce
\begin{align*}
	\Eb &\sum_{n= 1}^\infty \sup_{t \in [0, T]} \mathds{1}_{\Omega_R} \left| \int_0^{t} \left[ D\psi\left( \tilde{U}^n_s + M_s \right) - D\psi\left( \tilde{U}_s + M_s \right) \right] \left( \mathds{1}_{[0, \tau]}(s) g_s \, dW_s \right) \right|\\
	&\leq C \sum_{n=1}^\infty \Eb \left( \mathds{1}_{\Omega_R} \int_0^\tau \Vert D\psi\left( \tilde{U}^n_s + M_s \right) - D\psi\left( \tilde{U}_s + M_s \right) \Vert_{L\left(X_{\alpha+1}, \Rb\right)}^2 \Vert g_s \Vert_{L_2\left(\Uc, X_{\alpha+1}\right)}^2 \, ds \right)^{1/2}\\
	&\leq C_R \Eb \left[ \int_0^\tau \Vert g_s \Vert_{L_2\left(\Uc, X_{\alpha+1}\right)}^2 \, ds \right] \sum_{n = 1}^\infty \Eb \left[ \Vert \tilde{U}^n - \tilde{U} \Vert_{C\left([0, T], X_{\alpha+1}\right)}^2 \right] < \infty.
\end{align*}
This implies that
\[
	\sup_{t \in [0, \tau]} \left| \int_0^{t} \left[ D\psi\left( \tilde{U}^n_s + M_s \right) - D\psi\left( \tilde{U}_s + M_s \right) \right] \left( \mathds{1}_{[0, \tau]}(s) g_s \, dW_s \right) \right| \to 0 \quad \Pb\text{-a.s.\ on} \ \Omega_R.
\]
Since $R \in \Nb$ was arbitrary, we get the desired $\Pb$-a.s.\ convergence.

Finally, let $\omega \in \tilde{\Omega}$ and $t \leq \tau(\omega)$.
The convergence \eqref{eq:ito.continuous.convergence.as} and the uniform continuity of $D^2\psi$ on balls in $X_{\alpha+1}$ imply
\begin{multline*}
	\left| \int_0^t \sum_{k=1}^\infty \left[ D^2\psi\left( \tilde{U}^n_s + M_s \right) - D^2\psi\left( \tilde{U}_s + M_s \right) \right] \left( g_s e_k, g_s e_k \right) \, ds \right|\\
	\leq \sup_{s \in [0, t]} \left\Vert D^2\psi\left( \tilde{U}^n_s + M_s \right) - D^2\psi\left( \tilde{U}_s + M_s \right) \right\Vert_{L\left( X_{\alpha+1} \times X_{\alpha+1}, \Rb \right)} \int_0^\tau \Vert g_s \Vert_{L_2\left( \Uc, X_{\alpha+1} \right)}^2 \, ds \to 0
\end{multline*}
on $\tilde{\Omega}$. We are therefore able to pass to the limit in \eqref{eq:ito.U.approximation.1} and obtain \eqref{eq:ito.claim}.

\emph{Step 2}. To prove the general case, let $g^n \in M^2\left( 0, T; L_2\left( \Uc, X_{\alpha+2} \right) \right)$ be such that $g^n \to \mathds{1}_{[0, \tau]} g$ in $M^2 \left(0, T; L_2\left( \Uc, X_{\alpha+1} \right) \right)$ and
\begin{equation}
	\label{eq:ito.g.convergence}
	\sum_{n=1}^\infty \Eb \int_0^T \Vert g^n_s - \mathds{1}_{[0, \tau]}(s) g_s \Vert_{L_2\left( \Uc, X_{\alpha+1} \right)}^2 \, ds < \infty.
\end{equation}
Let $U^n$ be the solution of the equation in the space $X_{\alpha}$
\begin{equation}
	\label{eq:ito.sde.approximation}
	U^n_t + \int_0^t \mathds{1}_{[0, \tau]}(s) \left( AU^n_s + v_s \right) \, ds = \int_0^t g^n_s \, dW_s, \quad t \in [0, T], \qquad U^n_0 = U_0.
\end{equation}
Following a standard argument using the Burkholder-Davis-Gundy inequality \eqref{eq:bdg} and the It\^{o} Lemma from e.g.\  \cite[Theorem 1.2]{pardoux1979} we may show that
\[
	U^n \in L^2 \left( \Omega, C\left( [0, T], X_{\alpha+1} \right) \right), \qquad \mathds{1}_{[0, \tau]} U^n \in L^2\left( \Omega, L^2\left( 0, T; X_{\alpha+2} \right) \right).
\]
Therefore by Step 1 we have $\Pb$-almost surely
\begin{multline}
	\label{eq:ito.approximation.2}
	\psi\left( U^n_t \right) = \psi\left( U_0 \right) + \int_0^{t \wedge \tau} D\psi\left( U^n_s \right) \left( AU^n_s + v_s \right) \, ds + \int_0^t D\psi\left(U^n_s\right)\left( g^n_s \, dW_s \right)\\
	+ \frac12 \sum_{k=1}^\infty \int_0^t D^2\psi\left( U^n_s \right) \left( g^n_s e_k, g^n_s e_k \right) \, ds, \qquad t \in [0, T].
\end{multline}
Subtracting \eqref{eq:ito.sde.approximation} from \eqref{eq:ito.spde} we may repeating the argument above relying on the Burkholder-Davis-Gundy inequality and the It\^{o} Lemma by from \cite{pardoux1979} to get
\begin{equation}
	\label{eq:ito.second.convergence}
	\begin{gathered}
		U^n \to U( \cdot \wedge \tau) \ \text{in} \ L^2\left( \Omega, C\left( [0, T], X_{\alpha+1} \right) \right),\\
		\mathds{1}_{[0, \tau]} U^n \to \mathds{1}_{[0, \tau]} U \ \text{in} \ L^2\left( \Omega, L^2\left(0, T; X_{\alpha+2} \right) \right).
	\end{gathered}	
\end{equation}
Similarly as in Step 1 we may also use \eqref{eq:ito.g.convergence} and \eqref{eq:ito.second.convergence} to obtain
\[
	U^n \to U( \cdot \wedge \tau) \ \text{in} \ C\left( [0, T], X_{\alpha+1} \right) \ \Pb\text{-a.s.}, \quad \mathds{1}_{[0, \tau]} U^n \to \mathds{1}_{[0, \tau]} U \ \text{in} \ L^2\left(0, T; X_{\alpha+2} \right) \ \Pb\text{-a.s.}
\]
The passage to the limit in \eqref{eq:ito.approximation.2} now follows similarly as in Step 1.
\end{proof}

\addcontentsline{toc}{section}{References}
\bibliography{bibliography}

\begin{thebibliography}{10}

\bibitem{alessandrini2008}
G.~Alessandrini, A.~Morassi, and E.~Rosset.
\newblock The linear constraints in {P}oincar\'{e} and {K}orn type
  inequalities.
\newblock {\em Forum Math.}, 20(3):557--569, 2008.

\bibitem{bensoussan1995}
A.~Bensoussan.
\newblock Stochastic {N}avier-{S}tokes equations.
\newblock {\em Acta Appl. Math.}, 38(3):267--304, 1995.

\bibitem{bfh}
D.~Breit, E.~Feireisl, and M.~Hofmanov\'{a}.
\newblock {\em Stochastically forced compressible fluid flows}, volume~3 of
  {\em De Gruyter Series in Applied and Numerical Mathematics}.
\newblock De Gruyter, Berlin, 2018.

\bibitem{brzezniak1997}
Z.~Brze\'{z}niak.
\newblock On stochastic convolution in {B}anach spaces and applications.
\newblock {\em Stochastics Stochastics Rep.}, 61(3-4):245--295, 1997.

\bibitem{brzezniak1991}
Z.~Brze\'{z}niak, M.~Capi\'{n}ski, and F.~Flandoli.
\newblock Stochastic partial differential equations and turbulence.
\newblock {\em Math. Models Methods Appl. Sci.}, 1(1):41--59, 1991.

\bibitem{brzezniak1992}
Z.~Brze\'{z}niak, M.~Capi\'{n}ski, and F.~Flandoli.
\newblock Stochastic {N}avier-{S}tokes equations with multiplicative noise.
\newblock {\em Stochastic Anal. Appl.}, 10(5):523--532, 1992.

\bibitem{Brz+Motyl-2013}
Z.~Brze\'{z}niak and E.~Motyl.
\newblock Existence of a martingale solution of the stochastic
  {N}avier-{S}tokes equations in unbounded 2{D} and 3{D} domains.
\newblock {\em J. Differential Equations}, 254(4):1627--1685, 2013.

\bibitem{brzezniak2000}
Z.~Brze\'{z}niak and S.~Peszat.
\newblock Maximal inequalities and exponential estimates for stochastic
  convolutions in {B}anach spaces.
\newblock In {\em Stochastic processes, physics and geometry: new interplays,
  {I} ({L}eipzig, 1999)}, volume~28 of {\em CMS Conf. Proc.}, pages 55--64.
  Amer. Math. Soc., Providence, RI, 2000.

\bibitem{brzezniak2001}
Z.~Brze\'{z}niak and S.~Peszat.
\newblock Stochastic two dimensional {E}uler equations.
\newblock {\em Ann. Probab.}, 29(4):1796--1832, 2001.

\bibitem{cao2007}
C.~Cao and E.~S. Titi.
\newblock Global well-posedness of the three-dimensional viscous primitive
  equations of large scale ocean and atmosphere dynamics.
\newblock {\em Ann. of Math. (2)}, 166(1):245--267, 2007.

\bibitem{capinski1991}
M.~Capi\'{n}ski and N.~Cutland.
\newblock Stochastic {N}avier-{S}tokes equations.
\newblock {\em Acta Appl. Math.}, 25(1):59--85, 1991.

\bibitem{capinski1993}
M.~Capi\'{n}ski and N.~J. Cutland.
\newblock Navier-{S}tokes equations with multiplicative noise.
\newblock {\em Nonlinearity}, 6(1):71--78, 1993.

\bibitem{dpz}
G.~Da~Prato and J.~Zabczyk.
\newblock {\em Stochastic equations in infinite dimensions}, volume 152 of {\em
  Encyclopedia of Mathematics and its Applications}.
\newblock Cambridge University Press, Cambridge, second edition, 2014.

\bibitem{debussche2011}
A.~Debussche, N.~Glatt-Holtz, and R.~Temam.
\newblock Local martingale and pathwise solutions for an abstract fluids model.
\newblock {\em Phys.~D}, 240(14-15):1123--1144, 2011.

\bibitem{debussche2012}
A.~Debussche, N.~Glatt-Holtz, R.~Temam, and M.~Ziane.
\newblock Global existence and regularity for the 3{D} stochastic primitive
  equations of the ocean and atmosphere with multiplicative white noise.
\newblock {\em Nonlinearity}, 25(7):2093--2118, 2012.

\bibitem{dong2017.1}
Z.~Dong, J.~Zhai, and R.~Zhang.
\newblock Large deviation principles for 3{D} stochastic primitive equations.
\newblock {\em J. Differential Equations}, 263(5):3110--3146, 2017.

\bibitem{dong2017.2}
Z.~Dong and R.~Zhang.
\newblock Markov selection and {$\mathcal{W}$}-strong {F}eller for 3{D}
  stochastic primitive equations.
\newblock {\em Sci. China Math.}, 60(10):1873--1900, 2017.

\bibitem{doob1994}
J.~L. Doob.
\newblock {\em Measure theory}, volume 143 of {\em Graduate Texts in
  Mathematics}.
\newblock Springer-Verlag, New York, 1994.

\bibitem{elworthy1982}
K.~D. Elworthy.
\newblock {\em Stochastic differential equations on manifolds}, volume~70 of
  {\em London Mathematical Society Lecture Note Series}.
\newblock Cambridge University Press, Cambridge-New York, 1982.

\bibitem{flandoli1995}
F.~Flandoli and D.~Gatarek.
\newblock Martingale and stationary solutions for stochastic {N}avier-{S}tokes
  equations.
\newblock {\em Probab. Theory Related Fields}, 102(3):367--391, 1995.

\bibitem{gao2012}
H.~Gao and C.~Sun.
\newblock Well-posedness and large deviations for the stochastic primitive
  equations in two space dimensions.
\newblock {\em Commun. Math. Sci.}, 10(2):575--593, 2012.

\bibitem{giga1985}
Y.~Giga.
\newblock Domains of fractional powers of the {S}tokes operator in {$L_r$}
  spaces.
\newblock {\em Arch. Rational Mech. Anal.}, 89(3):251--265, 1985.

\bibitem{giga2017}
Y.~Giga, M.~Gries, M.~Hieber, A.~Hussein, and T.~Kashiwabara.
\newblock Bounded {$H^\infty$}-calculus for the hydrostatic {S}tokes operator
  on {$L^p$}-spaces and applications.
\newblock {\em Proc. Amer. Math. Soc.}, 145(9):3865--3876, 2017.

\bibitem{glatt-holtz2014}
N.~Glatt-Holtz, I.~Kukavica, V.~Vicol, and M.~Ziane.
\newblock Existence and regularity of invariant measures for the three
  dimensional stochastic primitive equations.
\newblock {\em J. Math. Phys.}, 55(5):051504, 34, 2014.

\bibitem{glatt-holtz2011}
N.~Glatt-Holtz and R.~Temam.
\newblock Pathwise solutions of the 2-{D} stochastic primitive equations.
\newblock {\em Appl. Math. Optim.}, 63(3):401--433, 2011.

\bibitem{glatt-holtz17}
N.~Glatt-Holtz, R.~Temam, and C.~Wang.
\newblock Time discrete approximation of weak solutions to stochastic equations
  of geophysical fluid dynamics and applications.
\newblock {\em Chin. Ann. Math. Ser. B}, 38(2):425--472, 2017.

\bibitem{glatt-holtz2009}
N.~Glatt-Holtz and M.~Ziane.
\newblock Strong pathwise solutions of the stochastic {N}avier-{S}tokes system.
\newblock {\em Adv. Differential Equations}, 14(5-6):567--600, 2009.

\bibitem{guo2009}
B.~Guo and D.~Huang.
\newblock On the 3{D} viscous primitive equations of the large-scale
  atmosphere.
\newblock {\em Acta Math. Sci. Ser. B (Engl. Ed.)}, 29(4):846--866, 2009.

\bibitem{gyongy1996}
I.~Gy\"{o}ngy and N.~Krylov.
\newblock Existence of strong solutions for {I}t\^{o}'s stochastic equations
  via approximations.
\newblock {\em Probab. Theory Related Fields}, 105(2):143--158, 1996.

\bibitem{hieber2016}
M.~Hieber and T.~Kashiwabara.
\newblock Global strong well-posedness of the three dimensional primitive
  equations in {$L^p$}-spaces.
\newblock {\em Arch. Ration. Mech. Anal.}, 221(3):1077--1115, 2016.

\bibitem{hussein2019}
A.~Hussein, M.~Saal, and M.~Wrona.
\newblock Primitive equations with horizontal viscosity: The initial value and
  the time-periodic problem for physical boundary conditions, 2019.
\newblock arXiv:1902.03186.

\bibitem{hytonen2016}
T.~Hyt\"{o}nen, J.~van Neerven, M.~Veraar, and L.~Weis.
\newblock {\em Analysis in {B}anach spaces. {V}ol. {I}. {M}artingales and
  {L}ittlewood-{P}aley theory}, volume~63 of {\em Ergebnisse der Mathematik und
  ihrer Grenzgebiete. 3. Folge. A Series of Modern Surveys in Mathematics
  [Results in Mathematics and Related Areas. 3rd Series. A Series of Modern
  Surveys in Mathematics]}.
\newblock Springer, Cham, 2016.

\bibitem{jacod1979}
J.~Jacod.
\newblock {\em Calcul stochastique et probl\`emes de martingales}, volume 714
  of {\em Lecture Notes in Mathematics}.
\newblock Springer, Berlin, 1979.

\bibitem{karatzas1991}
I.~Karatzas and S.~E. Shreve.
\newblock {\em Brownian motion and stochastic calculus}, volume 113 of {\em
  Graduate Texts in Mathematics}.
\newblock Springer-Verlag, New York, second edition, 1991.

\bibitem{kato}
T.~Kato.
\newblock {\em Perturbation theory for linear operators}.
\newblock Classics in Mathematics. Springer-Verlag, Berlin, 1995.
\newblock Reprint of the 1980 edition.

\bibitem{kobelkov2007}
G.~M. Kobelkov.
\newblock Existence of a solution ``in the large'' for ocean dynamics
  equations.
\newblock {\em J. Math. Fluid Mech.}, 9(4):588--610, 2007.

\bibitem{kukavica2007}
I.~Kukavica and M.~Ziane.
\newblock On the regularity of the primitive equations of the ocean.
\newblock {\em Nonlinearity}, 20(12):2739--2753, 2007.

\bibitem{li2018}
J.~Li and E.~S. Titi.
\newblock Recent advances concerning certain class of geophysical flows.
\newblock In {\em Handbook of mathematical analysis in mechanics of viscous
  fluids}, pages 933--971. Springer, Cham, 2018.

\bibitem{lions1972}
J.-L. Lions and E.~Magenes.
\newblock {\em Non-homogeneous boundary value problems and applications. {V}ol.
  {I}}.
\newblock Springer-Verlag, New York-Heidelberg, 1972.
\newblock Translated from the French by P. Kenneth, Die Grundlehren der
  mathematischen Wissenschaften, Band 181.

\bibitem{lions1992a}
J.-L. Lions, R.~Temam, and S.~H. Wang.
\newblock New formulations of the primitive equations of atmosphere and
  applications.
\newblock {\em Nonlinearity}, 5(2):237--288, 1992.

\bibitem{lions1992b}
J.-L. Lions, R.~Temam, and S.~H. Wang.
\newblock On the equations of the large-scale ocean.
\newblock {\em Nonlinearity}, 5(5):1007--1053, 1992.

\bibitem{mikulevicius2001}
R.~Mikulevicius and B.~Rozovskii.
\newblock Stochastic {N}avier-{S}tokes equations. {P}ropagation of chaos and
  statistical moments.
\newblock In {\em Optimal control and partial differential equations}, pages
  258--267. IOS, Amsterdam, 2001.

\bibitem{mikulevicius2004}
R.~Mikulevicius and B.~L. Rozovskii.
\newblock Stochastic {N}avier-{S}tokes equations for turbulent flows.
\newblock {\em SIAM J. Math. Anal.}, 35(5):1250--1310, 2004.

\bibitem{mikulevicius2005}
R.~Mikulevicius and B.~L. Rozovskii.
\newblock Global {$L_2$}-solutions of stochastic {N}avier-{S}tokes equations.
\newblock {\em Ann. Probab.}, 33(1):137--176, 2005.

\bibitem{pardoux1979}
E.~Pardoux.
\newblock Stochastic partial differential equations and filtering of diffusion
  processes.
\newblock {\em Stochastics}, 3(2):127--167, 1979.

\bibitem{pedlosky}
J.~Pedlosky.
\newblock {\em Geophysical Fluid Dynamics}.
\newblock Springer study edition. Springer New York, 1992.

\bibitem{petcu2009}
M.~Petcu, R.~M. Temam, and M.~Ziane.
\newblock Some mathematical problems in geophysical fluid dynamics.
\newblock In {\em Handbook of numerical analysis. {V}ol. {XIV}. {S}pecial
  volume: computational methods for the atmosphere and the oceans}, volume~14
  of {\em Handb. Numer. Anal.}, pages 577--750. Elsevier/North-Holland,
  Amsterdam, 2009.

\bibitem{simon1990}
J.~Simon.
\newblock Sobolev, {B}esov and {N}ikolskii fractional spaces: imbeddings and
  comparisons for vector valued spaces on an interval.
\newblock {\em Ann. Mat. Pura Appl. (4)}, 157:117--148, 1990.

\bibitem{temam1979}
R.~Temam.
\newblock {\em Navier-{S}tokes equations}, volume~2 of {\em Studies in
  Mathematics and its Applications}.
\newblock North-Holland Publishing Co., Amsterdam-New York, revised edition,
  1979.
\newblock Theory and numerical analysis, With an appendix by F. Thomasset.

\bibitem{vallis}
G.~K. Vallis.
\newblock {\em Atmospheric and Oceanic Fluid Dynamics}.
\newblock Cambridge University Press, 2017.

\end{thebibliography}
\bibliographystyle{plain}

\end{document}